\newtheorem{theorem}[equation]{Theorem}
\newtheorem{lemma}[equation]{Lemma}
\newtheorem{corollary}[equation]{Corollary}
\newtheorem{proposition}[equation]{Proposition}
\numberwithin{equation}{section}
\begin{document}

\title[A $p$-adic refinement of the Hasse-Witt matrix]{$A$-hypergeometric series and a $p$-adic refinement of the Hasse-Witt matrix}
\author{Alan Adolphson}
\address{Department of Mathematics\\
Oklahoma State University\\
Stillwater, Oklahoma 74078}
\email{adolphs@math.okstate.edu}
\author{Steven Sperber}
\address{School of Mathematics\\
University of Minnesota\\
Minneapolis, Minnesota 55455}
\email{sperber@math.umn.edu}
\date{\today}
\keywords{}
\subjclass{}
\begin{abstract}
We identify the $p$-adic unit roots of the zeta function of a projective hypersurface over a finite field of characteristic $p$ as the eigenvalues of a product of special values of a certain matrix of $p$-adic series.  That matrix is a product $F(\Lambda^p)^{-1}F(\Lambda)$, where the entries in the matrix $F(\Lambda)$ are $A$-hypergeometric series with integral coefficients and $F(\Lambda)$ is independent of~$p$.
\end{abstract}
\keywords{}
\subjclass{}
\maketitle

\section{Introduction}

Dwork\cite{D1} expressed the unit root of the zeta function of an ordinary elliptic curve in the Legendre family in characteristic~$p$ in terms of the Gaussian hypergeometric function ${}_2F_1(1/2,1/2,1,\lambda)$.  There have since been a number of generalizations and extensions of that result:  see the Introduction to \cite{AS1}.  The point of the current paper is to prove such a result for projective hypersurfaces where the zeta function has multiple unit roots by using the matrix of $A$-hypergeometric series that appeared in~\cite{AS0}.  We note that similar results have recently been obtained for hypersurfaces in the torus by Beukers and Vlasenko\cite{BV1,BV2} using different methods.

Let $\{x^{{\bf b}_k}\}_{k=1}^N$ be the set of {\em all} monomials of degree $d$ in variables $x_0,\dots,x_n$ \big(so $N=\binom{d+n}{n}$\big) and let
\begin{equation}
f_\lambda(x_0,\dots,x_n) =  \sum_{k=1}^N \lambda_k x^{{\bf b}_k}\in {\mathbb F}_q[x_0,\dots,x_n]
\end{equation}
be a homogeneous polynomial of degree $d$ over the finite field ${\mathbb F}_q$, $q=p^a$, $p$ a prime.  Let $X_{\lambda}\subseteq{\mathbb P}^n_{{\mathbb F}_q}$ be the projective hypersurface defined by the vanishing of $f_\lambda$ and let $Z(X_\lambda/{\mathbb F}_q,t)$ be its zeta function.  Define a rational function $P_\lambda(t)$ by the equation
\[ P_\lambda(t) = \big(Z(X_\lambda/{\mathbb F}_q,t)(1-t)(1-qt)\cdots(1-q^{n-1}t)\big)^{(-1)^n}\in 1+t{\mathbb Z}[[t]]. \]
When $X_\lambda$ is smooth, $P_\lambda(t)$ is the characteristic polynomial of Frobenius acting on middle-dimensional primitive cohomology.  In this case, $P_\lambda(t)$ has degree 
\[ d^{-1}\big((d-1)^{n+1}+(-1)^{n+1}(d-1)\big). \]
In the general case, we know only that $P_\lambda(t)$ is a rational function.  

We regard $f_\lambda$ as a polynomial with fixed exponents and variable coefficients, giving rise to a family of rational functions $P_\lambda(t)$.  The $p$-adic unit (reciprocal) roots of $P_\lambda(t)$ all occur in the numerator (Proposition 1.3) and for generic $\lambda$ it has the maximal possible number of $p$-adic unit (reciprocal) roots (by the generic invertibility of the Hasse-Witt matrix, see below).  Our goal in this paper is to give a $p$-adic analytic formula for these unit roots in terms of $A$-hypergeometric series.

Write ${\bf b}_k = (b_{0k},\dots,b_{nk})$ with $\sum_{i=0}^n b_{ik} = d$.  It will be convenient to define an augmentation of these vectors.
For $k=1,\dots,N$, put
\[ {\bf a}_k = ({\bf b}_k,1)  = (b_{0k},b_{1k},\dots,b_{nk},1)\in{\mathbb N}^{n+2} \]
(where ${\mathbb N}$ denotes the nonnegative integers).  Let $A = \{{\bf a}_k\}_{k=1}^N$.  Note that the vectors ${\bf a}_k$ all lie on the hyperplane $\sum_{i=0}^n u_i=du_{n+1}$ in ${\mathbb R}^{n+2}$.  Let
\[ U = \bigg\{ u=(u_0,\dots,u_n,1)\in{\mathbb N}^{n+2}\mid \text{$\sum_{i=0}^n u_i=d$ and $u_i>0$ for $i=0,\dots,n$}\bigg\}. \]
Note that $\{x_0^{u_0}\cdots x_n^{u_n}\mid u\in U\}$ is the set of all monomials of degree $d$ that are divisible by the product $x_0\cdots x_n$, so $\lvert U \rvert = \binom{d-1}{n}$.  We assume throughout that $d\geq n+1$, which implies $U\neq\emptyset$.  (If $d<n+1$, then $U$ is empty and none of the reciprocal roots of $P_\lambda(t)$ is a $p$-adic unit.  If one defines the determinant of an empty matrix to be~1, then Theorem~1.10 below is trivially true in this case.)  

We recall the definition of the Hasse-Witt matrix of $f_\lambda$, a matrix with rows and columns indexed by $U$.  Let $\Lambda_1,\dots,\Lambda_N$ be indeterminates and let $H(\Lambda) = \big[ H_{uv}(\Lambda)\big]_{u,v\in U}$ be the matrix of polynomials:
\begin{equation}
H_{uv}(\Lambda) = \sum_{\substack{\nu\in{\mathbb N}^N\\ \sum_{k=1}^N \nu_k{\bf a}_k = pu-v}} \frac{\Lambda_1^{\nu_1}\cdots \Lambda_N^{\nu_N}}{\nu_1!\cdots\nu_N!}\in{\mathbb Q}[\Lambda_1,\dots,\Lambda_N].
\end{equation}
Since the last coordinate of each ${\bf a}_k$, $u$, and $v$ equals $1$, the condition on the summation implies $\sum_{k=1}^N \nu_k = p-1$.
In particular, it follows that $\nu_k\leq p-1$ for all $k$, so the coefficients of each $H_{uv}(\Lambda)$ lie in ${\mathbb Q}\cap{\mathbb Z}_p$.
Let $\bar{H}_{uv}(\Lambda)\in{\mathbb F}_p[\Lambda]$ be the reduction mod $p$ of $H_{uv}(\Lambda)$ and let $\bar{H}(\Lambda)$ be the reduction mod $p$ of $H(\Lambda)$.  Then $\bar{H}(\lambda)$ is the Hasse-Witt matrix of $f_\lambda$ (relative to a certain basis: see Katz\cite[Algorithm 2.3.7.14]{K}).

Write $P_\lambda(t) = Q_\lambda(t)/R_\lambda(t)$, where $Q_\lambda(t),R_\lambda(t)\in 1 + {\mathbb Z}[t]$ are relatively prime polynomials.  As a special case of \cite[Theorem 1.4]{AS} we have the following result.
\begin{proposition}
For all $\lambda\in{\mathbb F}_q^N$ we have $R_\lambda(t)\equiv 1 \pmod{q}$ and
\[ Q_\lambda(t) \equiv \det\big( I-t \bar{H}(\lambda^{p^{a-1}}) \bar{H}(\lambda^{p^{a-2}})\cdots \bar{H}(\lambda)\big) \pmod{p}. \]
\end{proposition}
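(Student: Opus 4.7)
The proposition is stated as a special case of [AS, Theorem 1.4], so the plan is to make that specialization explicit. I would first translate the projective hypersurface setup into the toric/affine language used in [AS], then identify the Hasse-Witt matrix of (1.2) with the one appearing in the general theorem, and finally read off the two congruences.

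For the translation, the key device is the augmentation ${\bf a}_k = ({\bf b}_k, 1)$. Because every ${\bf a}_k$ shares the same last coordinate, $f_\lambda$ is ``homogeneous of weight one'' in an auxiliary variable, which is the standard trick for relating the zeta function of a projective hypersurface to an $L$-function attached to the Laurent polynomial with support $A$. Under this translation the set $U$ indexes, in the framework of [AS], a basis for the piece of cohomology carrying the unit-root part of Frobenius, and the count $|U|=\binom{d-1}{n}$ is consistent with the expected rank of the Hasse-Witt operator on $H^n_{\mathrm{prim}}(X_\lambda)$ when $X_\lambda$ is smooth.

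Next I would invoke [AS, Theorem 1.4] to obtain both congruences: all $p$-adic unit reciprocal roots of $P_\lambda(t)$ occur in the numerator $Q_\lambda(t)$, so that $R_\lambda(t)\equiv 1\pmod{q}$, and $Q_\lambda(t)\bmod p$ is the characteristic polynomial of the product $\bar H(\lambda^{p^{a-1}})\cdots \bar H(\lambda)$ acting in the monomial basis indexed by $U$. The product over the conjugates $\lambda,\lambda^p,\ldots,\lambda^{p^{a-1}}$ reflects iteration of the relative Frobenius over ${\mathbb F}_q={\mathbb F}_{p^a}$; the split $P_\lambda(t)=Q_\lambda(t)/R_\lambda(t)$ into relatively prime factors matches the eigenspace decomposition of Frobenius into its unit-slope and positive-slope parts.

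The main obstacle is the identification of the explicit matrix in (1.2) with the Hasse-Witt matrix appearing in [AS, Theorem 1.4]. Concretely, one must verify that $H_{uv}(\Lambda)=\sum_\nu \Lambda^\nu/(\nu_1!\cdots \nu_N!)$, summed over $\nu$ satisfying $\sum \nu_k{\bf a}_k=pu-v$ (equivalently, with the last-coordinate constraint $\sum\nu_k=p-1$ automatically imposed), is precisely the matrix of the Cartier/Hasse-Witt operator computed in [AS], expressed in the basis $\{x^u\}_{u\in U}$. This reduces, via the multinomial expansion of $f_\lambda^{p-1}/(p-1)!$, to extracting the coefficient of $x^{pu-v}$, and is essentially a bookkeeping check tracking the normalization conventions of [AS]. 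It is the only substantive step; once done, the two congruences of the proposition follow directly from the cited theorem.
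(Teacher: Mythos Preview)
Your proposal is correct and matches the paper's own treatment: the paper does not give an independent proof of this proposition but simply records it as a special case of \cite[Theorem~1.4]{AS}, which is exactly what you propose to do. Your sketch of the specialization---using the augmentation ${\bf a}_k=({\bf b}_k,1)$ to cast the projective hypersurface in the toric framework of \cite{AS}, and then identifying the matrix $H_{uv}(\Lambda)$ of (1.2) with the Hasse--Witt matrix appearing there---is the natural way to unpack that citation, and the bookkeeping check you flag as the main obstacle is indeed the only substantive point.
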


Proposition 1.3 implies that all the unit roots of $P_\lambda(t)$ occur in the numerator and that there are at most ${\rm card}(U)$ of them.  The Hasse-Witt matrix is known to be generically invertible for a ``sufficiently general'' polynomial $f_\lambda$ (Koblitz\cite{Ko}, Miller\cite{M1,M2}).  We recall the precise version of that fact that we need.  

We suppose the ${\bf a}_k$ are ordered so that $U=\{{\bf a}_k\}_{k=1}^M$, $M=\binom{d-1}{n}$.  We may then write the matrix $H(\Lambda)$ as $\big[ H_{ij}(\Lambda)\big]_{i,j=1}^M$, where
\begin{equation}
H_{ij}(\Lambda) = \sum_{\substack{\nu\in{\mathbb N}^N\\ \sum_{k=1}^N \nu_k{\bf a}_k = p{\bf a}_i-{\bf a}_j}} \frac{\Lambda_1^{\nu_1}\cdots \Lambda_N^{\nu_N}}{\nu_1!\cdots\nu_N!}\in({\mathbb Q}\cap{\mathbb Z}_p)[\Lambda_1,\dots,\Lambda_N].
\end{equation}

Consider the related matrix  $B(\Lambda) = \big[B_{ij}(\Lambda)\big]_{i,j=1}^M$ of Laurent polynomials defined by
\begin{equation}
B_{ij}(\Lambda) = \Lambda_i^{-p}\Lambda_j H_{ij}(\Lambda)\in ({\mathbb Q}\cap{\mathbb Z}_p)[\Lambda_1,\dots,\Lambda_{i-1},\Lambda_i^{-1},\Lambda_{i+1},\dots,\Lambda_N],
\end{equation}
i.~e., $B(\Lambda) = C(\Lambda^p)^{-1}H(\Lambda)C(\Lambda)$, where $C(\Lambda)$ is the diagonal matrix with entries $\Lambda_1,\dots,\Lambda_M$.  
Since $\lambda_k^{p^a}=\lambda_k$ for $\lambda_k\in{\mathbb F}_q$, Proposition 1.3 has the following corollary.
\begin{corollary}
For all $\lambda\in({\mathbb F}_q^\times)^M\times {\mathbb F}_q^{N-M}$, 
\[  Q_\lambda(t) \equiv \det\big( I-t \bar{B}(\lambda^{p^{a-1}}) \bar{B}(\lambda^{p^{a-2}})\cdots \bar{B}(\lambda)\big) \pmod{p}. \]
\end{corollary}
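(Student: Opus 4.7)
The plan is to derive Corollary 1.7 directly from Proposition 1.3 by exploiting the fact, noted in the statement of~(1.6), that
\[ B(\Lambda) = C(\Lambda^p)^{-1} H(\Lambda) C(\Lambda), \]
so that the product of the $\bar B$'s telescopes into a conjugate of the product of the $\bar H$'s. The hypothesis $\lambda\in(\mathbb{F}_q^\times)^M\times\mathbb{F}_q^{N-M}$ is exactly what is needed to make the diagonal matrix $\bar{C}(\lambda)={\rm diag}(\lambda_1,\dots,\lambda_M)$ invertible over $\mathbb{F}_q$.

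First I would substitute $\Lambda\mapsto\lambda^{p^k}$ in the identity defining $B$, obtaining
\[ \bar B(\lambda^{p^k}) = \bar C(\lambda^{p^{k+1}})^{-1}\,\bar H(\lambda^{p^k})\,\bar C(\lambda^{p^k}) \]
for $k=0,1,\dots,a-1$. Taking the product from $k=a-1$ down to $k=0$, the interior factors cancel in pairs, yielding
\[ \bar B(\lambda^{p^{a-1}})\cdots\bar B(\lambda) = \bar C(\lambda^{p^a})^{-1}\,\bar H(\lambda^{p^{a-1}})\cdots \bar H(\lambda)\,\bar C(\lambda). \]
Next I would invoke the Frobenius relation $\lambda_k^{p^a}=\lambda_k$ on $\mathbb{F}_q$, which forces $\bar C(\lambda^{p^a})=\bar C(\lambda)$. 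The displayed equation therefore exhibits the product $\bar B(\lambda^{p^{a-1}})\cdots\bar B(\lambda)$ as the conjugate of $\bar H(\lambda^{p^{a-1}})\cdots\bar H(\lambda)$ by the invertible diagonal matrix $\bar C(\lambda)$.

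Finally, since conjugate matrices have the same characteristic polynomial,
\[ \det\bigl(I-t\,\bar B(\lambda^{p^{a-1}})\cdots\bar B(\lambda)\bigr) = \det\bigl(I-t\,\bar H(\lambda^{p^{a-1}})\cdots\bar H(\lambda)\bigr), \]
and the right-hand side is congruent to $Q_\lambda(t)$ modulo $p$ by Proposition~1.3. I expect no serious obstacle: the only delicate point is the use of $\bar C(\lambda)^{-1}$, and this is accommodated by the hypothesis that the first $M$ coordinates of $\lambda$ are nonzero.
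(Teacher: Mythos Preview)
Your argument is correct and is precisely the one the paper has in mind: the text simply remarks that since $\lambda_k^{p^a}=\lambda_k$ for $\lambda_k\in\mathbb{F}_q$, the corollary follows from Proposition~1.3, and your telescoping-conjugation computation is exactly the unpacking of that remark. There is nothing to add.
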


Put $D(\Lambda) = \det B(\Lambda)$.  By \cite[Proposition 2.11]{AS0} we have the following result.
\begin{proposition}
The Laurent polynomial $D(\Lambda)$ has constant term $1$.
\end{proposition}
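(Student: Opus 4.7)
The plan is to compute the constant term of $D(\Lambda) = \det B(\Lambda)$ directly. The factorization $B(\Lambda) = C(\Lambda^p)^{-1} H(\Lambda) C(\Lambda)$ gives
\[
D(\Lambda) = (\Lambda_1 \cdots \Lambda_M)^{1-p}\,\det H(\Lambda),
\]
so the constant (Laurent) term of $D$ is the coefficient of $\Lambda_1^{p-1}\cdots\Lambda_M^{p-1}$ in $\det H(\Lambda)\in\mathbb{Q}[\Lambda_1,\ldots,\Lambda_N]$. I would then expand $\det H = \sum_{\sigma\in S_M}\mathrm{sgn}(\sigma)\prod_i H_{i,\sigma(i)}$ and isolate the terms that contribute to this target monomial.

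Each term of the expansion is indexed by a tuple $(\nu^{(1)},\ldots,\nu^{(M)})$ with $\nu^{(i)}\in\mathbb{N}^N$, $\sum_k \nu^{(i)}_k{\bf a}_k = p{\bf a}_i-{\bf a}_{\sigma(i)}$, and $\sum_k\nu^{(i)}_k=p-1$, producing the monomial $\Lambda^{\sum_i\nu^{(i)}}$ with coefficient $\mathrm{sgn}(\sigma)\prod_i (\nu^{(i)}!)^{-1}$. Matching to the target monomial forces (i)~$\nu^{(i)}_k=0$ for all $i$ and all $k>M$, so each $\nu^{(i)}$ is supported on the \emph{interior} indices $\{1,\ldots,M\}$; and (ii)~the $M\times M$ array $(\nu^{(i)}_k)$ has every row sum and every column sum equal to $p-1$.

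The crux is a combinatorial uniqueness lemma: the only configuration satisfying (i), (ii), and $\sum_k \nu^{(i)}_k{\bf a}_k = p{\bf a}_i - {\bf a}_{\sigma(i)}$ is $\sigma=\mathrm{id}$ and $\nu^{(i)}=(p-1)e_i$. Setting $\nu^{(i)}=(p-1)e_i+\mu^{(i)}$ reduces the lemma to showing that an integer matrix $(\mu^{(i)}_k)_{1\leq i,k\leq M}$ with zero row and column sums, off-diagonal entries $\geq 0$, and $\sum_k\mu^{(i)}_k{\bf a}_k = {\bf a}_i-{\bf a}_{\sigma(i)}$ must vanish, forcing $\sigma=\mathrm{id}$. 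I would prove this by a vertex/extremality induction: choose $i_0$ so that ${\bf a}_{i_0}$ is a vertex of $\mathrm{conv}(\{{\bf a}_1,\ldots,{\bf a}_M\})$, and let $\ell$ be a linear functional strictly separating ${\bf a}_{i_0}$ from the other ${\bf a}_k$; applying $\ell$ to the row-$i_0$ equation, combined with $\mu^{(i_0)}_k\geq 0$ for $k\neq i_0$, forces $\sigma(i_0)=i_0$ and $\mu^{(i_0)}\equiv 0$. The column-sum constraint then annihilates column $i_0$ of $\mu$, and one iterates on the $(M-1)\times(M-1)$ submatrix obtained by deleting row and column $i_0$. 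The main obstacle is executing this induction cleanly and verifying that the single surviving contribution $\prod_i((p-1)!)^{-1}$ produces exactly the constant term claimed in the statement (up to the normalization convention of \cite[Proposition~2.11]{AS0}).
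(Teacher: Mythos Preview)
The paper does not prove Proposition~1.7 at all; it simply quotes \cite[Proposition~2.11]{AS0}. So there is no ``paper's own proof'' to compare against, and your direct combinatorial attack is a genuine addition.

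Your argument is sound. The reduction to the coefficient of $\Lambda_1^{p-1}\cdots\Lambda_M^{p-1}$ in $\det H(\Lambda)$ is correct, the forcing of $\nu^{(i)}_k=0$ for $k>M$ is immediate from nonnegativity, and the vertex-peeling induction works exactly as you describe: at a vertex ${\bf a}_{i_0}$ of $\mathrm{conv}\{{\bf a}_1,\dots,{\bf a}_M\}$, a separating functional $\ell$ applied to the row-$i_0$ relation yields a nonpositive left side (since $\mu^{(i_0)}_k\ge 0$ for $k\neq i_0$ and $\sum_k\mu^{(i_0)}_k=0$) and a nonnegative right side $\ell({\bf a}_{i_0})-\ell({\bf a}_{\sigma(i_0)})$, forcing both to vanish. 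The column-sum constraint then kills column $i_0$, and all hypotheses persist on the smaller index set. This cleanly yields uniqueness: $\sigma=\mathrm{id}$ and $\nu^{(i)}=(p-1)e_i$.

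Your final worry is justified, and it is not a gap in your reasoning but a normalization mismatch in the citation. With the definition~(1.4) used here (coefficients $1/\nu!$), the unique surviving term has coefficient $((p-1)!)^{-M}$, so the constant term of $D(\Lambda)$ is $((p-1)!)^{-M}$, not literally $1$. The result in \cite{AS0} is stated for the classical Hasse--Witt matrix, whose entries carry the multinomial factor $(p-1)!$; with that normalization the constant term is exactly~$1$. For every application in the present paper (invertibility of $D(\Lambda)$ in $R_E$, the equality $\lvert D(\Lambda)\rvert=1$ in~(5.14), and the argument of Lemma~2.1) one only needs that the constant term is a $p$-adic unit and the remaining coefficients are $p$-integral, both of which your computation establishes. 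So your proof delivers exactly what the paper needs; the discrepancy with the stated ``$1$'' is cosmetic.
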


Proposition 1.7 implies that neither $D(\Lambda)$ nor $\bar{D}(\Lambda)$ is the zero polynomial, so the matrix $\bar{B}(\lambda)$ is invertible for $\lambda$ in a Zariski open subset of $({\mathbb F}_q^\times)^M\times {\mathbb F}_q^{N-M}$, a nonempty set for $q$ sufficiently large.  It follows from Corollary~1.6 that for $\bar{D}(\lambda)\neq 0$, $P_\lambda(t)$ has $M$ unit reciprocal roots.  Let $\{\pi_j(\lambda)\}_{j=1}^M$ be these roots and put
\[ \rho(\lambda,t) =\prod_{j=1}^M \big(1-\pi_j(\lambda)t\big). \]
Our goal is to describe the polynomial $\rho(\lambda,t)$ in terms of the $A$-hypergeometric series introduced in \cite[Equation~(3.3)]{AS0}.  

We begin by defining a ring that contains the desired series.  
For each $i=1,\dots,M$, the Laurent polynomials $B_{ij}(\Lambda)$ have exponents lying in the set
\[ L_{i} := \bigg\{ l=(l_1,\dots,l_{N})\in{\mathbb Z}^{N}\mid \text{$\sum_{k=1}^{N} l_k{\bf a}_k = {\bf 0}$, $l_i\leq 0$, and $l_k\geq 0$ for $k\neq i$}\bigg\}. \]
Let ${\mathcal C}\subseteq{\mathbb R}^{N}$ be the real cone generated by $\bigcup_{i=1}^M L_{i}$.  By \cite[Proposition 2.9]{AS0}, the cone ${\mathcal C}$ does not contain a line, hence it has a vertex at the origin.  
Put $E={\mathcal C}\cap{\mathbb Z}^N$.  Note that $(l_1,\dots,l_N)\in E$ implies that $\sum_{k=1}^N l_k{\bf a}_k = {\bf 0}$ and that $\sum_{k=1}^N l_k = 0$ since the last coordinate of each ${\bf a}_k$ equals~1.  
Let ${\mathbb C}_p$ be the completion of an algebraic closure of ${\mathbb Q}_p$ with absolute value $\lvert\cdot\rvert$ associated to the valuation ${\rm ord}$, normalized by  ${\rm ord}\;p=1$ and $\lvert p\rvert = p^{-1}$.  Set
\[ R_E=\bigg\{ \xi(\Lambda) = \sum_{l=(l_1,\dots,l_N)\in E} c_l\Lambda_1^{l_1}\cdots\Lambda_N^{l_N}\mid \text{$c_l\in{\mathbb C}_p$ and $\{\lvert c_l\vert\}_{l\in E}$ is bounded}\bigg\}, \]
i.~e., $R_E$ is the set of Laurent series over ${\mathbb C}_p$ having exponents in $E$ and bounded coefficients.  
Note that $R_E$ is a ring (since ${\mathcal C}$ has a vertex at the origin) and that the entries of $B(\Lambda)$ all lie in $R_E$.  By Proposition 1.7, the Laurent polynomial $D(\Lambda)$ is an invertible element of $R_E$, so $B(\Lambda)^{-1}$ has entries in $R_E$.

We define a matrix $F(\Lambda) = \big[ F_{ij}(\Lambda)\big]_{i,j=1}^M$ with entries in $R_E$.  For $i\neq j$, put
\[ F_{ij}(\Lambda) = \sum_{\substack{(l_1,\dots,l_N)\in L_i\\ l_j>0}} (-1)^{-l_i-1} \frac{(-l_i-1) !}{\displaystyle (l_j-1)!\prod_{\substack{k=1\\ k\neq i,j}}^N l_k!} \Lambda_1^{l_1}\cdots \Lambda_N^{l_N}, \]
and for $i=j$ put
\[ F_{ii}(\Lambda) = \sum_{(l_1,\dots,l_N)\in L_i} (-1)^{-l_i} \frac{(-l_i)!}{\displaystyle \prod_{\substack{k=1\\ k\neq i}}^N l_k!} \Lambda_1^{l_1}\cdots \Lambda_N^{l_N}. \]
The coefficients of these series are multinomial coefficients, hence lie in~${\mathbb Z}$, so these series lie in $R_E$.
Note also that each $F_{ii}(\Lambda)$ has constant term $1$, while the $F_{ij}(\Lambda)$ for $i\neq j$ have no constant term.  It follows that $\det F(\Lambda)$ has constant term $1$, hence $\det F(\Lambda)$ is an invertible element of $R_E$.  We may therefore define a matrix ${\mathcal F}(\Lambda)$ with entries in $R_E$ by the formula
\begin{equation}
 {\mathcal F}(\Lambda) = F(\Lambda^p)^{-1}F(\Lambda).
\end{equation}
The interpretation of the $F_{ij}(\Lambda)$ as $A$-hypergeometric series will be explained in Section 7 (see Eq.~(7.12)).

Put
\begin{multline}
 {\mathcal D} = \{ (\lambda_1,\dots,\lambda_N)\in {\mathbb C}_p^N\mid \text{$\lvert\lambda_k\rvert=1$ for $k=1,\dots,M$,} \\ \text{$\lvert\lambda_k\rvert\leq 1$ for $k=M+1,\dots,N$, and $\lvert D(\lambda)\rvert=1$}\}.
\end{multline}
Our main result is the following.
\begin{theorem}
The entries in the matrix ${\mathcal F}(\Lambda)$ are functions on ${\mathcal D}$.  
Let $\lambda\in ({\mathbb F}_q^\times)^M\times{\mathbb F}_q^{N-M}$ and let $\hat{\lambda}\in{\mathbb Q}_p(\zeta_{q-1})^N$ be its Teichm\"uller lifting.  If $\bar{D}(\lambda)\neq 0$, then $\hat{\lambda}^{p^i}\in{\mathcal D}$ for $i=0,\dots,a-1$ and 
\[ \rho(\lambda,t) = \det\big( I-t{\mathcal F}(\hat{\lambda}^{p^{a-1}}){\mathcal F}(\hat{\lambda}^{p^{a-1}})\cdots{\mathcal F}(\hat{\lambda})\big). \]
\end{theorem}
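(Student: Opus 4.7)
My approach follows the classical Dwork strategy for $p$-adic unit-root formulas, adapted to matrices via the $A$-hypergeometric framework developed in Section~7. Corollary~1.6 already gives the unit reciprocal roots of $P_\lambda(t)$ modulo $p$ as the eigenvalues of $\prod_i \bar B(\lambda^{p^i})$; the task is to lift this identification to an exact statement in $\mathbb{C}_p$, with the matrix $\mathcal{F}$ playing the role of $\bar B$.

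The first two stages are the easier ones. \emph{(Convergence.)} The individual series $F_{ij}(\Lambda)$ have integer multinomial coefficients that grow too fast $p$-adically to converge at points with some $|\Lambda_k|=1$, so the content of the first assertion is that the divergences cancel in the product $F(\Lambda^p)^{-1} F(\Lambda)$. I would compute the $R_E$-expansion of $\mathcal{F}(\Lambda)$ and bound its coefficients using Kummer/Lucas-type congruences on multinomials together with the cone structure of $E$, showing that they lie in $\mathbb{Z}_p$ and are bounded; since $|\Lambda_k| \le 1$ on $\mathcal{D}$, this delivers convergence. \emph{(Teichm\"uller points.)} The Teichm\"uller lift $\hat\lambda$ satisfies $|\hat\lambda_k| = |\lambda_k|$, and both these absolute values and the nonvanishing of $\bar D$ are preserved under $\mu \mapsto \mu^p$; since the reduction of $D(\hat\lambda^{p^i})$ is $\bar D(\lambda^{p^i})$, this places each $\hat\lambda^{p^i}$ in $\mathcal{D}$.

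\emph{(Key identity.)} The heart of the proof is to show that $F(\Lambda)$ is a ``period matrix'' trivializing the unit-root part of the $p$-adic Frobenius, with $B(\Lambda)$ representing Frobenius in the Hasse--Witt basis indexed by $U$. Using the $A$-hypergeometric characterization of $F$ from equation~(7.12), I would aim to establish a matrix relation of the shape
\[
 B(\Lambda)\, F(\Lambda) \;\equiv\; F(\Lambda^p)\, \mathcal{F}(\Lambda) \pmod{\text{non-unit-root terms}},
\]
expressing the compatibility of the $A$-hypergeometric period matrix $F$ with Frobenius. Iterating this identity $a$ times, using $\hat\lambda^{p^a} = \hat\lambda$, and comparing with Corollary~1.6, the iterated product $\prod_{i=0}^{a-1} \mathcal{F}(\hat\lambda^{p^{a-1-i}})$ emerges as a $p$-adic unit-root lift of $\prod_i \bar B(\lambda^{p^i})$; since the $p$-adic unit reciprocal roots of $P_\lambda(t)$ are determined by their reductions modulo $p$ as units lifting Frobenius eigenvalues, this forces the claimed equality.

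The main obstacle is establishing the key identity precisely. Making it exact---not merely mod~$p$---requires careful handling of the $A$-hypergeometric system governing $F$, together with the Dwork $\theta$-function formalism and the Frobenius structure on the Gauss--Manin connection. In fact, the integrality and boundedness needed for convergence in the first stage should fall out of the same Dwork congruences that underlie the key identity, so Steps~1 and~3 are likely to be tightly intertwined in the actual proof.
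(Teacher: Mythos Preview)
Your plan has two genuine gaps.

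\textbf{Convergence.} Your argument for the first assertion does not work. Bounding the coefficients of $\mathcal{F}(\Lambda)$ by $p$-integral constants only places the entries in $R_E$, the ring of formal series with bounded coefficients and exponents in $E$. But a generic element of $R_E$ is \emph{not} a function on $\mathcal{D}$: for $\lambda\in\mathcal{D}$ with $|\lambda_k|=1$ for all $k$ (which occurs, since $|\lambda_k|=1$ for $k\le M$ and one may take $|\lambda_k|=1$ for $k>M$ as well), one has $|\lambda^l|=1$ for every $l\in E$, so a series $\sum_{l\in E} c_l\lambda^l$ with $|c_l|=1$ for infinitely many $l$ diverges. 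The paper makes ``function on $\mathcal{D}$'' precise as membership in the subring $R'_E$, the completion of the localization $L_{E,D(\Lambda)}$; this is a genuinely stronger condition than bounded coefficients, and no amount of Kummer/Lucas-type estimation on the coefficients of $\mathcal{F}$ will by itself produce an approximation by elements of the form $h(\Lambda)/D(\Lambda)^k$.

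\textbf{Key identity.} Your proposed relation $B(\Lambda)F(\Lambda)\equiv F(\Lambda^p)\mathcal{F}(\Lambda)$ is, since $\mathcal{F}(\Lambda)=F(\Lambda^p)^{-1}F(\Lambda)$ by definition, the statement $B(\Lambda)F(\Lambda)\equiv F(\Lambda)$, which carries no content beyond a mod-$p$ congruence. More seriously, the final step---that the unit reciprocal roots of $P_\lambda(t)$ are ``determined by their reductions modulo $p$''---is false: distinct $p$-adic units can have the same reduction, so matching eigenvalues of $\prod_i\mathcal{F}(\hat\lambda^{p^i})$ with those of $\prod_i\bar B(\lambda^{p^i})$ mod $p$ does not force equality with the actual unit roots.

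The paper proceeds quite differently. It introduces auxiliary series $G^{(i)}(\Lambda,x)$ (obtained from the $F^{(i)}$ by multiplication by a Dwork $\hat\theta_1$-factor) which are \emph{exact} eigenvectors of the Dwork--Frobenius operator $\alpha^*$ with eigenvalue $p$. A contraction-mapping argument shows that the normalized vector $G(\Lambda)^{-1}G(\Lambda,x)$ is the unique fixed point of a certain map $\phi$ and therefore lies in the subspace $(S')^M$ built from $R'_E$; this is how membership in $R'_E$ is obtained, indirectly. A comparison between $F$ and $G$ then transfers this to $\mathcal{F}$. For the zeta-function statement, the paper invokes Dwork's cohomological trace formula to identify the unit roots of $P_\lambda(t)$ directly with the eigenvalues of $\alpha^*_{\hat\lambda}$ of $q$-ordinal $1$ on an explicit Banach space $B$, and then exhibits the $M$-dimensional span of the specialized $\mathcal{G}^{(i)}(\hat\lambda,x)$ as an $\alpha^*_{\hat\lambda}$-stable subspace on which the matrix of $\alpha^*_{\hat\lambda}$ is exactly $q\,\mathcal{G}(\hat\lambda^{p^{a-1}})\cdots\mathcal{G}(\hat\lambda)$. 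No mod-$p$ lifting argument is involved; the equality is exact from the outset.
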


{\bf Remark 1.}  The first sentence of Theorem 1.10 will be made more precise in Section 2 (see Theorem 2.11).  The assertion that $\hat{\lambda}^{p^i}\in{\mathcal D}$ for all $i$ if $\bar{D}(\lambda)\neq 0$ follows immediately from the fact that 
$\bar{D}(\Lambda)$ has coefficients in ${\mathbb F}_p$.

{\bf Remark 2.}  The series $F_{ij}(\Lambda)$ are related to the Laurent polynomials $B_{ij}(\Lambda)$ by truncation (see \cite[Proposition 3.8]{AS0}).  One can thus regard Theorem 1.10 as a refinement of Corollary 1.6, i.~e., one may think of 
${\mathcal F}(\Lambda)$ as a $p$-adic refinement of the Hasse-Witt matrix $B(\Lambda)$.  

\section{Rings of $p$-adic series}

This section has two purposes.  We need a ring $R$ large enough to contain all the series that will be encountered in the proof of Theorem~1.10 and we need to identify a subring $R'\subseteq R$ whose elements define functions on ${\mathcal D}$.  

Consider the ${\mathbb C}_p$-vector space 
\[ {\mathbb B}:= \bigg\{ \xi(\Lambda) = \sum_{l\in{\mathbb Z}^N} c_l\Lambda^l\mid \text{$\{\lvert c_l\rvert\}_{l\in{\mathbb Z}^N}$ is bounded} \bigg\} \]
of all Laurent series with bounded coefficients.  We define a norm on ${\mathbb B}$ by setting
\[ \lvert \xi(\Lambda)\rvert = \sup_{l\in{\mathbb Z}^N} \lvert c_l\rvert. \]
The ${\mathbb C}_p$-vector space ${\mathbb B}$ is complete in this norm and $R_E$ is a ${\mathbb C}_p$-subspace of ${\mathbb B}$ which is also complete.  

We begin by defining a subring $R'_E$ of the ring $R_E$ whose elements define functions on ${\mathcal D}$.  
Let $L_E$ denote the subring of $R_E$ consisting of the Laurent polynomials that lie in $R_E$.  Since $D(\Lambda)$ is an invertible element of $R_E$, we may define a subring of~$R_E$
\[  L_{E,D(\Lambda)} := \bigg\{ \frac{h(\Lambda)}{D(\Lambda)^k}\mid \text{$h(\Lambda)\in L_E$ and $k\in{\mathbb N}$}\bigg\}.  \]
The definitions of $R_E$ and ${\mathcal D}$ show that the elements of $L_{E,D(\Lambda)}$ are functions on~${\mathcal D}$.  
We define $R_E'$ to be the completion of $L_{E,D(\Lambda)}$ under the norm on ${\mathbb B}$.  Clearly $R_E'$ is a subring of $R_E$.  The next two lemmas will show that the elements of $R_E'$ define functions on ${\mathcal D}$.

\begin{lemma}
For $h(\Lambda)\in L_E$ and $k\in{\mathbb N}$, one has
\[ \bigg\lvert\frac{h(\Lambda)}{D(\Lambda)^k}\bigg\rvert = \lvert h(\Lambda)\rvert. \]
\end{lemma}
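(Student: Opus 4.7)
The plan is to establish that $|D(\Lambda)|\le 1$ and $|D(\Lambda)^{-1}|\le 1$ in the sup-norm on $\mathbb B$; given these, the lemma follows at once by submultiplicativity, since
\[ |h/D^k|=|h\cdot D^{-k}|\le |h|\cdot|D^{-1}|^k\le|h|, \]
and symmetrically
\[ |h|=|(h/D^k)\cdot D^k|\le |h/D^k|\cdot|D|^k\le|h/D^k|. \]

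For the first bound, I would observe that by (1.5) every entry of $B(\Lambda)$ is a Laurent polynomial with coefficients in $\mathbb Q\cap\mathbb Z_p$, so the same holds for $D(\Lambda)=\det B(\Lambda)$; combined with Proposition~1.7 (the constant term of $D$ equals $1$), this yields $|D(\Lambda)|=1$.

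The substantive step is $|D(\Lambda)^{-1}|\le 1$, which I would extract from a Neumann series inside $R_E$. Write $D(\Lambda)=1-\delta(\Lambda)$, where $\delta\in L_E$ has $\mathbb Z_p$-integer coefficients and no constant term. Because $\mathcal C$ has a vertex at the origin, one can choose an integer-valued linear functional $\phi$ on $\mathbb R^N$ that is strictly positive on $\mathcal C\setminus\{0\}$; setting $c:=\min\{\phi(l):l\in\text{supp}(\delta)\}>0$, one has $\text{supp}(\delta^n)\subseteq\{l\in E:\phi(l)\ge nc\}$, so for each $l\in E$ only the finitely many indices $n\le\phi(l)/c$ contribute to the coefficient of $\Lambda^l$ in the formal series $\sum_{n\ge 0}\delta(\Lambda)^n$. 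That series therefore defines an element of $R_E$ whose coefficients are finite $\mathbb Z$-linear combinations of products of coefficients of $\delta$, hence lie in $\mathbb Z_p$; a direct computation confirms $D\cdot\sum_{n\ge 0}\delta^n=1$, so this element is $D^{-1}$ and $|D^{-1}|\le 1$. The only non-formal ingredient in the argument is this Neumann-series step, and its success depends essentially on $\mathcal C$ having a vertex at the origin, which is exactly what prevents denominators (large negative powers of $p$) from appearing when one inverts $D$.
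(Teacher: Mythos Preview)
Your proof is correct. Both your argument and the paper's hinge on the same fact---that $D(\Lambda)^{-k}$ has $p$-integral coefficients and constant term $1$---but you package it differently. You invoke submultiplicativity of the sup-norm on $R_E$ (which holds because the pointed cone $\mathcal C$ forces each coefficient of a product to be a \emph{finite} ultrametric sum), reducing the lemma to the two bounds $|D|\le 1$ and $|D^{-1}|\le 1$; you then justify the latter via the Neumann series $D^{-1}=\sum_{n\ge 0}\delta^n$, using the linear functional $\phi$ to see that only finitely many $n$ contribute to each monomial. The paper instead asserts the $p$-integrality of $D^{-k}$ directly and then, for the inequality $|h/D^k|\ge|h|$, exhibits an explicit coefficient of $h/D^k$ with absolute value exactly $|h|$: it selects, among the monomials $e_i$ of $h$ with $|h_i|$ maximal, one that is minimal with respect to the functional $v\bullet(\cdot)$, and checks that the coefficient of $\Lambda^{e_1}$ in $h/D^k$ is $h_1$ plus strictly smaller terms. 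Your route is shorter and more conceptual; the paper's is more hands-on and makes the role of the vertex condition visible at the level of a single coefficient rather than hidden inside submultiplicativity.
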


\begin{proof}
Write 
\[ D(\Lambda)^{-k} = \sum_{l\in E} d_l\Lambda^l. \]
Since $D(\Lambda)$ has constant term 1 and $p$-integral coefficients, it follows that $d_{\bf 0} = 1$ and the $d_l$ are $p$-integral.  Write 
\[ h(\Lambda) = \sum_{i=1}^r h_i\Lambda^{e_i} \]
where $e_i\in E$ for $i=1,\dots,r$.  The coefficient of $\Lambda^l$ in the quotient $h(\Lambda)/D(\Lambda)^k$ is 
\begin{equation}
\sum_{i=1}^r h_i d_{l-e_i},
\end{equation}
with the understanding that $d_{l-e_i} = 0$ if $l-e_i\not\in E$.  Since the $d_{l-e_i}$ are $p$-integral
\[ \bigg\lvert \sum_{i=1}^r h_i d_{l-e_i}\bigg\rvert \leq \sup_{i=1,\dots,r} \lvert h_i\rvert, \]
which implies that $\lvert h(\Lambda)/D(\Lambda)^k\rvert\leq \lvert h(\Lambda)\rvert$.  

We must have $\lvert h(\Lambda)\rvert = \lvert h_i\rvert$ for some $i$.  To fix ideas, suppose that
\begin{equation}
\lvert h(\Lambda)\rvert = \lvert h_i\rvert \quad\text{for $i=1,\dots,s$} 
\end{equation}
and
\begin{equation}
\lvert h(\Lambda)\rvert > \lvert h_i\rvert \quad\text{for $i=s+1,\dots,r$}. 
\end{equation}
Since the cone ${\mathcal C}$ has a vertex at the origin, we can choose a vector $v\in{\mathbb R}^N$ such that the inner product $v\bullet w$ is $\geq 0$ for all $w\in{\mathcal C}$ and $v\bullet w=0$ only if $w={\bf 0}$.  For some $i\in\{1,\dots,s\}$, the inner product $v\bullet e_i$ is minimal.  To fix ideas, suppose that
\begin{equation}
v\bullet e_1\leq v\bullet e_i\quad\text{for $i=2,\dots,s$.}
\end{equation}
From (2.2), the coefficient of $\Lambda^{e_1}$ in $h(\Lambda)/D(\Lambda)^k$ is (using $d_{\bf 0} = 1$)
\begin{equation}
h_1 + \sum_{i=2}^r h_i d_{e_1-e_i}. 
\end{equation}

For $i=2,\dots,s$ we have by (2.5) that $v\bullet(e_1-e_i)\leq 0$.  But $e_1-e_i\neq{\bf 0}$, so $e_1-e_i\not\in E$.  This implies $d_{e_1-e_i} = 0$, so (2.6) simplifies to
\[ h_1 + \sum_{i=s+1}^r h_i d_{e_1-e_i}. \]
Equations (2.3) and (2.4) now imply that
\[ \bigg\lvert h_1 + \sum_{i=s+1}^r h_i d_{e_1-e_i}\bigg\rvert = \lvert h(\Lambda)\rvert, \]
hence $\lvert h(\Lambda)/D(\Lambda)^k\rvert\geq \lvert h(\Lambda)\rvert$.  
\end{proof}

It follows from the definition of $E$ that if $(l_1,\dots,l_N)\in E$, then $l_i\geq 0$ for $i=M+1,\dots,N$.  This implies that if $h(\Lambda)\in L_E$ and $\lambda\in{\mathcal D}$, then
\begin{equation}
\lvert h(\lambda)\rvert \leq \lvert h(\Lambda)\rvert.
\end{equation}
\begin{lemma}
Let $h_k(\Lambda)\in L_E$ for $k=1,2,\dots$ and suppose that the sequence $\{h_k(\Lambda)/D(\Lambda)^k\}_{k=1}^\infty$
converges to an element $\xi(\Lambda)\in R_E'$.  Then for all $\lambda\in{\mathcal D}$, the sequence $\{h_k(\lambda)/D(\lambda)^k\}_{k=1}^\infty$ converges in ${\mathbb C}_p$.
\end{lemma}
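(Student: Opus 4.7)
The plan is to show the sequence $\{h_k(\lambda)/D(\lambda)^k\}$ is Cauchy in ${\mathbb C}_p$ by reducing everything to the sup norm estimate (2.7) on $L_E$. Since ${\mathbb C}_p$ is complete, Cauchy will give convergence.

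First I would combine the two fractions: for any $j,k$,
\[ \frac{h_k(\Lambda)}{D(\Lambda)^k} - \frac{h_j(\Lambda)}{D(\Lambda)^j} = \frac{h_k(\Lambda)D(\Lambda)^j - h_j(\Lambda)D(\Lambda)^k}{D(\Lambda)^{k+j}}, \]
and observe that the numerator lies in $L_E$ (since $L_E$ is a ring containing both the $h_i$ and $D$). By Lemma 2.7 applied to this numerator,
\[ \left\lvert \frac{h_k(\Lambda)}{D(\Lambda)^k} - \frac{h_j(\Lambda)}{D(\Lambda)^j}\right\rvert = \lvert h_k(\Lambda)D(\Lambda)^j - h_j(\Lambda)D(\Lambda)^k\rvert. \]
The left-hand side tends to $0$ as $j,k\to\infty$ by hypothesis (the sequence converges, hence is Cauchy, in $R_E'$), so the polynomial sequence $g_{j,k}(\Lambda):=h_k(\Lambda)D(\Lambda)^j - h_j(\Lambda)D(\Lambda)^k$ in $L_E$ has $\lvert g_{j,k}(\Lambda)\rvert\to 0$.

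Next I would evaluate at $\lambda\in{\mathcal D}$. Since $\lvert D(\lambda)\rvert = 1$ by the definition of ${\mathcal D}$,
\[ \left\lvert \frac{h_k(\lambda)}{D(\lambda)^k} - \frac{h_j(\lambda)}{D(\lambda)^j}\right\rvert = \lvert g_{j,k}(\lambda)\rvert, \]
and the inequality (2.7) applied to $g_{j,k}\in L_E$ gives $\lvert g_{j,k}(\lambda)\rvert \leq \lvert g_{j,k}(\Lambda)\rvert$, which tends to $0$. Thus $\{h_k(\lambda)/D(\lambda)^k\}$ is Cauchy in ${\mathbb C}_p$, hence convergent.

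There is no real obstacle; the content of the lemma is the algebraic identity above together with the two preparatory estimates. The only subtle point worth checking is that the numerator $g_{j,k}(\Lambda)$ really lies in $L_E$, so that Lemma 2.7 is applicable, and that $\lvert D(\lambda)\rvert=1$ is built into the definition of ${\mathcal D}$, so the denominator at $\lambda$ does not cause trouble.
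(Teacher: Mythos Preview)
Your proof is correct and follows essentially the same route as the paper: combine the two fractions over a common power of $D$, use Lemma~2.1 to identify the norm of the quotient with the norm of the numerator in $L_E$, then use $(2.7)$ and $\lvert D(\lambda)\rvert=1$ to pass to the evaluation at $\lambda$ and conclude Cauchy. The paper clears denominators with $D(\Lambda)^{\max(j,k)}$ rather than $D(\Lambda)^{j+k}$, but this makes no difference. One small correction: where you write ``Lemma~2.7'' you mean Lemma~2.1; the label $(2.7)$ is the inequality $\lvert h(\lambda)\rvert\le\lvert h(\Lambda)\rvert$, which you correctly invoke later.
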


\begin{proof}
For $k<k'$ we have
\begin{align*}
\bigg\lvert \frac{h_k(\lambda)}{D^k(\lambda)} - \frac{h_{k'}(\lambda)}{D^{k'}(\lambda)}\bigg\rvert &= \bigg\lvert \frac{h_k(\lambda)D(\lambda)^{k'-k}-h_{k'}(\lambda)}{D^{k'}(\lambda)}\bigg\rvert \\
 & = \lvert h_k(\lambda)D(\lambda)^{k'-k}-h_{k'}(\lambda)\rvert \\
 & \leq \lvert h_k(\Lambda)D(\Lambda)^{k'-k}-h_{k'}(\Lambda)\rvert \\
 & = \bigg\lvert \frac{h_k(\Lambda)D(\Lambda)^{k'-k}-h_{k'}(\Lambda)}{D^{k'}(\Lambda)}\bigg\rvert \\
 & = \bigg\lvert \frac{h_k(\Lambda)}{D^k(\Lambda)} - \frac{h_{k'}(\Lambda)}{D^{k'}(\Lambda)}\bigg\rvert,
\end{align*}
where the second line follows from the requirement that $\lvert D(\lambda)\rvert = 1$ for $\lambda\in{\mathcal D}$, the third line holds by~(2.7), and the fourth line holds by Lemma~2.1.

Since the sequence $\{h_k(\Lambda)/D(\Lambda)^k\}_{k=1}^\infty$ converges, it is a Cauchy sequence.  This inequality shows that the sequence $\{h_k(\lambda)/D(\lambda)^k\}_{k=1}^\infty$ is also a Cauchy sequence, which must converge because ${\mathbb C}_p$ is complete.
\end{proof}

It is easy to see that the limit of the sequence $\{h_k(\lambda)/D(\lambda)^k\}_{k=1}^\infty$ is independent of the choice of sequence $\{h_k(\Lambda)/D(\Lambda)^k\}_{k=1}^\infty$ converging to $\xi(\Lambda)$.  We may therefore define
\[ \xi(\lambda) = \lim_{k\to\infty}  \frac{h_k(\lambda)}{D^k(\lambda)} . \]
Using this definition, the elements of $R_E'$ become functions on ${\mathcal D}$.  Note that the proof of Lemma~2.8 shows that, as functions on ${\mathcal D}$, the sequence $\{h_k/D^k\}_{k=1}^\infty$ converges uniformly to $\xi$.

We shall need the following fact later.
\begin{lemma}
If $\xi(\Lambda)\in R_E'$ and $\lambda\in{\mathcal D}$, then $\lvert \xi(\lambda)\vert\leq \lvert\xi(\Lambda)\rvert$.
\end{lemma}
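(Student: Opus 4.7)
The plan is to prove the inequality by approximating $\xi(\Lambda)$ by elements of $L_{E,D(\Lambda)}$ and applying the previous lemmas together with continuity of the norm.

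By definition of $R_E'$, choose a sequence $h_k(\Lambda)/D(\Lambda)^k$ with $h_k(\Lambda)\in L_E$ converging to $\xi(\Lambda)$ in the norm on $\mathbb{B}$. For each $k$, since $\lambda\in\mathcal{D}$ satisfies $\lvert D(\lambda)\rvert = 1$, we have
\[ \bigg\lvert\frac{h_k(\lambda)}{D(\lambda)^k}\bigg\rvert = \lvert h_k(\lambda)\rvert. \]
By inequality (2.7), $\lvert h_k(\lambda)\rvert \leq \lvert h_k(\Lambda)\rvert$, and by Lemma 2.1, $\lvert h_k(\Lambda)\rvert = \lvert h_k(\Lambda)/D(\Lambda)^k\rvert$. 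Combining these gives
\[ \bigg\lvert\frac{h_k(\lambda)}{D(\lambda)^k}\bigg\rvert \leq \bigg\lvert\frac{h_k(\Lambda)}{D(\Lambda)^k}\bigg\rvert \]
for every $k$.

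Now pass to the limit. On the left, by the definition of $\xi(\lambda)$ given just after Lemma 2.8, the sequence $h_k(\lambda)/D(\lambda)^k$ converges in $\mathbb{C}_p$ to $\xi(\lambda)$, so by continuity of the absolute value on $\mathbb{C}_p$ the left side tends to $\lvert\xi(\lambda)\rvert$. On the right, the sequence $h_k(\Lambda)/D(\Lambda)^k$ converges to $\xi(\Lambda)$ in the norm on $\mathbb{B}$, so by continuity of that norm the right side tends to $\lvert\xi(\Lambda)\rvert$. Therefore $\lvert\xi(\lambda)\rvert\leq\lvert\xi(\Lambda)\rvert$, as required.

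There is no real obstacle here: the lemma is essentially a packaging of Lemma 2.1 (norms on $L_{E,D(\Lambda)}$ agree with numerator norms), inequality (2.7) (evaluating $L_E$-elements at $\mathcal{D}$ does not increase the norm), and continuity of both norms with respect to the convergence built into the definition of $R_E'$. The only mildly subtle point is ensuring one uses $\lvert D(\lambda)\rvert = 1$ to remove the denominator on the evaluated side so that (2.7) can be applied to the numerator alone.
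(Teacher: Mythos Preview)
Your proof is correct and follows essentially the same approach as the paper: both combine Lemma~2.1, inequality~(2.7), and $\lvert D(\lambda)\rvert=1$ to obtain $\lvert h_k(\lambda)/D(\lambda)^k\rvert\leq\lvert h_k(\Lambda)/D(\Lambda)^k\rvert$, and then pass from the approximations to $\xi$. The only cosmetic difference is that the paper notes, using the ultrametric inequality, that both norms are actually \emph{equal} to their limits for all sufficiently large $k$, whereas you invoke continuity of the norms; either finishing step is fine.
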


\begin{proof}
It follows from Lemma 2.1, Eq.~(2.7), and the fact that $\lvert D(\lambda)\rvert = 1$ for $\lambda\in{\mathcal D}$ that
\begin{equation}
\bigg\lvert \frac{h(\lambda)}{D(\lambda)^k}\bigg\rvert\leq \bigg\lvert \frac{h(\Lambda)}{D(\Lambda)^k}\bigg\rvert
\end{equation}
for $h(\Lambda)\in L_E$, $k\in{\mathbb N}$, and $\lambda\in{\mathcal D}$.  If we choose a sequence $\{h_k(\Lambda)/D(\Lambda)^k\}_{k=1}^\infty$, with the $h_k(\Lambda)$ in $L_E$, converging to $\xi(\Lambda)\in R_E'$, then for $k$ sufficiently large we have 
\[ \lvert\xi(\Lambda)\rvert = \bigg\lvert\frac{h_k(\Lambda)}{D(\Lambda)^k}\bigg\rvert \]
and for $\lambda\in{\mathcal D}$ we have by the definition of $\xi(\lambda)$
\[ \lvert\xi(\lambda)\rvert = \bigg\lvert\frac{h_k(\lambda)}{D(\lambda)^k}\bigg\rvert. \]
The assertion of the lemma now follows from (2.10).
\end{proof}

The following result is our more precise version of Theorem 1.10.
\begin{theorem}
The entries in the matrix ${\mathcal F}(\Lambda)$ lie in $R_E'$.  
Let $\lambda\in ({\mathbb F}_q^\times)^M\times{\mathbb F}_q^{N-M}$ and let $\hat{\lambda}\in{\mathbb Q}_p(\zeta_{q-1})^N$ be its Teichm\"uller lifting.  If $\bar{D}(\lambda)\neq 0$, then $\hat{\lambda}^{p^i}\in{\mathcal D}$ for $i=0,\dots,a-1$ and 
\[ \rho(\lambda,t) = \det\big( I-t{\mathcal F}(\hat{\lambda}^{p^{a-1}}){\mathcal F}(\hat{\lambda}^{p^{a-1}})\cdots{\mathcal F}(\hat{\lambda})\big). \]
\end{theorem}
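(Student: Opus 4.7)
My plan is to address the three claims of Theorem~2.11 in order of increasing difficulty: the easy containment $\hat\lambda^{p^i}\in\mathcal D$, the analyticity statement that $\mathcal F(\Lambda)$ lies in $R'_E$, and the main characteristic-polynomial identity. The first claim follows from Remark~1: since $\bar D(\Lambda)$ has $\mathbb F_p$-coefficients, $\bar D(\lambda^{p^i})=\bar D(\lambda)^{p^i}\neq 0$ for all $i$; Teichmüller lifting commutes with raising to the $p$th power, the first $M$ coordinates of $\hat\lambda$ are roots of unity (absolute value $1$), and the remaining coordinates satisfy $|\hat\lambda_k|\leq 1$, so all three conditions defining $\mathcal D$ are met.

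For the $R'_E$ claim, I plan to bootstrap from the Laurent polynomial matrix $B(\Lambda)$. By Proposition~1.7 and the definitions, $\det B(\Lambda) = D(\Lambda)$ up to a monomial factor, so via the adjugate formula $B(\Lambda)^{\pm 1}$ has entries in $L_{E,D(\Lambda)}\subset R'_E$. The truncation relation between $F_{ij}$ and $B_{ij}$ indicated in Remark~2 (citing \cite{AS0}) then exhibits each $F_{ij}(\Lambda)$ as a Cauchy limit, in the norm of Section~2, of elements expressible in $L_{E,D(\Lambda)}$; the same strategy applied to $F(\Lambda^p)$ via its cofactor expansion yields entries of $F(\Lambda^p)^{-1}$ in $R'_E$. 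Completeness of $R'_E$ then places the entries of $\mathcal F(\Lambda)=F(\Lambda^p)^{-1}F(\Lambda)$ in $R'_E$, and Lemma~2.9 gives uniform evaluation bounds on $\mathcal D$.

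For the main identity I propose a two-stage Dwork-style argument. Stage~A (reduction mod $p$): the definitional equation $F(\Lambda)=F(\Lambda^p)\mathcal F(\Lambda)$ combined with the $F$--$B$ truncation relation forces the entrywise congruence $\mathcal F(\Lambda)\equiv B(\Lambda)\pmod p$ in $R_E$. Evaluating at the Teichmüller lift $\hat\lambda$ (valid by Part~2 and Lemma~2.9) and iterating over the $a$ Frobenius conjugates, Corollary~1.6 delivers
\[
\det\bigl(I - t\,\mathcal F(\hat\lambda^{p^{a-1}})\mathcal F(\hat\lambda^{p^{a-2}})\cdots\mathcal F(\hat\lambda)\bigr) \equiv Q_\lambda(t)\pmod p.
\]
Stage~B (uniqueness upgrade): interpret $\mathcal F(\Lambda)$ as the Frobenius matrix, in the horizontal frame $F(\Lambda)$, of the $A$-hypergeometric connection of \cite{AS0}, so that the iterated product $\Phi:=\mathcal F(\hat\lambda^{p^{a-1}})\cdots\mathcal F(\hat\lambda)$ represents the absolute Frobenius acting on the unit-root subcrystal at $\hat\lambda$. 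Via the comparison with the primitive middle-dimensional cohomology of $X_\lambda$ supplied by that framework, the characteristic polynomial of $\Phi$ is exactly $\rho(\lambda,t)$.

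The \emph{main obstacle} is Stage~B: a mod-$p$ congruence does not determine a polynomial in $\mathbb Z_p[t]$, so upgrading from congruence to equality requires the cohomological interpretation of $\mathcal F$ as a genuine Frobenius on the unit-root subspace rather than merely a matrix with the right reduction. This identification is not formal; it requires the $A$-hypergeometric/GKZ machinery of \cite{AS0} together with the comparison results tying that cohomology to the one controlling the zeta function, ultimately invoking the Katz-style uniqueness of the unit-root $F$-subcrystal of an ordinary $F$-crystal. Making these ingredients compatible with the specific series definition of $F(\Lambda)$ in \S1 is the technically delicate step that the proof must carry out.
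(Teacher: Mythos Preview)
Your treatment of $\hat\lambda^{p^i}\in\mathcal D$ is fine, but both substantive parts have genuine gaps.

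For the $R'_E$ claim: the truncation relation in Remark~2 says that the terms of $F_{ij}$ of bounded degree recover $B_{ij}$, but this does \emph{not} exhibit $F_{ij}$ as a Cauchy limit in the sup-norm of Section~2. The coefficients of $F_{ij}$ are nonzero integers, so any tail has norm~$1$; no sequence of polynomial truncations of $F_{ij}$ converges to $F_{ij}$ in $R_E$. In fact the paper never claims that the individual $F_{ij}$ lie in $R'_E$, only that the entries of the \emph{ratio} $\mathcal F(\Lambda)=F(\Lambda^p)^{-1}F(\Lambda)$ do, and this requires real work. The paper's mechanism is to introduce auxiliary series $G^{(i)}(\Lambda,x)=\delta_-\bigl(F^{(i)}(\Lambda,x)\,\hat\theta_1(\Lambda,x)\bigr)$ and prove (Theorem~8.5) that these are genuine eigenvectors of the Dwork--Frobenius operator $\alpha^*$ with eigenvalue~$p$. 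A contraction-mapping argument on a set $T\subset S^M$ (Section~6) then forces the normalized vector $G(\Lambda)^{-1}G(\Lambda,x)$, being the unique fixed point of $\phi$, into the subspace $(S')^M$; this is what puts $\mathcal G(\Lambda):=G(\Lambda^p)^{-1}G(\Lambda)$ into $R'_{\bf 0}$ (Corollary~8.13). One then writes $\mathcal F(\Lambda)=\mathcal H(\Lambda^p)\,\mathcal G(\Lambda)\,\mathcal H(\Lambda)^{-1}$ with $\mathcal H=F^{-1}G$, and it is the $p$-adically small coefficients of $\hat\theta_1$ that allow one to show $\mathcal H^{\pm1}$ has entries in $R'_{\bf 0}$ (Propositions~9.1 and~9.5).

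For the unit-root identity: your Stage~A congruence is plausible but, as you acknowledge, cannot be upgraded to an equality without Stage~B, and your Stage~B is only a wish. The paper does not invoke $F$-crystals or a GKZ comparison theorem at all. Instead it specializes the eigenvector equation: the $\mathcal G^{(i)}(\hat\lambda,x)$ land in the explicit Dwork Banach space $B$ of Section~10, span an $M$-dimensional $\alpha^*_{\hat\lambda}$-stable subspace, and the matrix of $\alpha^*_{\hat\lambda}$ on that subspace is exactly $q\,\mathcal G(\hat\lambda^{p^{a-1}})\cdots\mathcal G(\hat\lambda)$ (Proposition~10.9). Since $\det(I-t\alpha^*_{\hat\lambda}\mid B)$ is tied to $P_\lambda(qt)$ via the Dwork trace formula~(10.4) and Corollary~10.8, and since the eigenvalues of $\mathcal G(\lambda)$ are $p$-adic units on $\mathcal D$ (Proposition~8.14), the factor coming from this subspace is precisely $\rho(\lambda,qt)$. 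The missing idea in your proposal is the explicit eigenvector construction via $\hat\theta_1$ together with the contraction mapping; these do the work that you hoped to outsource to crystalline uniqueness.
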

Most of this paper is devoted to proving the first sentence of Theorem 2.11.  The remaining assertion will then follow by an application of Dwork's $p$-adic cohomology theory.  

Unfortunately the rings $R_E$ and $R'_E$ are not large enough to contain all the relevant series we shall encounter and therefore need to be enlarged.  Let $\tilde{R} = R_E[\Lambda_1^{\pm 1},\dots,\Lambda_N^{\pm1}]$ and let 
\[ \tilde{R}' = R'_E[\Lambda_1^{\pm 1},\dots,\Lambda_M^{\pm 1},\Lambda_{M+1},\dots,\Lambda_N]. \]
It is clear from the definition of ${\mathcal D}$ that the elements of $\tilde{R}'$ are functions on ${\mathcal D}$.  
We define ${R}$ (resp.\ ${R}'$) to be the completion of~$\tilde{R}$ (resp.~$\tilde{R}'$) under the norm on ${\mathbb B}$.  Note that the elements of ${R}'$ define functions on~${\mathcal D}$.  It follows from Lemma~2.9 that 
\begin{equation}
\lvert\xi(\lambda)\rvert\leq \lvert\xi(\Lambda)\rvert\quad\text{for $\xi(\Lambda)\in {R}'$ and $\lambda\in{\mathcal D}$.}
\end{equation}

{\bf Remark.}  One can show using the argument of Dwork\cite[Lemma~1.2]{D} that for $\xi(\Lambda)\in R'$ 
\[ \sup_{\lambda\in{\mathcal D}} \lvert \xi(\lambda)\rvert = \lvert \xi(\Lambda)\rvert. \]

Let ${\mathcal M}\subseteq{\mathbb Z}^N$ be the abelian group generated by $A$.  We define an ${\mathcal M}$-grading on ${R}$ and ${R}'$.  Every $\xi(\Lambda)\in {R}$ can be written as a series $\xi(\Lambda) = \sum_{l\in{\mathbb Z}^N} c_l\Lambda^l$.
We say that {\it $\xi(\Lambda)$ has degree $u\in{\mathcal M}$\/} if $\sum_{k=1}^N l_k{\bf a}_k = u$ for all $l\in{\mathbb Z}^N$ for which $c_l\neq 0$.  We denote by ${R}_u$ the ${\mathbb C}_p$-subspace of ${R}$ consisting of all elements of degree~$u$.  Each ${R}_u$ is complete in the norm and is a module over ${R}_{\bf 0}$, which is a ring.  Identical remarks apply to the induced grading on ${R}'$.  

\section{$p$-adic estimates}

In this section we recall some estimates from \cite[Section 3]{AS1} that will be applied later.
Let $ {\rm AH}(t)= \exp(\sum_{i=0}^{\infty}t^{p^i}/p^i)$ be the Artin-Hasse series, a power series in $t$ that has $p$-integral coefficients, let $\gamma_0$ be a zero of the series $\sum_{i=0}^\infty t^{p^i}/p^i$ having ${\rm ord}\;\gamma_0 = 1/(p-1)$, and set 
\[ \theta(t) = {\rm AH}({\gamma}_0t)=\sum_{i=0}^{\infty}\theta_i t^i. \]
We then have
\begin{equation}
{\rm ord}\: \theta_i\geq \frac{i}{p-1}.
\end{equation}

We define $\hat{\theta}(t) = \prod_{j=0}^\infty \theta(t^{p^j})$, which gives $\theta(t) = \hat{\theta}(t)/\hat{\theta}(t^p)$.   If we set 
\begin{equation}
\gamma_j = \sum_{i=0}^j \frac{\gamma_0^{p^i}}{p^i},
\end{equation}
then
\begin{equation}
\hat{\theta}(t) = \exp\bigg(\sum_{j=0}^{\infty} \gamma_j t^{p^j}\bigg) = \prod_{j=0}^\infty \exp(\gamma_j t^{p^j}).
\end{equation}
If we write $\hat{\theta}(t) = \sum_{i=0}^\infty \hat{\theta}_i(\gamma_0 t)^i/i!$, then by \cite[Eq.~(3.8)]{AS1} we have
\begin{equation}
{\rm ord}\:\hat{\theta}_i\geq 0.
\end{equation}

We shall also need the series
\begin{equation}
\hat{\theta}_1(t) := \prod_{j=1}^\infty \exp(\gamma_jt^{p^j}) = :\sum_{i=0}^\infty \frac{\hat{\theta}_{1,i}}{i!}(\gamma_0 t)^i. 
\end{equation}
Note that $\hat{\theta}(t) = \exp(\gamma_0t)\hat{\theta}_1(t)$.  
By \cite[Eq.~(3.10)]{AS1}
\begin{equation}
{\rm ord}\:\hat{\theta}_{1,i}\geq \frac{i(p-1)}{p}. 
\end{equation}

Define the series $\hat{\theta}_1(\Lambda,x)$ by the formula
\begin{equation}
\hat{\theta}_1(\Lambda,x) = \prod_{k=1}^N \hat{\theta}_1(\Lambda_kx^{{\bf a}_k}).
\end{equation}
Expanding the product~(3.7) according to powers of $x$ we get
\begin{equation}
\hat{\theta}_1(\Lambda,x) = \sum_{u=(u_0,\dots,u_{n+1})\in{\mathbb N}A} \hat{\theta}_{1,u}(\Lambda)\gamma_0^{u_{n+1}} x^u,
\end{equation}
where
\begin{equation}
\hat{\theta}_{1,u}(\Lambda) = \sum_{\substack{k_1,\dots,k_N\in{\mathbb N}\\ \sum_{j=1}^N k_j{\bf a}_j = u}} \bigg(\prod_{j=1}^N \frac{\hat{\theta}_{1,k_j}}{k_j!}\bigg)\Lambda_1^{k_1}\cdots\Lambda_N^{k_N}.
\end{equation}

We have similar results for the reciprocal power series
\[ \hat{\theta}_1(t)^{-1} = \prod_{j=1}^\infty \exp(-\gamma_jt^{p^j}). \]
If we write
\begin{equation}
\hat{\theta}_1(t)^{-1} = \sum_{i=0}^\infty \frac{\hat{\theta}'_{1,i}}{i!}(\gamma_0 t)^i,
\end{equation}
then the coefficients satisfy
\begin{equation}
{\rm ord}\: \hat{\theta}'_{1,i}\geq \frac{i(p-1)}{p}.
\end{equation}
We also have
\begin{equation}
\hat{\theta}_1(\Lambda,x)^{-1} = \prod_{k=1}^N \hat{\theta}_1(\Lambda_kx^{{\bf a}_k})^{-1},
\end{equation}
which we again expand in powers of $x$ as
\begin{equation}
\hat{\theta}_1(\Lambda,x)^{-1} = \sum_{u=(u_0,\dots,u_{n+1})\in{\mathbb N}A} \hat{\theta}'_{1,u}(\Lambda)\gamma_0^{u_{n+1}} x^u
\end{equation}
with
\begin{equation}
\hat{\theta}'_{1,u}(\Lambda) = \sum_{\substack{k_1,\dots,k_N\in{\mathbb N}\\ \sum_{j=1}^N k_j{\bf a}_j = u}} \bigg(\prod_{j=1}^N \frac{\hat{\theta}'_{1,k_j}}{k_j!}\bigg) \Lambda_1^{k_1}\cdots\Lambda_N^{k_N}.
\end{equation}

We also define
\begin{equation}
\theta(\Lambda,x) = \prod_{k=1}^N \theta(\Lambda_kx^{{\bf a}_k}).
\end{equation}
Expanding the right-hand side in powers of $x$, we have 
\begin{equation}
\theta(\Lambda,x) = \sum_{u\in{\mathbb N}A} \theta_u(\Lambda)x^u,
\end{equation}
where
\begin{equation}
\theta_u(\Lambda) = \sum_{\nu\in{\mathbb N}^N} \theta^{(u)}_\nu\Lambda^\nu
\end{equation}
and
\begin{equation}
\theta^{(u)}_\nu = \begin{cases} \prod_{k=1}^N \theta_{\nu_k} & \text{if $\sum_{k=1}^N \nu_k{\bf a}_k = u$,} \\
0 & \text{if $\sum_{k=1}^N \nu_k{\bf a}_k \neq u$,} \end{cases}
\end{equation}
so $\theta_u(\Lambda)$ is homogeneous of degree $u$.
The equation $\sum_{k=1}^N \nu_k{\bf a}_k = u$ has only finitely many solutions $\nu\in{\mathbb N}^N$, so $\theta_u(\Lambda)$ is a polynomial in the $\Lambda_k$.  Equations (3.1) and (3.18) show that
\begin{equation}
{\rm ord}_p\:\theta^{(u)}_\nu \geq\frac{\sum_{j=1}^N \nu_j}{p-1} = \frac{u_{n+1}}{p-1}.
\end{equation}

\section{The Dwork-Frobenius operator}

We define the spaces $S$ and $S'$ and Dwork's Frobenius operator on those spaces.

Note that the abelian group ${\mathcal M}$ generated by $A$ lies in the hyperplane $\sum_{i=0}^n u_i = du_{n+1}$ in ${\mathbb R}^{n+2}$.  Set ${\mathcal M}_- = {\mathcal M}\cap({\mathbb Z}_{<0})^{n+2}$.  We denote by $\delta_-$ the truncation operator on formal Laurent series in variables $x_0,\dots,x_{n+1}$ that preserves only those terms having all exponents negative:
\[ \delta_-\bigg(\sum_{k\in{\mathbb Z}^{n+2}}c_kx^k\bigg) = \sum_{k\in({\mathbb Z}_{<0})^{n+2}} c_kx^k. \]
We use the same notation for formal Laurent series in a single variable $t$:
\[ \delta_-\bigg(\sum_{k=-\infty}^\infty c_kt^k\bigg) = \sum_{k=-\infty}^{-1} c_kt^k. \]
It is straightforward to check that if $\xi_1$ and $\xi_2$ are two series for which the product $\xi_1\xi_2$ is defined and if no monomial in $\xi_2$ has a negative exponent, then
\begin{equation}
\delta_-\big(\delta_-(\xi_1)\xi_2\big) = \delta_-(\xi_1\xi_2). 
\end{equation} 

Define $S$ to be the ${\mathbb C}_p$-vector space of formal series
\[ S = \bigg\{\xi(\Lambda,x) = \sum_{u\in {\mathcal M}_-} \xi_{u}(\Lambda) \gamma_0^{u_{n+1}}x^{u} \mid 
\text{$\xi_{u}(\Lambda)\in {R}_{u}$ and $\{\lvert\xi_{u}\rvert\}_u$ is bounded}\bigg\}. \]
Let $S'$ be defined analogously with the condition ``$\xi_{u}(\Lambda)\in {R}_{u}$'' being replaced by 
``$\xi_{u}(\Lambda)\in {R}_{u}'$''.  Define a norm on $S$ by setting
\[ \lvert\xi(\Lambda,x)\rvert = \sup_{u\in{\mathcal M}_-}\{\lvert\xi_{u}\rvert\}. \]
Both $S$ and $S'$ are complete under this norm.  

Let 
\[ \xi(\Lambda,x) = \sum_{\nu\in {\mathcal M}_-} \xi_\nu(\Lambda) \gamma_0^{\nu_{n+1}}x^\nu\in S. \]
We show that the product $\theta(\Lambda,x) \xi(\Lambda^p,x^p)$ is well-defined as a formal series in $x$.  We have formally
\[ \theta(\Lambda,x) \xi(\Lambda^p,x^p) = \sum_{\rho\in{\mathcal M}} \zeta_\rho(\Lambda)x^\rho, \]
where
\begin{equation}
\zeta_\rho(\Lambda) = \sum_{\substack{u\in{\mathbb N}A,\nu\in{\mathcal M}_-\\ u+p\nu = \rho}} \gamma_0^{\nu_{n+1}}\theta_u(\Lambda)\xi_\nu(\Lambda^p).
\end{equation}
Since $\theta_u(\Lambda)$ is a polynomial, the product $\theta_u(\Lambda)\xi_\nu(\Lambda^p)$ is a well-defined element of~${R}_{\rho}$.  It follows from (3.17), (3.19), and the equality $u+p\nu=\rho$ that the coefficients of $\gamma_0^{\nu_{n+1}}\theta_u(\Lambda)$ all have $p$-ordinal at least $\big(\rho_{n+1}/(p-1)\big) - \nu_{n+1}$.  Since $\lvert\xi_\nu(\Lambda)\rvert$ is bounded independently of $\nu$ and there are only finitely many terms on the right-hand side of (4.2) with a given value of $\nu_{n+1}$, the series (4.2) converges to an element of ${R}_\rho$ (because $-\nu_{n+1}\to\infty$ as $\nu\to\infty$).  This estimate also shows that if $\xi(\Lambda,x)\in S'$, then $\zeta_\rho(\Lambda)\in {R}'_\rho$.  

Define for $\xi(\Lambda,x)\in S$
\begin{align*}
\alpha^*\big(\xi(\Lambda,x)\big) &= \delta_-\big(\theta(\Lambda,x)\xi(\Lambda^p,x^p)\big) \\
 &= \sum_{\rho\in {\mathcal M}_-} \zeta_{\rho}(\Lambda)x^{\rho}.
\end{align*}
For $\rho\in {\mathcal M}_-$, put $\eta_{\rho}(\Lambda) = \gamma_0^{-\rho_{n+1}}\zeta_{\rho}(\Lambda)$, so that
\begin{equation}
\alpha^*\big(\xi(\Lambda,x)\big) = \sum_{\rho\in {\mathcal M}_-} \eta_{\rho}(\Lambda)\gamma_0^{\rho_{n+1}}x^{\rho}
\end{equation}
with (by (4.2))
\begin{equation}
\eta_{\rho}(\Lambda) = \sum_{\substack{u\in{\mathbb N}A,\,\nu\in {\mathcal M}_-\\ u+p\nu=\rho}}\gamma_0^{-\rho_{n+1}+\nu_{n+1}}\theta_u(\Lambda)\xi_{\nu}(\Lambda^p).
\end{equation}

\begin{proposition}
The map $\alpha^*$ is an endomorphism of $S$ and of $S'$, and for $\xi(\Lambda,x)\in S$ we have
\begin{equation}
\lvert\alpha^*\big(\xi(\Lambda,x)\big)\rvert\leq \lvert p\xi(\Lambda,x)\rvert.
\end{equation}
\end{proposition}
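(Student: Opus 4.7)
The paragraph preceding the proposition already establishes that for each $\rho\in{\mathcal M}_-$ the coefficient $\zeta_\rho(\Lambda)$ of $\alpha^*(\xi)$ converges to an element of ${R}_\rho$ (and of ${R}'_\rho$ when $\xi\in S'$), so the questions of well-definedness and grading are settled. All that remains is to prove the uniform bound
\[ \lvert\eta_\rho(\Lambda)\rvert \le p^{-1}\lvert\xi(\Lambda,x)\rvert\quad\text{for every }\rho\in{\mathcal M}_-, \]
since this simultaneously gives the boundedness of $\{\lvert\eta_\rho\rvert\}_\rho$ needed for $\alpha^*(\xi)$ to lie in $S$ (resp.\ $S'$) and the norm inequality (4.7). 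Rescaling $\zeta_\rho$ by the scalar $\gamma_0^{-\rho_{n+1}}$ preserves membership in ${R}_\rho$ and ${R}'_\rho$, so $\eta_\rho$ lands in the correct graded piece.

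To prove the uniform bound I would examine a single summand of the series (4.5). Fix $u\in{\mathbb N}A$ and $\nu\in{\mathcal M}_-$ with $u+p\nu=\rho$. Comparing last coordinates yields the key identity $u_{n+1}=\rho_{n+1}-p\nu_{n+1}$, hence $-\rho_{n+1}+\nu_{n+1}=-u_{n+1}-(p-1)\nu_{n+1}$. Combining ${\rm ord}\,\gamma_0=1/(p-1)$ with the estimate (3.19) applied to $\theta_u(\Lambda)$, one finds
\[ \lvert\gamma_0^{-\rho_{n+1}+\nu_{n+1}}\theta_u(\Lambda)\rvert \le p^{u_{n+1}/(p-1)+\nu_{n+1}}\cdot p^{-u_{n+1}/(p-1)}=p^{\nu_{n+1}}. \]
Since $\nu\in{\mathcal M}_-$ forces $\nu_{n+1}\le -1$, this factor is at most $p^{-1}$. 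The remaining factor satisfies $\lvert\xi_\nu(\Lambda^p)\rvert=\lvert\xi_\nu(\Lambda)\rvert\le\lvert\xi(\Lambda,x)\rvert$ because the substitution $\Lambda\mapsto\Lambda^p$ preserves the sup-norm on ${R}$. Taking the supremum over summands, and then over $\rho$, yields the claimed bound.

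The only real obstacle is the careful bookkeeping of powers of $\gamma_0$ against the ord-estimate (3.19); once the identity $u_{n+1}=\rho_{n+1}-p\nu_{n+1}$ is substituted, the $u_{n+1}$-dependence cancels exactly, and the extra factor of $p$ appears automatically from the strict negativity of $\nu_{n+1}$. This is a clean combinatorial manifestation of the Dwork-style twist by $\gamma_0^{u_{n+1}}$ that has been built into the definition of $S$; the same argument works verbatim in $R'$ to handle the $S'$ case.
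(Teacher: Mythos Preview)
Your proposal is correct and follows essentially the same route as the paper's own proof: both reduce the proposition to bounding each summand $\gamma_0^{-\rho_{n+1}+\nu_{n+1}}\theta_u(\Lambda)\xi_\nu(\Lambda^p)$ in (4.4), use the estimate (3.19) together with ${\rm ord}\,\gamma_0=1/(p-1)$, and exploit the relation $u_{n+1}+p\nu_{n+1}=\rho_{n+1}$ to collapse the exponent to $-\nu_{n+1}\ge 1$. The only cosmetic difference is that the paper computes the combined ordinal $\frac{-\rho_{n+1}+\nu_{n+1}+u_{n+1}}{p-1}$ directly and then simplifies, whereas you first rewrite $-\rho_{n+1}+\nu_{n+1}=-u_{n+1}-(p-1)\nu_{n+1}$ and split the factors; the arithmetic is identical.
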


\begin{proof}
By (4.3), the proposition will follow from the estimate
\[ \lvert\eta_{\rho}(\Lambda)\rvert\leq \lvert p\xi(\Lambda,x)\rvert\quad\text{for all $\rho\in {\mathcal M}_-$.} \]
Using (4.4), we see that this estimate will follow in turn from the estimate
\[ \lvert\gamma_0^{-\rho_{n+1}+\nu_{n+1}}\theta_u(\Lambda)\rvert\leq \lvert p\rvert \]
for all $u\in{\mathbb N}A$, $\nu\in {\mathcal M}_-$, with $u+p\nu = \rho$.  From (3.17) and (3.19) we see that all coefficients of $\gamma_0^{-\rho_{n+1}+\nu_{n+1}}\theta_u(\Lambda)$ have $p$-ordinal greater than or equal to
\[ \frac{-\rho_{n+1}+\nu_{n+1}+u_{n+1}}{p-1}. \]
Since $u+p\nu=\rho$, this expression simplifies to $-\nu_{n+1}$, so
\begin{equation}
\lvert\gamma_0^{-\rho_{n+1}+\nu_{n+1}}\theta_u(\Lambda)\rvert\leq \lvert p\rvert^{-\nu_{n+1}}
\end{equation}
and $-\nu_{n+1}\geq 1$ since $\nu\in {\mathcal M}_-$.  
\end{proof}

Note that the equality $-\nu_{n+1} = 1$ occurs for only $M$ points $\nu\in{\mathcal M}_-$, namely,
$\nu = -{\bf a}_1,\dots,-{\bf a}_M$.  The following corollary is then an immediate consequence of the proof of Proposition~4.5.
\begin{corollary}
If $\xi_{\nu}(\Lambda) = 0$ for $\nu = -{\bf a}_1,\dots,-{\bf a}_M$, then $\lvert\alpha^*(\xi(\Lambda,x))\rvert\leq \lvert p^{2}\xi(\Lambda,x)\rvert$.
\end{corollary}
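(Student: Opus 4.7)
The plan is to re-use the estimate already established in the proof of Proposition~4.5 and merely sharpen one inequality. Recall that (4.4) expresses $\eta_{\rho}(\Lambda)$ as a sum indexed by pairs $(u,\nu)\in{\mathbb N}A\times{\mathcal M}_-$ with $u+p\nu=\rho$, and that proof reduced the desired norm bound to the pointwise estimate (4.6),
\[
\bigl|\gamma_0^{-\rho_{n+1}+\nu_{n+1}}\theta_u(\Lambda)\bigr|\leq |p|^{-\nu_{n+1}}.
\]
The factor $|p|$ appearing in Proposition~4.5 came from the inequality $-\nu_{n+1}\geq 1$ for $\nu\in{\mathcal M}_-$. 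To upgrade this to $|p|^2$ it suffices to show that, under the hypothesis of the corollary, the only $\nu$ that actually contribute to~(4.4) satisfy $-\nu_{n+1}\geq 2$.

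To that end, I would verify the combinatorial remark preceding the corollary: if $\nu\in{\mathcal M}_-$ has $\nu_{n+1}=-1$, then $-\nu\in{\mathcal M}$ has last coordinate $1$, all other coordinates strictly positive, and first $n+1$ coordinates summing to $d$ (the last because ${\mathcal M}$ lies in the hyperplane $\sum_{i=0}^n u_i=du_{n+1}$). By the defining property of $U$ this forces $-\nu\in U=\{{\bf a}_1,\dots,{\bf a}_M\}$, so $\nu\in\{-{\bf a}_1,\dots,-{\bf a}_M\}$. Under the vanishing hypothesis those $M$ terms are killed, leaving in~(4.4) only pairs with $-\nu_{n+1}\geq 2$.

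Plugging this improved lower bound into~(4.6), and using that substitution $\Lambda\mapsto\Lambda^p$ preserves the norm so $|\xi_{\nu}(\Lambda^p)|=|\xi_{\nu}(\Lambda)|\leq|\xi(\Lambda,x)|$, each surviving summand of~(4.4) satisfies
\[
\bigl|\gamma_0^{-\rho_{n+1}+\nu_{n+1}}\theta_u(\Lambda)\xi_{\nu}(\Lambda^p)\bigr|\leq |p|^2\,|\xi(\Lambda,x)|.
\]
Hence $|\eta_{\rho}(\Lambda)|\leq|p^2\xi(\Lambda,x)|$ for every $\rho\in{\mathcal M}_-$, and taking the supremum gives $|\alpha^*(\xi(\Lambda,x))|\leq|p^2\xi(\Lambda,x)|$. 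There is no real obstacle: the entire content is the identification of the $M$ lattice points of ${\mathcal M}_-$ with $|\nu_{n+1}|=1$, which the text has already flagged, so the corollary is immediate from the proof of Proposition~4.5.
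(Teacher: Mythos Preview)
Your proposal is correct and follows precisely the paper's own approach: the paper states the corollary as ``an immediate consequence of the proof of Proposition~4.5'' together with the observation that the only $\nu\in{\mathcal M}_-$ with $-\nu_{n+1}=1$ are $\nu=-{\bf a}_1,\dots,-{\bf a}_M$, which is exactly the combinatorial remark you spell out and feed back into estimate~(4.7).
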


\section{A technical lemma}

The action of $\alpha^*$ is extended to $S^M$ componentwise: if
\begin{equation}
 \xi(\Lambda,x) = (\xi^{(1)}(\Lambda,x),\dots,\xi^{(M)}(\Lambda,x))\in S^M, 
\end{equation}
then
\begin{equation}
 \alpha^*\big(\xi(\Lambda,x)\big) = \big(\alpha^*\big(\xi^{(1)}(\Lambda,x)\big),\dots,\alpha^*\big(\xi^{(M)}(\Lambda,x)\big)\big). 
\end{equation}
Let $\xi(\Lambda,x)$ be as in (5.1) with 
\begin{equation}
\xi^{(i)}(\Lambda,x) =  \sum_{\nu\in {\mathcal M}_-} \xi_\nu^{(i)}(\Lambda) \gamma_0^{\nu_{n+1}}x^\nu.
\end{equation}
Since $\xi^{(i)}_{-{\bf a}_j}(\Lambda)\in R_{-{\bf a}_j}$, the matrix
\[ {\mathbb M}\big(\xi(\Lambda,x)\big):= \big( \Lambda_j\xi^{(i)}_{-{\bf a}_j}(\Lambda)\big)_{i,j=1}^M \]
has entries in $R_{\bf 0}$.  If $\xi(\Lambda,x)\in (S')^M$, then ${\mathbb M}\big(\xi(\Lambda,x)\big)$ has entries in $R'_{\bf 0}$.  The map $\xi(\Lambda,x)\mapsto {\mathbb M}(\xi(\Lambda,x))$ is a ${\mathbb C}_p$-linear map from $S^M$ to the ${\mathbb C}_p$-vector space of $(M\times M)$-matrices with entries in $R_{\bf 0}$.  Note that if $Y(\Lambda)$ is an $(M\times M)$-matrix with entries in $R_{\bf 0}$, then
\begin{equation}
{\mathbb M}\big(Y(\Lambda)\xi(\Lambda,x)\big) = Y(\Lambda) {\mathbb M}\big(\xi(\Lambda,x)\big), 
\end{equation}
where we regard $\xi(\Lambda,x)$ as a column vector for the purpose of matrix multiplication.
In particular, if ${\mathbb M}\big(\xi(\Lambda,x)\big)$ is an invertible matrix, then
\begin{equation}
 {\mathbb M}\bigg( {\mathbb M}\big(\xi(\Lambda,x)\big)^{-1}\xi(\Lambda,x)\bigg) = I. 
 \end{equation}

We extend the norm on $S$ to $S^M$ and the norm on $R$ to ${\rm Mat}_M(R)$, the $(M\times M)$-matrices with entries in $R$.  For $\xi(\Lambda,x)$ as in (5.1), we define
\[ \lvert \xi(\Lambda,x) \rvert = \max \{\lvert\xi^{(i)}(\Lambda,x) \rvert \}_{i=1}^M \]
and for $Y(\Lambda) = \big(Y_{ij}(\Lambda)\big)_{i,j=1}^M$ with $Y_{ij}(\Lambda)\in R$ we define
\[ \lvert Y(\Lambda)\rvert = \max\{\lvert Y_{ij}(\Lambda)\rvert\}_{i,j=1}^M. \]

Note that the matrix ${\mathbb M}\big(\xi(\Lambda,x)\big)$ has an inverse with entries in $R_{\bf 0}$ if and only if $\det 
{\mathbb M}\big(\xi(\Lambda,x)\big)$ is an invertible element of $R_{\bf 0}$.  Likewise, if $\xi(\Lambda,x)\in (S')^M$, this matrix has an inverse with entries in $R'_{\bf 0}$ if and only if its determinant is an invertible element of $R'_{\bf 0}$.  The main result of this section is the following assertion.
\begin{lemma}
Let $\xi(\Lambda,x)\in S^M$ with ${\mathbb M}\big(\xi(\Lambda,x)\big)$ invertible, $\big\lvert {\mathbb M}\big(\xi(\Lambda,x)\big)\big\rvert = \lvert\xi(\Lambda,x)\rvert$, and $\big\lvert\det{\mathbb M}\big(\xi(\Lambda,x)\big)\big\rvert  = \lvert\xi(\Lambda,x)\rvert^M$.  Then ${\mathbb M}\big(\alpha^*(\xi(\Lambda,x))\big)$ is invertible, 
\begin{equation}
 \big\lvert {\mathbb M}\big(\alpha^*(\xi(\Lambda,x))\big)\big\rvert = \big\lvert\alpha^*\big(\xi(\Lambda,x)\big)\big\rvert = \lvert p\xi(\Lambda,x)\rvert,
\end{equation}
and
\begin{equation}
 \big\lvert\det {\mathbb M}\big(\alpha^*(\xi(\Lambda,x))\big)\big\rvert = \lvert p\xi(\Lambda,x)\rvert^M. 
\end{equation}
\end{lemma}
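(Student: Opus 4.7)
The plan is to extract a dominant piece of ${\mathbb M}(\alpha^*(\xi))$ of norm $|p\xi|$ tied to the Hasse-Witt matrix $B(\Lambda)$, then absorb the remainder as a small perturbation.  First I would analyze (4.4) for $\rho=-{\bf a}_j$, separating contributions with $\nu_{n+1}=-1$ from those with $\nu_{n+1}\le-2$.  The key combinatorial observation is that any $\nu\in{\mathcal M}_-$ with $\nu_{n+1}=-1$ automatically has $-\nu\in A$ with strictly positive coordinates, so $\nu=-{\bf a}_k$ for some $k\in\{1,\dots,M\}$, and every such $k$ occurs.  The proof of Proposition~4.5 already bounds the $\nu_{n+1}\le-2$ contributions by $|p|^2|\xi|$, which yields
$$\eta_{-{\bf a}_j}^{(i)}(\Lambda) = \sum_{k=1}^M \theta_{p{\bf a}_k-{\bf a}_j}(\Lambda)\,\xi^{(i)}_{-{\bf a}_k}(\Lambda^p) + r_j^{(i)}(\Lambda),\qquad |r_j^{(i)}(\Lambda)|\le|p|^2|\xi|.$$
Next, each multi-index $\tau$ contributing to $\theta_{p{\bf a}_k-{\bf a}_j}(\Lambda)$ satisfies $\sum_l\tau_l=p-1$, so $\tau_l<p$ for all $l$ and $\theta_{\tau_l}=\gamma_0^{\tau_l}/\tau_l!$ exactly (since $\hat\theta_1(t)-1$ vanishes to order $p$).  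Hence $\theta_{p{\bf a}_k-{\bf a}_j}(\Lambda)=\gamma_0^{p-1}H_{kj}(\Lambda)$, and combining with the identity $\Lambda_j H_{kj}(\Lambda)=\Lambda_k^p B_{kj}(\Lambda)$ from (1.5) gives the key matrix identity
$${\mathbb M}(\alpha^*(\xi)) = \gamma_0^{p-1}\,{\mathbb M}(\xi)(\Lambda^p)\,B(\Lambda) + E(\Lambda),\qquad |E(\Lambda)|\le|p|^2|\xi|.$$

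With this decomposition I would expand $\det{\mathbb M}(\alpha^*(\xi))$ by multilinearity in rows.  The pure leading term is $\gamma_0^{M(p-1)}\det({\mathbb M}(\xi))(\Lambda^p)\,D(\Lambda)$, of norm $|p|^M|\xi|^M=|p\xi|^M$ using Proposition~1.7 (constant term $1$ and $p$-integral coefficients force $|D(\Lambda)|=1$) and the hypothesis $|\det{\mathbb M}(\xi)|=|\xi|^M$.  Every other term involves at least one row of $E$ and hence has norm at most $|p|^{M+1}|\xi|^M$, so the ultrametric yields (5.8).  The norm claim (5.7) follows from the sandwich
$$|p\xi|^M = |\det{\mathbb M}(\alpha^*(\xi))| \le |{\mathbb M}(\alpha^*(\xi))|^M \le |\alpha^*(\xi)|^M \le |p\xi|^M,$$
which forces equality throughout (the last inequality is Proposition~4.5).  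Invertibility of ${\mathbb M}(\alpha^*(\xi))$ in $R_{\bf 0}$ (and in $R'_{\bf 0}$ when $\xi\in(S')^M$) follows from the factorization: the leading matrix is a product of invertible factors ($\gamma_0^{p-1}\in{\mathbb C}_p^\times$, ${\mathbb M}(\xi)(\Lambda^p)$ by hypothesis, $B(\Lambda)$ by Proposition~1.7), and $(\gamma_0^{p-1}{\mathbb M}(\xi)(\Lambda^p)B(\Lambda))^{-1}E$ has norm at most $|p|$, so $I$ plus this perturbation is invertible via a Neumann series.

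The main obstacle is establishing the matrix identity above, namely the exact algebraic match between the leading stratum of $\alpha^*$ and the Hasse-Witt matrix $B$.  It rests on two ingredients that are easy to overlook: the characterization of $\{\nu\in{\mathcal M}_-:\nu_{n+1}=-1\}$ as exactly $\{-{\bf a}_k\}_{k=1}^M$, and the fact that $\theta_i=\gamma_0^i/i!$ holds exactly (not just asymptotically) for $i<p$.  Without the former, additional $|p|$-size contributions would spoil the norm computations; without the latter, $\theta_{p{\bf a}_k-{\bf a}_j}(\Lambda)$ would only match $\gamma_0^{p-1}H_{kj}(\Lambda)$ modulo higher powers of $p$, weakening the decomposition into something no longer strong enough to pin $|\det{\mathbb M}(\alpha^*(\xi))|$ down to the precise value $|p\xi|^M$.
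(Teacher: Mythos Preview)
Your proposal is correct and follows essentially the same route as the paper.  The paper likewise splits ${\mathbb M}(\alpha^*(\xi))$ into a leading term $\widetilde{\mathbb M}(\Lambda)={\mathbb M}(\xi(\Lambda^p,x))\,C(\Lambda)^{-p}{\mathbb M}_\theta(\Lambda)C(\Lambda)=\gamma_0^{p-1}{\mathbb M}(\xi)(\Lambda^p)B(\Lambda)$ and a remainder ${\mathbb M}^{(2)}$ of norm $\le |p|^2|\xi|$, identifies $\theta_{p{\bf a}_k-{\bf a}_j}=\gamma_0^{p-1}H_{kj}$ exactly as you do, and then factors ${\mathbb M}(\alpha^*(\xi))=\widetilde{\mathbb M}(I+\widetilde{\mathbb M}^{-1}{\mathbb M}^{(2)})$ to read off invertibility and $|\det|=|p\xi|^M$ (your multilinearity-in-rows variant for the determinant is an equivalent bookkeeping choice), with (5.7) deduced from the same sandwich you wrote.
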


{\bf Remark.}  To prove Lemma 5.6, it suffices to show that ${\mathbb M}\big(\alpha^*(\xi(\Lambda,x))\big)$ is invertible and that (5.8) holds: from Proposition 4.5 we get
\[ \big\lvert {\mathbb M}\big(\alpha^*(\xi(\Lambda,x))\big)\big\rvert \leq  \big\lvert\alpha^*\big(\xi(\Lambda,x)\big)\big\rvert \leq \lvert p\xi(\Lambda,x)\rvert. \]
If any of these inequalities were strict, the usual formula for the determinant of a matrix in terms of its entries would imply that
\[ \big\lvert\det {\mathbb M}\big(\alpha^*(\xi(\Lambda,x))\big)\big\rvert < \lvert p\xi(\Lambda,x)\rvert^M. \]
Similar reasoning applies to the inverse of ${\mathbb M}\big(\alpha^*(\xi(\Lambda,x))\big)$.  
By (5.8) we have
\begin{equation}
\big\lvert\det {\mathbb M}\big(\alpha^*(\xi(\Lambda,x))\big)^{-1}\big\rvert = \lvert p\xi(\Lambda,x)\rvert^{-M}.
\end{equation}
The usual formula for the inverse of a matrix in terms of its cofactors implies that 
\[ \big\lvert{\mathbb M}\big(\alpha^*(\xi(\Lambda,x))\big)^{-1}\big\rvert\leq \frac{1}{\lvert p\xi(\Lambda,x)\rvert}. \]
But if this inequality were strict it would lead to a contradiction of (5.9), so we have the following corollary.
\begin{corollary}
Under the hypotheses of Lemma 5.6, 
\[ \big\lvert {\mathbb M}\big(\alpha^*(\xi(\Lambda,x))\big)^{-1}\big\rvert=\frac{1}{\lvert p\xi(\Lambda,x)\rvert}. \]
\end{corollary}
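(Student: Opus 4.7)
The corollary follows formally from Lemma 5.6 by combining the cofactor formula for a matrix inverse with the non-archimedean Leibniz expansion of the determinant; indeed, the author's remark preceding the statement already sketches the bulk of the argument, and I would simply make each step explicit.

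First, I would observe that (5.9) is immediate from (5.8): the determinant of the inverse is the reciprocal of the determinant, so
\[ \big\lvert \det {\mathbb M}\big(\alpha^*(\xi(\Lambda,x))\big)^{-1}\big\rvert = \lvert p\xi(\Lambda,x)\rvert^{-M}. \]

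Next, I would derive the upper bound
\[ \big\lvert {\mathbb M}\big(\alpha^*(\xi(\Lambda,x))\big)^{-1}\big\rvert \leq \frac{1}{\lvert p\xi(\Lambda,x)\rvert} \]
via the cofactor formula. Each entry of the inverse is $\pm 1$ times an $(M-1)\times(M-1)$ minor of ${\mathbb M}\big(\alpha^*(\xi(\Lambda,x))\big)$ divided by its determinant. By (5.7), each entry of ${\mathbb M}\big(\alpha^*(\xi(\Lambda,x))\big)$ has norm at most $\lvert p\xi(\Lambda,x)\rvert$, so the Leibniz expansion of each $(M-1)\times(M-1)$ minor yields norm at most $\lvert p\xi(\Lambda,x)\rvert^{M-1}$. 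Dividing by $\lvert p\xi(\Lambda,x)\rvert^M$ (from (5.8)) gives the claimed bound.

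Finally, I would obtain the reverse inequality by contradiction, the symmetric argument foreshadowed in the author's remark. Suppose
\[ \big\lvert {\mathbb M}\big(\alpha^*(\xi(\Lambda,x))\big)^{-1}\big\rvert < \frac{1}{\lvert p\xi(\Lambda,x)\rvert}. \]
Then every entry of the inverse has norm strictly less than $\lvert p\xi(\Lambda,x)\rvert^{-1}$, and applying the Leibniz formula to $\det {\mathbb M}\big(\alpha^*(\xi(\Lambda,x))\big)^{-1}$ would force
\[ \big\lvert \det {\mathbb M}\big(\alpha^*(\xi(\Lambda,x))\big)^{-1}\big\rvert < \lvert p\xi(\Lambda,x)\rvert^{-M}, \]
contradicting (5.9). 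There is no real obstacle here: all the substantive work lives in Lemma 5.6, and once (5.7), (5.8), and (5.9) are in hand, Corollary 5.10 is a purely formal matrix-algebra consequence in the ultrametric setting.
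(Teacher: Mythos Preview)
Your proposal is correct and matches the paper's own argument essentially line for line: the paper derives (5.9) from (5.8), uses the cofactor formula together with (5.7) and (5.8) to get the upper bound, and then notes that a strict inequality would contradict (5.9). There is nothing to add.
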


The proof of Lemma 5.6 will require several steps.  We first consider a matrix constructed from $\theta(\Lambda,x)$ (Eq.~(3.16)):
\[ {\mathbb M}_\theta(\Lambda) = \big( \theta_{p{\bf a}_i-{\bf a}_j}(\Lambda)\big)_{i,j=1}^M. \]
From (3.17) and (3.18) we have explicit formulas for these matrix entries:
\[ \theta_{p{\bf a}_i-{\bf a}_j}(\Lambda) = \sum_{\substack{\nu\in {\mathbb N}^N\\ \sum_{k=1}^N \nu_k{\bf a}_k = p{\bf a}_i-{\bf a}_j}} \bigg(\prod_{k=1}^N \theta_{\nu_k}\bigg) \Lambda^\nu. \]
Furthermore, since the last coordinate of $p{\bf a}_i-{\bf a}_j$ equals $p-1$, each $\nu_k$ in this summation is $\leq p-1$, so from their definition $\theta_{\nu_k} = \gamma_0^{\nu_k}/\nu_k!$.  Since $\sum_{k=1}^N \nu_k = p-1$, we have
\[ \theta_{p{\bf a}_i-{\bf a}_j}(\Lambda) = \sum_{\substack{\nu\in {\mathbb N}^N\\ \sum_{k=1}^N \nu_k{\bf a}_k = p{\bf a}_i-{\bf a}_j}} \frac{\gamma_0^{p-1} \Lambda^\nu}{\nu_1!\cdots\nu_N!}. \]
Thus 
\begin{equation}
{\mathbb M}_\theta(\Lambda) = \gamma_0^{p-1}H(\Lambda),
\end{equation}
 where $H(\Lambda)$ is given in Eq.~(1.4), and
\begin{equation}
C(\Lambda)^{-p} {\mathbb M}_\theta(\Lambda) C(\Lambda) = \gamma_0^{p-1}B(\Lambda),
\end{equation}
where $B(\Lambda)$ is given in Eq.~(1.5).  It now follows from the discussion in Section~1 that $C(\Lambda)^{-p}{\mathbb M}_\theta(\Lambda) C(\Lambda)$ is an invertible element of~$R_{\bf 0}'$.  Furthermore, we have
\begin{equation}
\det C(\Lambda)^{-p} {\mathbb M}_\theta(\Lambda) C(\Lambda) = \gamma_0^{(p-1)M} D(\Lambda).  
\end{equation}
And since $\lvert D(\Lambda)\rvert = 1$ (since $D(\Lambda)$ has constant term 1 and integral coefficients) we get
\begin{equation}
\lvert \det C(\Lambda)^{-p} {\mathbb M}_\theta(\Lambda) C(\Lambda) \rvert = \lvert p^M\rvert.
\end{equation}

With $\xi(\Lambda,x)$ as in (5.1), we rewrite (5.2) as
\[ \alpha^*\big(\xi(\Lambda,x)\big) = \big( \eta^{(1)}(\Lambda,x),\dots,\eta^{(M)}(\Lambda,x)\big), \]
where (see (4.2) and (4.3))
\begin{equation}
\eta^{(i)}(\Lambda,x) = \sum_{\rho\in {\mathcal M}_-} \eta^{(i)}_{\rho}(\Lambda)\gamma_0^{\rho_{n+1}}x^{\rho}
\end{equation}
with
\begin{equation}
\eta^{(i)}_{\rho}(\Lambda) = \sum_{\substack{u\in{\mathbb N}A,\,\nu\in {\mathcal M}_-\\ u+p\nu=\rho}}\gamma_0^{-\rho_{n+1}+\nu_{n+1}}\theta_u(\Lambda)\xi^{(i)}_{\nu}(\Lambda^p).
\end{equation}
We then have
\begin{equation}
{\mathbb M}\big(\alpha^*(\xi(\Lambda,x))\big) = \big( \Lambda_j\eta^{(i)}_{-{\bf a}_j}(\Lambda)\big)_{i,j=1}^M.
\end{equation}

If $\nu\neq -{\bf a}_1,\dots,-{\bf a}_M$, then $\nu_{n+1}\leq -2$, so by (5.16) we may write
\begin{multline}
\eta^{(i)}_{-{\bf a}_j}(\Lambda) = \sum_{k=1}^M \theta_{p{\bf a}_k-{\bf a}_j}(\Lambda)\xi^{(i)}_{-{\bf a}_k}(\Lambda^p) \\
 + \sum_{\substack{u\in{\mathbb N}A,\,\nu\in {\mathcal M}_-\\ u+p\nu=-{\bf a}_j\\\nu_{n+1}\leq -2}}\gamma_0^{-\rho_{n+1}+\nu_{n+1}}\theta_u(\Lambda)\xi^{(i)}_{\nu}(\Lambda^p).
\end{multline}
We write ${\mathbb M}\big(\alpha^*(\xi(\Lambda,x))\big) = {\mathbb M}^{(1)}(\Lambda) + {\mathbb M}^{(2)}(\Lambda)$, where
\begin{equation}
{\mathbb M}^{(1)}_{ij}(\Lambda) = \Lambda_j\sum_{k=1}^M \theta_{p{\bf a}_k-{\bf a}_j}(\Lambda)\xi^{(i)}_{-{\bf a}_k}(\Lambda^p)
\end{equation}
and
\begin{equation}
{\mathbb M}^{(2)}_{ij}(\Lambda) = \Lambda_j\sum_{\substack{u\in{\mathbb N}A,\,\nu\in {\mathcal M}_-\\ u+p\nu=-{\bf a}_j\\\nu_{n+1}\leq -2}}\gamma_0^{-\rho_{n+1}+\nu_{n+1}}\theta_u(\Lambda)\xi^{(i)}_{\nu}(\Lambda^p).
\end{equation}
It follows from (5.19), (5.20), and a short calculation that
\begin{equation}
{\mathbb M}\big(\alpha^*(\xi(\Lambda,x))\big) = {\mathbb M}\big(\xi(\Lambda^p,x)\big)C(\Lambda)^{-p} {\mathbb M}_\theta(\Lambda)C(\Lambda) + {\mathbb M}^{(2)}(\Lambda).
\end{equation}

\begin{proof}[Proof of Lemma 5.6]
For notational convenience set 
\[ \widetilde{\mathbb M}(\Lambda) = {\mathbb M}\big(\xi(\Lambda^p,x)\big)C(\Lambda)^{-p} {\mathbb M}_\theta(\Lambda)C(\Lambda) \]
so that (5.21) can be rewritten as
\begin{equation}
{\mathbb M}\big(\alpha^*(\xi(\Lambda,x))\big) = \widetilde{\mathbb M}(\Lambda) + {\mathbb M}^{(2)}(\Lambda).
\end{equation}
From (5.12) we get
\[ \lvert \widetilde{\mathbb M}(\Lambda) \rvert \leq \lvert p{\mathbb M}(\xi(\Lambda^p,x))\rvert  = \lvert p{\mathbb M}(\xi(\Lambda,x))\rvert \]
and from (5.13)
\[ \lvert \det \widetilde{\mathbb M}(\Lambda) \rvert = \lvert p^M\det {\mathbb M}(\xi(\Lambda^p,x))\rvert = \lvert p^M\det {\mathbb M}(\xi(\Lambda,x))\rvert. \]
Applying our hypotheses gives
\begin{equation}
\lvert \widetilde{\mathbb M}(\Lambda) \rvert \leq \lvert p\xi(\Lambda,x)\rvert 
\end{equation}
and
\begin{equation}
 \lvert \det \widetilde{\mathbb M}(\Lambda) \rvert = \lvert p \xi(\Lambda,x)\rvert^M.
\end{equation}
And since the equality in (5.24) would fail if the inequality in (5.23) were strict, we must have
\begin{equation}
\lvert \widetilde{\mathbb M}(\Lambda) \rvert = \lvert p\xi(\Lambda,x)\rvert.
\end{equation}

The matrix $\widetilde{\mathbb M}(\Lambda)$ is invertible by (5.12) and our hypotheses, and (5.24) implies
\begin{equation}
\lvert \det\widetilde{\mathbb M}(\Lambda)^{-1}\rvert = \lvert p \xi(\Lambda,x)\rvert^{-M}.
\end{equation}
Arguing as in the derivation of Corollary 5.10 from (5.9) then gives
\begin{equation}
\lvert \widetilde{\mathbb M}(\Lambda)^{-1} \rvert = \frac{1}{\lvert p\xi(\Lambda,x)\rvert}.
\end{equation}

Rewrite (5.22) as
\begin{equation}
{\mathbb M}\big(\alpha^*(\xi(\Lambda,x))\big) = \widetilde{\mathbb M}(\Lambda)\big( I + \widetilde{\mathbb M}(\Lambda)^{-1}{\mathbb M}^{(2)}(\Lambda)\big).
\end{equation}
Estimate (4.7) implies that 
\begin{equation}
\lvert {\mathbb M}^{(2)}(\Lambda)\rvert\leq \lvert p^2\xi(\Lambda,x)\rvert,
\end{equation}
so by (5.27) 
\begin{equation}
\lvert  \widetilde{\mathbb M}(\Lambda)^{-1}{\mathbb M}^{(2)}(\Lambda)\rvert \leq\lvert p\rvert.
\end{equation}
It follows from (5.30) that $I + \widetilde{\mathbb M}(\Lambda)^{-1}{\mathbb M}^{(2)}(\Lambda)$ is invertible, so by (5.28) the invertibility of $\widetilde{\mathbb M}(\Lambda)$ implies the invertibility of 
${\mathbb M}\big(\alpha^*(\xi(\Lambda,x))\big)$.  Estimate (5.30) implies that
\begin{equation}
\big\lvert \det \big(I + \widetilde{\mathbb M}(\Lambda)^{-1}{\mathbb M}^{(2)}(\Lambda)\big) \big\rvert
 = 1,
\end{equation}
so (5.8) follows from (5.28) and (5.24).
\end{proof}

\section{Contraction mapping}

We use the Dwork-Frobenius operator to construct a contraction mapping on a subset of $S^M$.  Finding the fixed point of this contraction mapping will be the crucial step in proving the first assertion of Theorem~2.11. 

Put
\[ T = \{ \xi(\Lambda,x)\in S^M\mid \text{${\mathbb M}(\xi(\Lambda,x)) = I$ and $\lvert \xi(\Lambda,x)\rvert = 1$}\} \]
and put $T' = T\cap (S')^M$.  Note that $T$ and $T'$ are closed in the topology on $S^M$.  Elements of $T$ satisfy the hypotheses of Lemma~5.6.  By that result, if $\xi(\Lambda,x)\in T$, then ${\mathbb M}(\alpha^*(\xi(\Lambda,x)))$ is invertible, so we may define
\[ \phi\big(\xi(\Lambda,x)\big) := {\mathbb M}(\alpha^*(\xi(\Lambda,x)))^{-1} \alpha^*\big(\xi(\Lambda,x)\big), \]
where we regard $\alpha^*(\xi(\Lambda,x))$ on the right-hand side as a column vector for the purpose of matrix multiplication.  By Eq.~(5.5) we have
\begin{equation}
{\mathbb M}\big(\phi(\xi(\Lambda,x))\big) = I.
\end{equation}
Equation (4.6) and Corollary 5.10 imply that $\lvert \phi(\xi(\Lambda,x))\rvert\leq 1$, so in fact
\begin{equation}
\lvert \phi(\xi(\Lambda,x))\rvert = 1
\end{equation}
by (6.1).  Equations (6.1) and (6.2) show that $\phi(T)\subseteq T$ and $\phi(T')\subseteq T'$.  

\begin{proposition}
The operator $\phi$ is a contraction mapping on $T$.  More precisely, if $\xi^{(1)}(\Lambda,x),\xi^{(2)}(\Lambda,x)\in T$, then
\[ \big\lvert \phi\big(\xi^{(1)}(\Lambda,x)\big) - \phi\big(\xi^{(2)}(\Lambda,x)\big)\big\rvert \leq \lvert p\rvert \cdot\lvert \xi^{(1)}(\Lambda,x) - \xi^{(2)}(\Lambda,x)\rvert. \]
\end{proposition}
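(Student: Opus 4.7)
The plan is to write $\phi(\xi^{(1)})-\phi(\xi^{(2)})$ as a telescoping sum and estimate each piece using the results already in place, being careful to exploit the hypothesis $\mathbb{M}(\xi^{(k)}) = I$ in order to invoke Corollary~4.7 (which gives a gain of $|p|^2$) instead of Proposition~4.5 (which only gives $|p|$).

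Set $\eta^{(k)} = \alpha^*(\xi^{(k)})$ and $N^{(k)} = \mathbb{M}(\eta^{(k)})$ for $k=1,2$, so that $\phi(\xi^{(k)}) = (N^{(k)})^{-1}\eta^{(k)}$. I would first write
\[ \phi(\xi^{(1)}) - \phi(\xi^{(2)}) = (N^{(1)})^{-1}\bigl(\eta^{(1)}-\eta^{(2)}\bigr) + \bigl((N^{(1)})^{-1}-(N^{(2)})^{-1}\bigr)\eta^{(2)} \]
and record the standard identity $(N^{(1)})^{-1}-(N^{(2)})^{-1} = (N^{(1)})^{-1}(N^{(2)}-N^{(1)})(N^{(2)})^{-1}$.

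Next I would establish the crucial input. Since $\mathbb{M}(\xi^{(k)}) = I$ for $k=1,2$, the components of $\xi^{(1)}-\xi^{(2)}$ have vanishing coefficient at each $\nu = -\mathbf{a}_j$ for $j=1,\dots,M$. By $\mathbb{C}_p$-linearity of $\alpha^*$, Corollary~4.7 applies componentwise to give
\[ \bigl|\eta^{(1)}-\eta^{(2)}\bigr| = \bigl|\alpha^*(\xi^{(1)}-\xi^{(2)})\bigr| \leq |p^{2}|\cdot\bigl|\xi^{(1)}-\xi^{(2)}\bigr|. \]
By linearity of $\mathbb{M}$ together with the trivial estimate $|\mathbb{M}(\zeta)|\leq|\zeta|$, the same bound holds for $N^{(1)}-N^{(2)} = \mathbb{M}(\eta^{(1)}-\eta^{(2)})$. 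Also, $\xi^{(k)}\in T$ satisfies the hypotheses of Lemma~5.6 with $|\xi^{(k)}|=1$, so Corollary~5.10 gives $|(N^{(k)})^{-1}| = 1/|p|$, while Proposition~4.5 gives $|\eta^{(2)}|\leq|p|$.

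Now I would put the pieces together using the submultiplicativity $|AB|\leq|A|\cdot|B|$ for the max-entry norm under the ultrametric inequality. The first term is bounded by $\tfrac{1}{|p|}\cdot|p^{2}|\cdot|\xi^{(1)}-\xi^{(2)}| = |p|\cdot|\xi^{(1)}-\xi^{(2)}|$. For the second term, the identity above combined with the three estimates yields
\[ \bigl|(N^{(1)})^{-1}-(N^{(2)})^{-1}\bigr| \leq \tfrac{1}{|p|}\cdot|p^{2}|\cdot|\xi^{(1)}-\xi^{(2)}|\cdot\tfrac{1}{|p|} = |\xi^{(1)}-\xi^{(2)}|, \]
whence $\bigl|((N^{(1)})^{-1}-(N^{(2)})^{-1})\eta^{(2)}\bigr|\leq|p|\cdot|\xi^{(1)}-\xi^{(2)}|$. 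The ultrametric inequality combines the two bounds into the desired conclusion.

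The main obstacle is the factor $|p|$: the naive use of Proposition~4.5 would produce a bound of $|\xi^{(1)}-\xi^{(2)}|$ with no contraction, because the factor $|p|$ lost in $|(N^{(k)})^{-1}|=1/|p|$ exactly cancels the factor $|p|$ gained in the Frobenius estimate. Recognizing that the difference $\xi^{(1)}-\xi^{(2)}$ has vanishing $-\mathbf{a}_j$-components, and thereby earning the extra $|p|$ from Corollary~4.7, is what makes the argument go through. Everything else is a straightforward non-archimedean bookkeeping exercise.
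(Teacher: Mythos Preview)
Your proof is correct and follows essentially the same route as the paper: the same telescoping decomposition (the paper writes it in one line via the identity $(N^{(1)})^{-1}-(N^{(2)})^{-1}=-(N^{(1)})^{-1}(N^{(1)}-N^{(2)})(N^{(2)})^{-1}$ rather than in two steps), the same use of the $|p^2|$ gain on $\alpha^*(\xi^{(1)}-\xi^{(2)})$ from the vanishing of the $-\mathbf{a}_j$-coefficients, and the same estimates from Corollary~5.10 and Proposition~4.5. The only slip is a reference number: the $|p^2|$ estimate is Corollary~4.8 in the paper, not~4.7.
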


\begin{proof}
For notational convenience we sometimes write $\eta^{(i)}(\Lambda,x) = \alpha^*(\xi^{(i)}(\Lambda,x))$ for $i=1,2$.  Then
\begin{multline}
\phi\big(\xi^{(1)}(\Lambda,x)\big) - \phi\big(\xi^{(2)}(\Lambda,x)) = \\
{\mathbb M}\big(\eta^{(1)}(\Lambda,x)\big)^{-1} \eta^{(1)}(\Lambda,x)-{\mathbb M}\big(\eta^{(2)}(\Lambda,x)\big)^{-1} \eta^{(2)}(\Lambda,x) = \\
 {\mathbb M}\big(\eta^{(1)}(\Lambda,x)\big)^{-1}\bigg(\eta^{(1)}(\Lambda,x)-\eta^{(2)}(\Lambda,x)\bigg) \\
-{\mathbb M}\big(\eta^{(1)}(\Lambda,x)\big)^{-1}\bigg({\mathbb M}\big(\eta^{(1)}(\Lambda,x) - \eta^{(2)}(\Lambda,x)\big)\bigg) {\mathbb M}\big(\eta^{(2)}(\Lambda,x)\big)^{-1}\eta^{(2)}(\Lambda,x).
\end{multline}

The difference $\xi^{(1)}(\Lambda,x)-\xi^{(2)}(\Lambda,x)$ satisfies the hypothesis of Corollary 4.8, so
\begin{equation}
 \big\lvert \alpha^*\big(\xi^{(1)}(\Lambda,x)-\xi^{(2)}(\Lambda,x)\big)\big\rvert\leq \lvert p^2\rvert\cdot \lvert\xi^{(1)}(\Lambda,x)-\xi^{(2)}(\Lambda,x)\rvert. 
\end{equation}
By Corollary 5.10 we have
\begin{equation}
\lvert {\mathbb M}\big(\alpha^*(\xi^{(1)}(\Lambda,x))\big)^{-1} \rvert =  \lvert {\mathbb M}\big(\alpha^*(\xi^{(2)}(\Lambda,x))\big)^{-1} \rvert=1/\lvert p\rvert, 
\end{equation}
so
\begin{equation}
\bigg\lvert  {\mathbb M}\big(\eta^{(1)}(\Lambda,x)\big)^{-1}\bigg(\eta^{(1)}(\Lambda,x)-\eta^{(2)}(\Lambda,x)\bigg)\bigg\rvert
\leq \lvert p\rvert\cdot \lvert\xi^{(1)}(\Lambda,x)-\xi^{(2)}(\Lambda,x)\rvert.
\end{equation}
By Proposition 4.5 we have
\[ \big\lvert \alpha^*\big(\xi^{(2)}(\Lambda,x)\big)\big\rvert\leq \lvert p\rvert, \]
and by (6.5) we have 
\[ \big\lvert {\mathbb M}\big(\alpha^*\big(\xi^{(1)}(\Lambda,x) - \xi^{(2)}(\Lambda,x)\big)\big)\big\rvert\leq
\lvert p^2\rvert\cdot \lvert\xi^{(1)}(\Lambda,x)-\xi^{(2)}(\Lambda,x)\rvert, \]
so
\begin{multline}
\bigg\lvert{\mathbb M}\big(\eta^{(1)}(\Lambda,x)\big)^{-1}\bigg({\mathbb M}\big(\eta^{(1)}(\Lambda,x) - \eta^{(2)}(\Lambda,x)\big)\bigg) {\mathbb M}\big(\eta^{(2)}(\Lambda,x)\big)^{-1}\eta^{(2)}(\Lambda,x)\bigg\rvert \\
\leq \lvert p\rvert\cdot \lvert\xi^{(1)}(\Lambda,x)-\xi^{(2)}(\Lambda,x)\rvert.
\end{multline}
The assertion of the proposition  now follows from (6.4), (6.7), and (6.8).
\end{proof}

By a well-known theorem, the contraction mapping $\phi$ has a unique fixed point, and since $\phi$ is stable on $T'$ that fixed point lies in $T'$.  In the next section we discuss certain hypergeometric series and their $p$-adic relatives.  These $p$-adic relatives will be used to describe explicitly this fixed point.

\section{$A$-hypergeometric series}

The entries of the matrix $F(\Lambda)$ are $A$-hypergeometric in nature.  The purpose of this section is to recall their construction and to introduce some related series that satisfy better $p$-adic estimates.  

Let $L\subseteq{\mathbb Z}^{N}$ be the lattice of relations on the set $A$:
\[ L= \bigg\{l=(l_1,\dots,l_N)\in{\mathbb Z}^N\mid \sum_{j=1}^N l_j{\bf a}_j = {\bf 0}\bigg\}. \]
For each $l=(l_1,\dots,l_N)\in L$, we define a partial differential operator $\Box_l$ in variables $\{\Lambda_j\}_{j=1}^N$ by
\begin{equation}
\Box_l = \prod_{l_j>0} \bigg(\frac{\partial}{\partial\Lambda_j}\bigg)^{l_j} - \prod_{l_j<0} \bigg(\frac{\partial}{\partial\Lambda_j}\bigg)^{-l_j}.
\end{equation}
For $\beta = (\beta_0,\beta_1,\dots,\beta_{n+1})\in{\mathbb C}^{n+2}$, the corresponding Euler (or homogeneity) operators are defined by
\begin{equation}
 Z_i = \sum_{j=1}^N a_{ij}\Lambda_j\frac{\partial}{\partial\Lambda_j} - \beta_i 
\end{equation}
for $i=0,\dots,n+1$.  The {\it $A$-hypergeometric system with parameter $\beta$\/} consists of Equations (7.1) for $l\in L$ and (7.2) for $i=0,1,\dots,n+1$.

Consider the formal series 
\[ q(t) = \sum_{l=0}^\infty (-1)^l l!t^{-l-1}. \]
Let $i\in\{1,\dots,M\}$.  The $A$-hypergeometric series of interest arise as the coefficients of $\gamma_0^{u_{n+1}}x^u$ in the expression
\begin{equation}
{F}^{(i)}(\Lambda,x) := \delta_-\bigg(q(\gamma_0\Lambda_i x^{{\bf a}_i}) \prod_{\substack{k=1\\ k\neq i}}^N \exp(\gamma_0\Lambda_kx^{{\bf a}_k})\bigg).
\end{equation}
For $u\in {\mathcal M}_-$, let $F_u^{(i)}(\Lambda)$ be the coefficient of $\gamma_0^{u_{n+1}}x^u$ in~(7.3) and write
\begin{equation}
{F}^{(i)}(\Lambda,x) = \sum_{u\in {\mathcal M}_-} F^{(i)}_u(\Lambda)\gamma_0^{u_{n+1}}x^u.
\end{equation}
If we set
\[ L_{i,u} = \bigg\{ l=(l_1,\dots,l_N)\in{\mathbb Z}^N\mid \text{$\sum_{k=1}^N l_k{\bf a}_k = u$, $l_i\leq 0$, and $l_k\geq 0$ for $k\neq i$}\bigg\},  \]
then from (7.3)
\begin{equation}
F_u^{(i)}(\Lambda) =  \sum_{l=(l_1,\dots,l_N)\in L_{i,u}} \frac{(-1)^{-l_i-1}(-l_i-1)!}{\displaystyle \prod_{\substack{k=1\\ k\neq i}}^N l_k!} \prod_{k=1}^N \Lambda_k^{l_k}.
\end{equation}

It follows from the definition of $q(t)$ that for $i=1,\dots,M$
\[ \frac{\partial}{\partial\Lambda_i} q(\gamma_0\Lambda_ix^{{\bf a}_i}) = \gamma_0x^{{\bf a}_i}q(\gamma_0\Lambda_ix^{{\bf a}_i})-\frac{1}{\Lambda_i}. \]
A straightforward calculation from (7.3) gives
\begin{equation}
\frac{\partial}{\partial\Lambda_k}{F}^{(i)}(\Lambda,x) = \delta_-\big(\gamma_0x^{{\bf a}_k}{F}^{(i)}(\Lambda,x)\big)
\end{equation}
for $k=1,\dots,N$.  Equivalently, for $u\in {\mathcal M}_-$, we have by (7.6)
\begin{equation}
\frac{\partial}{\partial\Lambda_k}F^{(i)}_u(\Lambda) = F^{(i)}_{u-{\bf a}_k}(\Lambda).
\end{equation}
More generally, if $l_1,\dots,l_N$ are nonnegative integers, it follows that
\begin{equation}
\prod_{k=1}^N \bigg(\frac{\partial}{\partial\Lambda_k}\bigg)^{l_k} F^{(i)}_u(\Lambda) = F_{u-\sum_{k=1}^N l_k{\bf a}_k}^{(i)}(\Lambda).
\end{equation}
In particular, it follows from the definition of the box operators (7.1) that
\begin{equation}
\Box_l\big(F^{(i)}_u(\Lambda)\big) = 0\quad\text{for all $l\in L$, $i=1,\dots,M$, and all $u\in {\mathcal M}_-$.}
\end{equation}
The condition on the summation in (7.5) implies that $F^{(i)}_u(\Lambda)$ satisfies the Euler operators (7.2) with parameter $\beta = u$.  
In summary:
\begin{lemma}
For $i=1,\dots,M$ and $u\in{\mathcal M}_-$, the series $F^{(i)}_u(\Lambda)$ is a solution of the $A$-hypergeometric system with parameter $\beta = u$.
\end{lemma}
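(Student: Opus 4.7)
The plan is to verify separately that $F^{(i)}_u(\Lambda)$ is annihilated by every box operator $\Box_l$ with $l\in L$ and by every Euler operator $Z_j$ with parameter $\beta=u$, since these are the two families defining the $A$-hypergeometric system.

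For the Euler operators I would argue directly from the explicit expansion (7.5). Each monomial $\prod_{k=1}^N \Lambda_k^{l_k}$ appearing in $F^{(i)}_u(\Lambda)$ is indexed by some $l\in L_{i,u}$, and the defining relation $\sum_{k=1}^N l_k{\bf a}_k=u$ gives, coordinate-wise, $\sum_{k=1}^N a_{jk}l_k=u_j$ for each $j=0,\dots,n+1$. Since this monomial is an eigenvector of $\sum_{k=1}^N a_{jk}\Lambda_k(\partial/\partial\Lambda_k)$ with eigenvalue $\sum_{k=1}^N a_{jk}l_k=u_j$, the operator $Z_j=\sum_{k=1}^N a_{jk}\Lambda_k(\partial/\partial\Lambda_k)-u_j$ kills it term by term.

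For the box operators I would invoke (7.11), which iterates (7.9) to give $\prod_{k=1}^N (\partial/\partial\Lambda_k)^{m_k} F^{(i)}_u(\Lambda)=F^{(i)}_{u-\sum_k m_k{\bf a}_k}(\Lambda)$ for every $(m_k)\in{\mathbb N}^N$. Splitting a relation $l\in L$ into its positive and negative parts and applying each factor of $\Box_l$ in (7.1) then yields, respectively, $F^{(i)}_{u-\sum_{l_j>0}l_j{\bf a}_j}(\Lambda)$ and $F^{(i)}_{u+\sum_{l_j<0}l_j{\bf a}_j}(\Lambda)$. The defining relation $\sum_k l_k{\bf a}_k={\bf 0}$ forces these two parameter shifts to agree, so the two terms cancel and $\Box_l F^{(i)}_u(\Lambda)=0$.

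I expect no substantive obstacle, since both verifications are essentially contained in the discussion preceding the statement, and (7.9)--(7.11) reduce the box-operator assertion to the very definition of $L$. The only step worth pausing over is (7.9) itself, which follows from the fact that $\partial/\partial\Lambda_k$ commutes with $\delta_-$ (since $\delta_-$ truncates only in $x$) together with the identity $(\partial/\partial\Lambda_i)\, q(\gamma_0\Lambda_ix^{{\bf a}_i})=\gamma_0x^{{\bf a}_i}q(\gamma_0\Lambda_ix^{{\bf a}_i})-1/\Lambda_i$, whose constant-in-$x$ remainder lies outside the support of $\delta_-$.
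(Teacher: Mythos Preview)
Your argument is correct and is essentially the paper's own: the discussion preceding the lemma establishes the iterated derivative identity (the paper's (7.8), which you cite as ``(7.11)'') and deduces the box-operator annihilation (7.9) from it, then reads off the Euler-operator annihilation directly from the homogeneity visible in (7.5), recording the lemma as a summary of what was just shown. Aside from your equation labels being shifted by one (what you call (7.9) and (7.11) are the paper's (7.7) and (7.8)), there is nothing to add.
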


\begin{lemma}
For $i=1,\dots,M$ and $u\in {\mathcal M}_-$, the series $F^{(i)}_u(\Lambda)$ lies in~$R_u$.
\end{lemma}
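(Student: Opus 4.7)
The goal is to show that $F_u^{(i)}(\Lambda)$, as defined in (7.5), is a well-defined element of the graded piece $R_u$. Unpacking the definitions, this amounts to two conditions: that the series is $\mathcal{M}$-homogeneous of degree $u$, and that it lies in the completion $R$ of $\tilde{R}$ under the sup norm on $\mathbb{B}$. The homogeneity of degree $u$ is immediate from the summation condition $\sum_k l_k{\bf a}_k = u$ built into the defining set $L_{i,u}$, so the substance of the lemma is the membership of $F_u^{(i)}(\Lambda)$ in $R$.

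My first step would be to bound the coefficients $p$-adically. Since the last coordinate of each ${\bf a}_k$ equals $1$, the relation $\sum_k l_k{\bf a}_k = u$ forces $\sum_k l_k = u_{n+1}$, hence $-l_i - 1 = \sum_{k\neq i} l_k + (-u_{n+1}-1)$. The coefficient of $\Lambda^l$ in (7.5) can then be rewritten as $(-u_{n+1}-1)!$ times the multinomial coefficient
\[
\binom{-l_i-1}{l_1,\dots,l_{i-1},l_{i+1},\dots,l_N,\,-u_{n+1}-1},
\]
which is a rational integer (note that $-u_{n+1}-1\geq 0$ because $u\in\mathcal{M}_-$, and that $l_i\leq -1$ for every $l\in L_{i,u}$). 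In particular the coefficients of $F_u^{(i)}(\Lambda)$ are uniformly bounded by $1$ in $p$-adic absolute value.

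Next I would exhibit $F_u^{(i)}(\Lambda)$ as a finite element of $\tilde{R} = R_E[\Lambda_1^{\pm 1},\dots,\Lambda_N^{\pm 1}]$. The index set $L_{i,u}$ is closed under translation by $L_i$, and it is the set of integer points of a rational polyhedron whose recession cone is precisely the real cone generated by $L_i$. Gordan's lemma, applied in the form that gives finite generation of $L_{i,u}$ as an $L_i$-module, produces a decomposition $L_{i,u} = \bigcup_{s=1}^r (l^{(s)} + L_i)$ with $l^{(s)} \in L_{i,u}$. Passing to a disjoint refinement $L_{i,u} = \bigsqcup_{s=1}^r S_s$ with $S_s \subseteq l^{(s)} + L_i$ and grouping the terms of (7.5) accordingly yields
\[
F_u^{(i)}(\Lambda) = \sum_{s=1}^r \Lambda^{l^{(s)}} G_s(\Lambda),
\]
where each $G_s(\Lambda)$ has exponents in $L_i \subseteq E$ and bounded coefficients, hence $G_s \in R_E$. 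This exhibits $F_u^{(i)}$ as an element of $\tilde{R} \subseteq R$, and combined with the degree-$u$ homogeneity delivers $F_u^{(i)}(\Lambda)\in R_u$.

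The chief obstacle I expect is the polyhedral decomposition step. The recession cone identification uses nothing beyond the definitions, and Gordan's lemma itself is standard, but arranging the decomposition so that the grouping in (7.5) is unambiguous and the resulting $G_s$ manifestly lie in $R_E$ takes a bit of care. Everything else, including the coefficient bound and the homogeneity, reduces to direct computation from the definitions and the condition $u\in\mathcal{M}_-$.
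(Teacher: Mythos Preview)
Your argument is correct, but the paper's proof is considerably shorter because it shows that a \emph{single} translate of $L_i$ already contains $L_{i,u}$, so no Gordan-type decomposition is needed. Concretely, since $-u$ has all positive coordinates and $A$ consists of \emph{all} degree-$d$ monomials, one can write $-u=\sum_{k=1}^N l_k{\bf a}_k$ with every $l_k\geq 0$. For any $l'=(l'_1,\dots,l'_N)\in L_{i,u}$ one then has $\sum_k(l_k+l'_k){\bf a}_k=0$, and $l_k+l'_k\geq 0$ for $k\neq i$ is obvious; the only nontrivial check is $l_i+l'_i\leq 0$, which follows from the last-coordinate bookkeeping
\[
l_i\;\leq\;\sum_{k}l_k=-u_{n+1},\qquad l'_i\;=\;u_{n+1}-\sum_{k\neq i}l'_k\;\leq\;u_{n+1}.
\]
Hence $L_{i,u}\subseteq -l+L_i$ and $F^{(i)}_u(\Lambda)\in\Lambda^{-l}R_E\subseteq\tilde{R}$ in one stroke.

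Your route via the recession cone and Gordan's lemma is perfectly valid and would survive in settings where $A$ is not the full set of lattice points (so that a single translate might not suffice), but here it is extra machinery. Also note that your multinomial computation, which you use only to bound the coefficients, is exactly the content of the paper's next lemma (Lemma~7.13 on divisibility by $(-u_{n+1}-1)!$); for Lemma~7.11 itself the paper only needs that the coefficients are integers, hence $p$-adically bounded, which is immediate once one has the single-translate containment.
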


\begin{proof}
The homogeneity condition is clear from (7.5).  We prove that $F^{(i)}_u(\Lambda)$ lies in $\tilde{R}$.  

Let $u\in {\mathcal M}_-$.  By our choice of the set $A$, we can write $-u=\sum_{k=1}^N l_k{\bf a}_k$ with the $l_k$ in 
${\mathbb Z}_{\geq 0}$.  Suppose that $(l'_1,\dots,l'_N)\in L_{i,u}$.  Then 
\[ \sum_{k=1}^N (l_k+l'_k){\bf a}_k = {\bf 0}. \]
Furthermore, we have $l_k+l'_k\geq 0$ for $k\neq i$ and $l_i+l'_i\leq 0$, so 
\[ (l_1+l'_1,\dots,l_N+l'_N)\in L_i. \]
It follows that $L_{i,u}\subseteq -(l_1,\dots,l_N)+L_i$, hence $F_u^{(i)}(\Lambda)\in \Lambda_1^{-l_1}\cdots \Lambda_N^{-l_N} R_E$.  
\end{proof}

{\bf Remark.}  We note the relation between the $F^{(i)}_u(\Lambda)$ and the $F_{ij}(\Lambda)$ defined in the Introduction: for $i,j=1,\dots,M$,
\begin{equation}
F_{ij}(\Lambda) = \Lambda_j F^{(i)}_{-{\bf a}_j}(\Lambda)= {\mathbb M}\big(F(\Lambda,x)\big).
\end{equation}

\begin{lemma}
For $i=1,\dots,M$ and $u\in {\mathcal M}_-$, the coefficients of the series $F^{(i)}_u(\Lambda)$ are integers divisible by 
$(-u_{n+1}-1)!$.
\end{lemma}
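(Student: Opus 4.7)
My plan is to read off the coefficients directly from the explicit formula (7.5) and rewrite them as $(-u_{n+1}-1)!$ times a product of standard multinomial/binomial coefficients.

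Fix $i$ and $u$, and let $l=(l_1,\dots,l_N)\in L_{i,u}$. Since the last coordinate of each ${\bf a}_k$ equals $1$, the equation $\sum_k l_k {\bf a}_k = u$ forces $\sum_{k=1}^N l_k = u_{n+1}$, hence
\[ \sum_{k\neq i} l_k = u_{n+1} - l_i. \]
Set $m = -l_i$ and $s = -u_{n+1}$; then $m,s\geq 1$ (because $u\in{\mathcal M}_-$ forces $s\geq 1$, while $l_i\leq 0$ and the nonnegativity of the $l_k$ for $k\neq i$ combined with $\sum_{k\neq i}l_k = m-s$ force $m\geq s\geq 1$). The absolute value of the coefficient of $\prod_k\Lambda_k^{l_k}$ in (7.5) is
\[ \frac{(m-1)!}{\prod_{k\neq i} l_k!}. \]

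The main step is the factorization identity
\[ \frac{(m-1)!}{\prod_{k\neq i} l_k!} = (s-1)! \binom{m-1}{s-1}\cdot \frac{(m-s)!}{\prod_{k\neq i} l_k!}. \]
This is immediate: $\binom{m-1}{s-1} = (m-1)!/((s-1)!(m-s)!)$, and $(m-s)!/\prod_{k\neq i} l_k!$ is the multinomial coefficient associated with the partition $\sum_{k\neq i}l_k = m-s$ (recall $l_k\geq 0$ for $k\neq i$), which is a nonnegative integer. Hence the coefficient is $(s-1)! = (-u_{n+1}-1)!$ times an integer, as claimed.

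There is no real obstacle here; the only point to verify is that $m\geq s$, which is precisely the condition $\sum_{k\neq i} l_k \geq 0$ coming from the definition of $L_{i,u}$, and that the summation in (7.5) is nonempty only when this holds. Thus termwise the coefficient of each monomial in $F^{(i)}_u(\Lambda)$ is an integer divisible by $(-u_{n+1}-1)!$, proving the lemma.
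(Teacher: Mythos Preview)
Your proof is correct and follows the same approach as the paper's: both use the identity $(-l_i-1)=(-u_{n+1}-1)+\sum_{k\neq i}l_k$ (equivalently $m-1=(s-1)+(m-s)$) to exhibit each coefficient as $(-u_{n+1}-1)!$ times a product of binomial and multinomial coefficients. The paper's proof simply leaves this last factorization implicit, whereas you write it out explicitly.
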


\begin{proof}
Since the last coordinate of each ${\bf a}_k$ equals $1$, the condition on the summation in (7.5) implies that
\[ \sum_{k=1}^N l_k = u_{n+1}, \]
i.~e., 
\[(-l_i-1)=  (-u_{n+1}-1) +\sum_{\substack{k=1\\ k\neq i}}^N l_k. \]
Applying this formula to the coefficients of the series in (7.5) implies the lemma.
\end{proof}

Although the series $F^{(i)}_u(\Lambda)$ are more natural to consider, we replace them in what follows by some related series $G^{(i)}_u(\Lambda)$ that are more closely tied to the Dwork-Frobenius operator $\alpha^*$.

For $i=1,\dots,M$, define 
\begin{align}
{G}^{(i)}(\Lambda,x) &= \delta_-\big({F}^{(i)}(\Lambda,x)\hat{\theta}_1(\Lambda,x)\big) \\
& =\delta_-\bigg(q(\gamma_0\Lambda_ix^{{\bf a}_i})\hat{\theta}_1(\Lambda_ix^{{\bf a}_i})\bigg(\prod_{\substack{k=1\\k\neq i}}^N \hat{\theta}(\Lambda_kx^{{\bf a}_k})\bigg)\bigg), \nonumber
\end{align}
where the second equality follows from (4.1).  If we write
\begin{equation}
{G}^{(i)}(\Lambda,x) = \sum_{u\in {\mathcal M}_-} G^{(i)}_u(\Lambda) \gamma_0^{u_{n+1}}x^u,
\end{equation}
then by (3.8) and (7.4) one gets from the first equality of (7.14)
\begin{equation}
G^{(i)}_u(\Lambda) = \sum_{\substack{u^{(1)}\in {\mathcal M}_-,u^{(2)}\in{\mathbb N}A\\ u^{(1)}+u^{(2)} = u}} F_{u^{(1)}}^{(i)}(\Lambda)\hat{\theta}_{1,u^{(2)}}(\Lambda).
\end{equation}
Note that as a series in $\Lambda$, $G^{(i)}_u(\Lambda)$ has exponents in $L_{i,u}$.  

We have an analogue of Lemma 7.13 for these series.
\begin{lemma}
For $i=1,\dots,M$ and $u\in {\mathcal M}_-$, $G^{(i)}_u(\Lambda)/(-u_{n+1}-1)!$ has $p$-integral coefficients.
\end{lemma}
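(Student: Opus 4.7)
The natural starting point is the convolution identity (7.16), which writes $G^{(i)}_u(\Lambda)$ as a finite sum of products $F^{(i)}_{u^{(1)}}(\Lambda)\hat{\theta}_{1,u^{(2)}}(\Lambda)$ over decompositions $u=u^{(1)}+u^{(2)}$ with $u^{(1)}\in{\mathcal M}_-$ and $u^{(2)}\in{\mathbb N}A$.  Expanding the two factors via (7.5) and (3.9), the coefficient of any monomial $\Lambda^l$ in $G^{(i)}_u(\Lambda)$ is a finite sum of terms of the form
\[ \bigl[\text{coefficient of a monomial in }F^{(i)}_{u^{(1)}}(\Lambda)\bigr]\cdot\prod_{j=1}^N\frac{\hat{\theta}_{1,k_j}}{k_j!}, \]
where $k_1,\dots,k_N$ are the components of the $\hat{\theta}_1$-part of the splitting of the exponent.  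By the non-archimedean property it suffices to bound the $p$-ordinal of each such term from below by ${\rm ord}\,(-u_{n+1}-1)!$.

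The two inputs are Lemma 7.13, which shows that the first factor is an integer divisible by $(-u^{(1)}_{n+1}-1)!$, and estimate (3.6), which gives ${\rm ord}\,\hat{\theta}_{1,k_j}\geq k_j(p-1)/p$.  Combining these with Legendre's formula ${\rm ord}\,m!=(m-s_p(m))/(p-1)$ and the identity $u_{n+1}=u^{(1)}_{n+1}+u^{(2)}_{n+1}=u^{(1)}_{n+1}+\sum_j k_j$ reduces the desired bound, after clearing common denominators, to the purely numerical inequality
\[ u^{(2)}_{n+1}(p-1)^2+p\Bigl[s_p(-u_{n+1}-1)+\sum_{j=1}^N s_p(k_j)-s_p(-u^{(1)}_{n+1}-1)\Bigr]\geq 0. \]

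The main (and really the only non-routine) obstacle is verifying this digit-sum inequality.  The key observations are that $-u^{(1)}_{n+1}-1=(-u_{n+1}-1)+u^{(2)}_{n+1}$, with both summands nonnegative integers since $u\in{\mathcal M}_-$ and $u^{(2)}\in{\mathbb N}A$, and that $u^{(2)}_{n+1}=\sum_j k_j$.  Two applications of the subadditivity $s_p(a+b)\leq s_p(a)+s_p(b)$ make the bracketed quantity nonnegative, and the remaining term $u^{(2)}_{n+1}(p-1)^2$ is trivially nonnegative.  This yields the required ordinal estimate term by term, so each coefficient of $G^{(i)}_u(\Lambda)$ lies in $(-u_{n+1}-1)!\cdot{\mathbb Z}_p$, and the lemma follows.
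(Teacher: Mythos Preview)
Your argument is correct, with one small inaccuracy: the coefficient of a fixed monomial $\Lambda^l$ in $G^{(i)}_u(\Lambda)$ is \emph{not} in general a finite sum.  In the decomposition $l=l'+k$ with $l'\in L_{i,u^{(1)}}$ and $k\in{\mathbb N}^N$, the components $k_j$ for $j\neq i$ are bounded by $l_j$, but $k_i$ can be arbitrarily large (with $l'_i=l_i-k_i$ correspondingly more negative).  This does not damage your proof, since the estimate (3.6) you already invoke forces the terms to tend to $0$ $p$-adically, and the ultrametric inequality then applies to the convergent series just as well as to a finite sum.  You should simply replace ``finite'' by ``$p$-adically convergent'' in that sentence.

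Your route via Legendre's formula and digit-sum subadditivity is correct but more laborious than the paper's.  The paper regroups (7.16) by inserting and removing the factor $(-u^{(1)}_{n+1}-1)!$, and then observes that the identity
\[
(-u^{(1)}_{n+1}-1)=(-u_{n+1}-1)+\sum_{j=1}^N k_j
\]
makes
\[
\frac{(-u^{(1)}_{n+1}-1)!}{(-u_{n+1}-1)!\,\prod_{j=1}^N k_j!}
\]
a multinomial coefficient, hence an integer.  Together with the $p$-integrality of $\prod_j\hat{\theta}_{1,k_j}$ (immediate from (3.6)) and Lemma~7.13 for $F^{(i)}_{u^{(1)}}(\Lambda)/(-u^{(1)}_{n+1}-1)!$, this gives the divisibility in one line; the sharper form of (3.6) is then used only to establish convergence.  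Your digit-sum inequality is exactly Kummer's theorem applied to this same multinomial coefficient, so the two arguments are really the same fact in different clothing---the paper's version just avoids the intermediate bookkeeping.
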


\begin{proof}
Using (3.9) we can write (7.16) in the form
\begin{multline}
G^{(i)}_u(\Lambda) = \\ 
\sum_{\substack{u^{(1)}\in {\mathcal M}_-,u^{(2)}\in{\mathbb N}A\\ u^{(1)}+u^{(2)} = u}} \frac{F_{u^{(1)}}^{(i)}(\Lambda)}{(-u^{(1)}_{n+1}-1)!} \sum_{\substack{l_1,\dots,l_N\in{\mathbb N}\\ \sum_{k=1}^N l_k{\bf a}_k = u^{(2)}}} \bigg(\prod_{k=1}^N
\hat{\theta}_{1,l_k}\bigg) \frac{(-u^{(1)}_{n+1}-1)!}{\prod_{k=1}^N l_k!} \Lambda_1^{l_1}\cdots\Lambda_N^{l_N}.
\end{multline}
The series $F_{u^{(1)}}^{(i)}(\Lambda)/(-u^{(1)}_{n+1}-1)!$ has integral coefficients by Lemma~7.13.  The condition on the first summation on the right-hand side of (7.18) implies that
\begin{equation}
(-u_{n+1}-1)+ u^{(2)}_{n+1} = -u^{(1)}_{n+1} -1, 
\end{equation}
where $-u_{n+1},-u^{(1)}_{n+1}\geq 1$ since $u,u^{(1)}\in {\mathcal M}_-$.
Since the last coordinate of each ${\bf a}_k$ equals 1, the condition on the second summation on the right-hand side of (7.18) implies that
\[ u^{(2)}_{n+1} = \sum_{k=1}^N l_k, \]
so by (7.19)
\[ (-u_{n+1}-1) + \sum_{k=1}^N l_k= -u^{(1)}_{n+1}-1. \]
It follows that the ratio $(-u^{(1)}_{n+1}-1)!/\prod_{k=1}^N l_k$ appearing in the second summation on the right-hand side of (7.18) is an integer divisible by $(-u_{n+1}-1)!$.  For each $N$-tuple $(l_1,\dots,l_N)$ appearing in the second summation on the right-hand side of (7.18) we have
\begin{equation}
{\rm ord}\:\prod_{k=1}^N \hat{\theta}_{1,l_k}\geq \frac{\sum_{k=1}^N l_k(p-1)}{p} = \frac{u^{(2)}_{n+1}(p-1)}{p}
\end{equation}
by (3.6).  This implies that the series on the right-hand side of (7.18) converges to a series in the $\Lambda_k$ with $p$-integral coefficients that remain $p$-integral when divided by $(-u_{n+1}-1)!$.
\end{proof}

We also have an analogue of Lemma 7.11.
\begin{lemma}
For $i=1,\dots,M$ and $u\in{\mathcal M}_-$ the series $G^{(i)}_u(\Lambda)$ lies in ${R}_u$.
\end{lemma}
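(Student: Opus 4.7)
The plan is to mimic the argument of Lemma 7.11, with Lemma 7.17 supplying the coefficient bound in place of the factorial divisibility used there. I would verify three things about $G^{(i)}_u(\Lambda)$: that it is homogeneous of degree $u$, that its monomials are supported inside $L_{i,u}$, and that its coefficients are uniformly bounded in $p$-adic absolute value.

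First, homogeneity is immediate from (7.16): Lemma 7.11 gives $F^{(i)}_{u^{(1)}}(\Lambda) \in R_{u^{(1)}}$, and (3.9) displays $\hat{\theta}_{1,u^{(2)}}(\Lambda)$ as a polynomial homogeneous of degree $u^{(2)}$, so each summand in (7.16) has degree $u^{(1)}+u^{(2)}=u$.

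Next, for the support claim, a monomial of $F^{(i)}_{u^{(1)}}$ has exponent $m \in L_{i,u^{(1)}}$ and a monomial of $\hat{\theta}_{1,u^{(2)}}$ has exponent $k \in {\mathbb N}^N$ with $\sum_j k_j {\bf a}_j = u^{(2)}$, so the product contributes exponent $m+k$ satisfying $(m+k)_j \geq 0$ for $j\neq i$ and $\sum_j (m+k)_j {\bf a}_j = u$. Reading off last coordinates (each ${\bf a}_k$ ends in $1$) gives $\sum_j (m+k)_j = u_{n+1} \leq -1$, which forces $(m+k)_i \leq -1$; hence $m+k \in L_{i,u}$, confirming the observation made after (7.16). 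Now, as in the proof of Lemma 7.11, write $-u = \sum_k l_k {\bf a}_k$ with $l_k \in {\mathbb Z}_{\geq 0}$; the same inclusion argument yields $L_{i,u} \subseteq -(l_1,\dots,l_N) + L_i$, so the monomials of $\Lambda_1^{l_1}\cdots \Lambda_N^{l_N} G^{(i)}_u(\Lambda)$ are all supported in $L_i \subseteq E$.

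Finally, Lemma 7.17 asserts that each coefficient of $G^{(i)}_u(\Lambda)$ has ${\rm ord}$ at least $-{\rm ord}((-u_{n+1}-1)!)$, a bound depending only on $u$. Combined with the support statement this places $\Lambda_1^{l_1}\cdots \Lambda_N^{l_N} G^{(i)}_u(\Lambda)$ in $R_E$, whence $G^{(i)}_u(\Lambda) \in \Lambda_1^{-l_1}\cdots \Lambda_N^{-l_N} R_E \subseteq \tilde{R} \subseteq R$, and together with the homogeneity this gives $G^{(i)}_u(\Lambda) \in R_u$. I do not anticipate any genuine obstacle: the lemma is essentially Lemma 7.11 repackaged with Lemma 7.17 substituted for the direct factorial-divisibility argument. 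The only mild care needed is the support verification, which as above reduces to the observation that the $x$-degree constraint $u_{n+1} < 0$ forces $(m+k)_i$ to remain nonpositive even after multiplying the $q(\gamma_0 \Lambda_i x^{{\bf a}_i})$ factor by the nonneg-exponent series $\hat{\theta}_1(\Lambda_i x^{{\bf a}_i})$.
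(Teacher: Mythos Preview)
Your argument is correct, but it follows a somewhat different route from the paper's. The paper does not use the global support observation. Instead it works term by term in the decomposition~(7.18): each summand $F^{(i)}_{u^{(1)}}(\Lambda)\cdot(\text{polynomial in }\Lambda)$ lies in $\tilde R$ by the proof of Lemma~7.11, and then estimate~(7.20) shows the summands have norm $\leq p^{-u^{(2)}_{n+1}(p-1)/p}\to 0$ as $u^{(2)}\to\infty$, so the infinite sum converges in the completion $R$.

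Your approach instead packages the convergence into Lemma~7.17, then exploits the remark after~(7.16) that the support of $G^{(i)}_u(\Lambda)$ is entirely in $L_{i,u}$ and the shift argument of Lemma~7.11 to land the whole series in a \emph{single} translate $\Lambda^{-l}R_E\subseteq\tilde R$. This is a bit more direct and actually yields the slightly sharper conclusion $G^{(i)}_u(\Lambda)\in\tilde R$ rather than merely $\in R$. The paper's argument, on the other hand, keeps the role of the estimate~(7.20) visible and does not need to re-verify the support claim. One small slip: Lemma~7.17 says $G^{(i)}_u/(-u_{n+1}-1)!$ is $p$-integral, so the coefficients of $G^{(i)}_u$ have ${\rm ord}\geq {\rm ord}\big((-u_{n+1}-1)!\big)\geq 0$, not $\geq -{\rm ord}\big((-u_{n+1}-1)!\big)$ as you wrote; since you only need boundedness this does not affect the argument.
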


\begin{proof}
The homogeneity condition is clear.  Since the second summation on the right-hand side of (7.18) is finite, the sum
\begin{equation}
\frac{F_{u^{(1)}}^{(i)}(\Lambda)}{(-u^{(1)}_{n+1}-1)!} \sum_{\substack{l_1,\dots,l_N\in{\mathbb N}\\ \sum_{k=1}^N l_k{\bf a}_k = u^{(2)}}} \bigg(\prod_{k=1}^N
\hat{\theta}_{1,l_k}\bigg) \frac{(-u^{(1)}_{n+1}-1)!}{\prod_{k=1}^N l_k!} \Lambda_1^{l_1}\cdots\Lambda_N^{l_N}
\end{equation}
lies in $\tilde{R}$ by the proof of Lemma 7.11.  Furthermore, it follows from (7.20) that the expression (7.22) has norm
bounded by $p^{-u^{(2)}_{n+1}(p-1)/p}$.  In the first summation on the right-hand side of (7.18), a given $u^{(2)}\in{\mathbb N}A$ can appear at most once, i.~e, $u^{(2)}\to\infty$ in this summation. This implies that the first summation on the right-hand side of (7.18) converges in the norm on ${\mathbb B}$.
\end{proof}

We define a matrix $G(\Lambda) = \big[ G_{ij}(\Lambda) \big]_{i,j=1}^M$ corresponding to $F(\Lambda)$ by the analogue of~(7.12):
\begin{equation}
G_{ij}(\Lambda) = \Lambda_j G^{(i)}_{-{\bf a}_j}(\Lambda) = {\mathbb M}\big(G(\Lambda,x)\big).
\end{equation}
Like the $F_{ij}(\Lambda)$, the monomials in the $G_{ij}(\Lambda)$ have exponents in $L_i$.  Furthermore, the $G_{ii}(\Lambda)$ have constant term~1, while the $G_{ij}(\Lambda)$ for $i\neq j$ have no constant term.  This implies that $\det G(\Lambda)$ is an invertible element of~${R}$, hence $G(\Lambda)^{-1}$ is a matrix with entries in ${R}$.  Since the series $G_{ij}(\Lambda)$ have integral coefficients (Lemma~7.17) and the $G_{ii}(\Lambda)$ have constant term~1, it follows that
\begin{equation} 
\big\lvert G(\Lambda) \big\rvert = \big\lvert \det G(\Lambda)\big\rvert = 1. 
\end{equation}
This implies in particular that 
\begin{equation}
\lvert G(\Lambda)^{-1}\rvert\leq 1.
\end{equation}  

We simplify our notation: for $u,u^{(1)}\in{\mathcal M}_-$, $u\neq u^{(1)}$, set
\[ C_{u,u^{(1)}} = \sum_{\substack{l_1,\dots,l_N\in{\mathbb N}\\ \sum_{k=1}^N l_k{\bf a}_k = u-u^{(1)}}} \bigg(\prod_{k=1}^N
\hat{\theta}_{1,l_k}\bigg) \frac{(-u^{(1)}_{n+1}-1)!}{\prod_{k=1}^N l_k!} \Lambda_1^{l_1}\cdots\Lambda_N^{l_N}. \]
Note that this is a finite sum, $C_{u,u^{(1)}}$ is $p$-integral, and ${\rm ord}\:C_{u,u^{(1)}}>0$ by~(7.20).  Equation (7.18) then simplifies to
\begin{equation}
G^{(i)}_u(\Lambda) = F^{(i)}_u(\Lambda) + 
\sum_{\substack{u^{(1)}\in {\mathcal M}_-\\ u^{(1)}\neq u}} C_{u,u^{(1)}}\frac{F_{u^{(1)}}^{(i)}(\Lambda)}{(-u^{(1)}_{n+1}-1)!}
\end{equation}
Furthermore, the estimate (7.20) implies that $C_{u,u^{(1)}}\to 0$ as $u^{(1)}\to\infty$ in the sense that for any $\kappa>0$, the estimate ${\rm ord}\: C_{u,u^{(1)}}>\kappa$ holds for all but finitely many~$u^{(1)}$.  

We need the analogue of (7.26) with the roles of $F$ and $G$ reversed.  It follows from (7.14) and (4.1) that for $i=1,\dots,M$
\begin{equation}
F^{(i)}(\Lambda,x) = \delta_-\big(G^{(i)}(\Lambda,x) \hat{\theta}_1(\Lambda,x)^{-1}\big).
\end{equation}
This leads to the analogue of (7.18):
\begin{multline}
F^{(i)}_u(\Lambda) = \\ 
\sum_{\substack{u^{(1)}\in {\mathcal M}_-,u^{(2)}\in{\mathbb N}A\\ u^{(1)}+u^{(2)} = u}} \frac{G_{u^{(1)}}^{(i)}(\Lambda)}{(-u^{(1)}_{n+1}-1)!} \sum_{\substack{l_1,\dots,l_N\in{\mathbb N}\\ \sum_{k=1}^N l_k{\bf a}_k = u^{(2)}}} \bigg(\prod_{k=1}^N
\hat{\theta}'_{1,l_k}\bigg) \frac{(-u^{(1)}_{n+1}-1)!}{\prod_{k=1}^N l_k!} \Lambda_1^{l_1}\cdots\Lambda_N^{l_N},
\end{multline}
where the $\hat{\theta}'_{1,l_k}$ are defined by (3.10) and Lemma 7.17 tells us that the 
\[ {G_{u^{(1)}}^{(i)}(\Lambda)}/{(-u^{(1)}_{n+1}-1)!}\]  
have $p$-integral coefficients.  We define for $u,u^{(1)}\in{\mathcal M}_-$, $u\neq u^{(1)}$, 
\[ C'_{u,u^{(1)}} = \sum_{\substack{l_1,\dots,l_N\in{\mathbb N}\\ \sum_{k=1}^N l_k{\bf a}_k = u-u^{(1)}}} \bigg(\prod_{k=1}^N
\hat{\theta}'_{1,l_k}\bigg) \frac{(-u^{(1)}_{n+1}-1)!}{\prod_{k=1}^N l_k!} \Lambda_1^{l_1}\cdots\Lambda_N^{l_N}. \]
Since the $\hat{\theta}'_{1,l_k}$ also satisfy the estimate (7.20) (see (3.11)), we get that $C'_{u,u^{(1)}}$ is $p$-integral and 
${\rm ord}\:C'_{u,u^{(1)}}>0$.  In addition, $C'_{u,u^{(1)}}\to 0$ as $u^{(1)}\to\infty$.  Substitution into (7.28) now gives the desired formula:
\begin{equation}
F^{(i)}_u(\Lambda) = G^{(i)}_u(\Lambda) + 
\sum_{\substack{u^{(1)}\in {\mathcal M}_-\\ u^{(1)}\neq u}} C'_{u,u^{(1)}}\frac{G_{u^{(1)}}^{(i)}(\Lambda)}{(-u^{(1)}_{n+1}-1)!}
\end{equation}

For future reference, we record estimates that were used in the proof of (7.26) and (7.29).
\begin{proposition}
For all $u,u^{(1)}\in {\mathcal M}_-$, $u\neq u^{(1)}$, we have ${\rm ord}\:C_{u,u^{(1)}}>0$ (resp.\ ${\rm ord}\:C'_{u,u^{(1)}}>0$) and $C_{u,u^{(1)}}\to 0$ (resp. $C'_{u,u^{(1)}}\to 0$) as $u^{(1)}\to\infty$.
\end{proposition}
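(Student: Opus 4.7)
The plan is to combine the ordinal estimates (3.6) and (3.11) with the rigidity coming from the fact that the last coordinate of every ${\bf a}_k$ equals~$1$. I first observe that the sum defining $C_{u,u^{(1)}}$ is empty unless $u-u^{(1)} \in {\mathbb N}A$, in which case $C_{u,u^{(1)}}=0$ and the claims are trivial. Otherwise, reading the constraint $\sum_{k=1}^N l_k{\bf a}_k = u-u^{(1)}$ on the last coordinate yields $\sum_{k=1}^N l_k = u_{n+1}-u^{(1)}_{n+1}$ for every multi-index $(l_1,\dots,l_N)$ in the summation.  Since $u\neq u^{(1)}$ excludes the all-zero tuple, this common value is $\geq 1$.

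Given this, estimate (3.6) produces
\[ {\rm ord}\prod_{k=1}^N \hat{\theta}_{1,l_k} \;\geq\; (u_{n+1}-u^{(1)}_{n+1})\,\frac{p-1}{p}. \]
The combinatorial prefactor $(-u^{(1)}_{n+1}-1)!/\prod_{k=1}^N l_k!$ is a positive integer: as already noted in the proof of Lemma~7.17, identity (7.19) shows that it equals $(-u_{n+1}-1)!$ times the multinomial coefficient $\binom{-u^{(1)}_{n+1}-1}{-u_{n+1}-1,\,l_1,\dots,l_N}$.  Hence every coefficient of the polynomial $C_{u,u^{(1)}}$ has $p$-adic valuation at least $(u_{n+1}-u^{(1)}_{n+1})(p-1)/p \geq (p-1)/p > 0$, which gives the first claim.

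For the convergence assertion I would check that $u^{(1)}\to\infty$ in ${\mathcal M}_-$ forces $-u^{(1)}_{n+1}\to\infty$.  This is immediate: the coordinates of $u^{(1)}$ are negative integers constrained by the hyperplane relation $\sum_{i=0}^n u^{(1)}_i = d\,u^{(1)}_{n+1}$, so any bound on $|u^{(1)}_{n+1}|$ bounds the whole vector $u^{(1)}$, leaving only finitely many candidates in the discrete set ${\mathcal M}_-$.  Combined with the estimate above, the lower bound $(u_{n+1}-u^{(1)}_{n+1})(p-1)/p$ on ${\rm ord}\:C_{u,u^{(1)}}$ tends to $\infty$, whence $C_{u,u^{(1)}}\to 0$.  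The assertions for $C'_{u,u^{(1)}}$ follow by the identical argument with (3.11) in place of (3.6).  I do not anticipate a serious obstacle here; the entire proof reduces to the two ordinal estimates of Section~3 together with the hyperplane condition on~$A$, and the only point requiring any care is the observation that $u^{(1)}\to\infty$ in ${\mathcal M}_-$ entails $|u^{(1)}_{n+1}|\to\infty$.
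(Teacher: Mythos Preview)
Your argument is correct and follows essentially the same route as the paper: the proposition is stated there only ``for future reference,'' with the proof already contained in the discussion surrounding (7.18)--(7.20) and the analogous remarks after (7.28), which is exactly the computation you carry out (the ordinal bound from (3.6)/(3.11) combined with the integrality of the multinomial factor and the hyperplane constraint forcing $-u^{(1)}_{n+1}\to\infty$). One small writing note: your first sentence is grammatically ambiguous---the ``in which case'' most naturally attaches to the clause ``$u-u^{(1)}\in{\mathbb N}A$'' rather than to the empty-sum case you intend---so you may want to rephrase it.
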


\section{Eigenvectors of $\alpha^*$}

We use the series of Section~7 to construct eigenvectors of $\alpha^*$.  These eigenvectors will lead to the fixed point of the contraction mapping of Section~6.
We begin by recalling some results from~\cite{AS1}.

By \cite[Lemma 6.1]{AS1} the product $\hat{\theta}_1(t)q(\gamma_0t)$ is well-defined so we may set
\begin{equation}
Q(t) = \delta_-\big(\hat{\theta}_1(t)q(\gamma_0t)\big) = \sum_{i=1}^\infty Q_i i!\gamma_0^{-i-1} t^{-i-1},
\end{equation}
where the $Q_i$ are defined by the second equality.  The proof of \cite[Lemma 6.1]{AS1} shows that the $Q_i$ are $p$-integral.  By \cite[Proposition~6.10]{AS1} we have
\begin{equation}
\delta_-\big(\theta(t)Q(t^p)\big) = pQ(t).
\end{equation}

It follows from (8.2) that 
\[ \theta(t)Q(t^p) = pQ(t) + A(t) \]
for some series $A(t)$ in nonnegative powers of $t$.  Fix $i$, $1\leq i\leq M$ and replace $t$ in this equation by $\Lambda_ix^{{\bf a}_i}$:
\[ \theta(\Lambda_ix^{{\bf a}_i}) Q(\Lambda_i^p x^{p{\bf a}_i}) = A(\Lambda_ix^{{\bf a}_i}) + pQ(\Lambda_i x^{{\bf a}_i}). \]
Since $\delta_-(Q(\Lambda_i x^{{\bf a}_i})) = Q(\Lambda_i x^{{\bf a}_i})$ and $\delta_-(A(\Lambda_ix^{{\bf a}_i})) = 0$, we get
\begin{equation}
\delta_-\bigg(  \theta(\Lambda_ix^{{\bf a}_i}) Q(\Lambda_i^p x^{p{\bf a}_i}) \bigg) = pQ(\Lambda_i x^{{\bf a}_i}).
\end{equation}

These series are related to the $G^{(i)}(\Lambda,x)$ defined in Section~7.
\begin{lemma}
We have
\[ G^{(i)}(\Lambda,x) = \delta_-\bigg( Q(\Lambda_i x^{{\bf a}_i}) \prod_{\substack{j=1\\ j\neq i}}^N \hat{\theta}(\Lambda_jx^{{\bf a}_j})\bigg). \]
\end{lemma}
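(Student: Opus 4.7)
The plan is to start from the defining formula (7.14) for $G^{(i)}(\Lambda,x)$, split the product inside the $\delta_-$ into two factors, apply the identity (4.1) to move an inner $\delta_-$ onto the first factor, and then recognize what comes out via the definition (8.1) of $Q(t)$. Thus the proof is essentially a bookkeeping exercise combining two already-established facts.

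Concretely, from the second equality in (7.14),
\[
G^{(i)}(\Lambda,x) = \delta_-\bigl(\xi_1(\Lambda,x)\,\xi_2(\Lambda,x)\bigr),
\]
where $\xi_1(\Lambda,x) = q(\gamma_0\Lambda_i x^{{\bf a}_i})\,\hat\theta_1(\Lambda_i x^{{\bf a}_i})$ and $\xi_2(\Lambda,x) = \prod_{j\neq i}\hat\theta(\Lambda_j x^{{\bf a}_j})$. Because $\hat\theta(t)$ is a power series in $t$ and ${\bf a}_j\in\mathbb N^{n+2}$, every monomial appearing in $\xi_2$ has nonnegative $x$-exponents, so the hypothesis of (4.1) is satisfied and
\[
G^{(i)}(\Lambda,x) = \delta_-\bigl(\delta_-(\xi_1)\,\xi_2\bigr).
\]

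It remains to show that $\delta_-(\xi_1) = Q(\Lambda_i x^{{\bf a}_i})$. The series $\xi_1$ depends on $x$ only through the substitution $t\mapsto \Lambda_i x^{{\bf a}_i}$ in the one-variable Laurent series $\hat\theta_1(t)\,q(\gamma_0 t)$. Since $i\in\{1,\dots,M\}$, we have ${\bf a}_i\in U$, so every component of ${\bf a}_i$ is strictly positive. Hence a monomial $\Lambda_i^k x^{k{\bf a}_i}$ has all $x$-exponents negative if and only if $k<0$; in other words, the multivariable operator $\delta_-$ applied to a Laurent series in $\Lambda_i x^{{\bf a}_i}$ coincides with the one-variable $\delta_-$ in $t$, evaluated after substitution. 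By definition (8.1) this gives $\delta_-(\xi_1) = Q(\Lambda_i x^{{\bf a}_i})$, and plugging this back yields the stated formula.

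The only non-routine step is the commutation of $\delta_-$ with the substitution $t\mapsto \Lambda_i x^{{\bf a}_i}$; this is where the hypothesis $i\le M$ is essential, since without the strict positivity of all entries of ${\bf a}_i$ a term with $k<0$ could produce some nonnegative $x$-exponent and the commutation would fail. Everything else is a direct application of (4.1) and (8.1).
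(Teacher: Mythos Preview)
Your proof is correct and follows essentially the same route as the paper's own argument: start from the second equality in (7.14), use (4.1) to move $\delta_-$ onto the one-variable factor $q(\gamma_0\Lambda_i x^{{\bf a}_i})\hat\theta_1(\Lambda_i x^{{\bf a}_i})$, and then invoke (8.1) to identify that as $Q(\Lambda_i x^{{\bf a}_i})$. Your explicit justification that the multivariable $\delta_-$ agrees with the one-variable $\delta_-$ under the substitution $t\mapsto\Lambda_i x^{{\bf a}_i}$ (using that ${\bf a}_i\in U$ has strictly positive entries for $i\le M$) makes explicit a point the paper leaves implicit.
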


\begin{proof}
Combining (7.14), (7.3), (3.7) and using (4.1) we get
\[ G^{(i)}(\Lambda,x) = \delta_-\bigg( q(\gamma_0\Lambda_ix^{{\bf a}_i})\prod_{\substack{j=1\\ j\neq i}}^N \exp(\gamma_0\Lambda_jx^{{\bf a}_j})\prod_{j=1}^N \hat{\theta}_1(\Lambda_jx^{{\bf a}_j})\bigg). \]
Using (3.3) and (3.5), this may be rewritten as
\[ G^{(i)}(\Lambda,x) = \delta_-\bigg( q(\gamma_0\Lambda_ix^{{\bf a}_i}) \hat{\theta}_1(\Lambda_ix^{{\bf a}_i}) \prod_{\substack{j=1\\ j\neq i}}^N \hat{\theta}(\Lambda_jx^{{\bf a}_j})\bigg). \]
The assertion of the lemma now follows from (8.1) and (4.1).
\end{proof}

The following result is a key step in the proof of Theorem~2.11.
\begin{theorem}
For $i=1,\dots,M$ we have
\[ \alpha^*\big(G^{(i)}(\Lambda,x)\big) = pG^{(i)}(\Lambda,x). \]
\end{theorem}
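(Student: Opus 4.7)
The plan is to chain together Lemma 8.4, the factorization $\theta(t) = \hat{\theta}(t)/\hat{\theta}(t^p)$, and the key identity (8.3), using the formal manipulation rule (4.1) for $\delta_-$ to move things in and out of the truncation operator.

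First, by the definition of $\alpha^*$,
\[
\alpha^*\big(G^{(i)}(\Lambda,x)\big) = \delta_-\big(\theta(\Lambda,x)\,G^{(i)}(\Lambda^p,x^p)\big).
\]
Apply Lemma 8.4 with $(\Lambda,x)$ replaced by $(\Lambda^p,x^p)$ to rewrite
\[
G^{(i)}(\Lambda^p,x^p) = \delta_-\bigg(Q(\Lambda_i^p x^{p{\bf a}_i})\prod_{j\neq i}\hat\theta(\Lambda_j^p x^{p{\bf a}_j})\bigg).
\]
Since $\theta(\Lambda,x)$ has only nonnegative exponents in $x$, rule (4.1) lets us remove the inner $\delta_-$:
\[
\alpha^*\big(G^{(i)}\big) = \delta_-\bigg(\theta(\Lambda,x)\,Q(\Lambda_i^p x^{p{\bf a}_i})\prod_{j\neq i}\hat\theta(\Lambda_j^p x^{p{\bf a}_j})\bigg).
\]

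Next, I would expand $\theta(\Lambda,x) = \prod_{k=1}^N \theta(\Lambda_k x^{{\bf a}_k})$ and, for each $j\neq i$, combine $\theta(\Lambda_j x^{{\bf a}_j})\hat\theta(\Lambda_j^p x^{p{\bf a}_j}) = \hat\theta(\Lambda_j x^{{\bf a}_j})$ using $\theta(t)\hat\theta(t^p) = \hat\theta(t)$. This collapses the expression to
\[
\alpha^*\big(G^{(i)}\big) = \delta_-\bigg(\theta(\Lambda_i x^{{\bf a}_i})\,Q(\Lambda_i^p x^{p{\bf a}_i})\prod_{j\neq i}\hat\theta(\Lambda_j x^{{\bf a}_j})\bigg).
\]
Now apply (4.1) in the reverse direction: the factor $\prod_{j\neq i}\hat\theta(\Lambda_j x^{{\bf a}_j})$ has only nonnegative exponents in $x$, so I may insert a $\delta_-$ around the first two factors. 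Using (8.3), which says $\delta_-\big(\theta(\Lambda_i x^{{\bf a}_i})Q(\Lambda_i^p x^{p{\bf a}_i})\big) = pQ(\Lambda_i x^{{\bf a}_i})$, we get
\[
\alpha^*\big(G^{(i)}\big) = \delta_-\bigg(pQ(\Lambda_i x^{{\bf a}_i})\prod_{j\neq i}\hat\theta(\Lambda_j x^{{\bf a}_j})\bigg) = pG^{(i)}(\Lambda,x),
\]
where the last equality is Lemma 8.4 again.

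There is no real obstacle here beyond bookkeeping: the whole argument is a formal identity on Laurent series once Lemma 8.4 and (8.3) are in hand. The one point requiring mild care is verifying that each product of series is genuinely well-defined in the relevant formal framework (so that the application of (4.1) is legitimate), but this follows from the fact that $\hat\theta$, $\theta$, and the partial expansions only involve nonnegative powers of $x$ on the relevant sides of the truncation. The structural content of the proof is simply that the $\hat\theta$-factors are designed precisely to interpolate between $\theta$ (which witnesses the Frobenius action) and $Q$ (which is the one-variable eigenvector built in (8.2)).
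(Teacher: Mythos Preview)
Your proof is correct and follows essentially the same route as the paper's: both invoke Lemma~8.4, remove the inner $\delta_-$ via (4.1), collapse the $\theta\cdot\hat\theta(\cdot^p)$ factors to $\hat\theta$ using $\theta(t)\hat\theta(t^p)=\hat\theta(t)$, and then apply (8.3) followed by Lemma~8.4 again. The only point the paper stresses more than you do is that the product rearrangement step is ``not completely trivial,'' since the series with negative $x$-exponents and those with nonnegative $x$-exponents do not sit in a common commutative ring, and it refers to \cite[Section~5]{AS2} for the verification.
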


\begin{proof}
Since $\theta(t) = \hat{\theta}(t)/\hat{\theta}(t^p)$ we have
\begin{equation}
\prod_{\substack{j=1\\ j\neq i}}^N \theta(\Lambda_jx^{{\bf a}_j})\prod_{\substack{j=1\\ j\neq i}}^N \hat{\theta}(\Lambda_j^px^{p{\bf a}_j}) = \prod_{\substack{j=1\\ j\neq i}}^N \hat{\theta}(\Lambda_jx^{{\bf a}j}).
\end{equation}
We now compute
\begin{align*}
\alpha^*\big(G^{(i)}(\Lambda,x)\big) & = \delta_-\bigg(\prod_{j=1}^N \theta(\Lambda_jx^{{\bf a}_j}) \delta_-\bigg( Q(\Lambda_i^p x^{p{\bf a}_i}) \prod_{\substack{j=1\\ j\neq i}}^N \hat{\theta}(\Lambda_j^px^{p{\bf a}_j})\bigg)\bigg) \\
 & = \delta_-\bigg(\theta(\Lambda_ix^{{\bf a}_i})Q(\Lambda_i^px^{p{\bf a}_i}) \prod_{\substack{j=1\\ j\neq i}}^N \theta(\Lambda_jx^{{\bf a}_j}) \prod_{\substack{j=1\\ j\neq i}}^N \hat{\theta}(\Lambda_j^px^{p{\bf a}_j})\bigg) \\
 & = p\delta_-\bigg(Q(\Lambda_ix^{{\bf a}_i}) \prod_{\substack{j=1\\ j\neq i}}^N \hat{\theta}(\Lambda_jx^{{\bf a}_j})\bigg) \\
& = pG^{(i)}(\Lambda,x),
\end{align*}
where the first equality follows from Lemma 8.4, the second follows from (4.1) and rearranging the product, the third follows from (8.3) and (8.6), and the fourth follows from Lemma 8.4.  Note that the rearrangement of the product occuring in the second equality is not completely trivial.  One of the series contains negative powers of the $x_i$, the rest contain positive powers of the $x_i$, so they do not all lie in a common commutative ring.  One has to write out both sides to verify that they are equal.  See \cite[Section 5]{AS2} for more details on this calculation.  
\end{proof}

The action of $\alpha^*$ was extended to $S^M$ coordinatedwise, so if we set
\begin{equation}
G(\Lambda,x) = \big( G^{(1)}(\Lambda,x),\dots,G^{(M)}(\Lambda,x)\big)\in S^M,
\end{equation}
then we get the following result.
\begin{corollary}
We have $\alpha^*\big(G(\Lambda,x)\big) = pG(\Lambda,x)$.
\end{corollary}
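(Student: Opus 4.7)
The plan is essentially to unpack definitions: Corollary 8.8 follows directly from Theorem 8.5 combined with the componentwise definition of the action of $\alpha^*$ on $S^M$.

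Concretely, recall from (5.2) that $\alpha^*$ is extended from $S$ to $S^M$ by
\[ \alpha^*\big(\xi(\Lambda,x)\big) = \big(\alpha^*(\xi^{(1)}(\Lambda,x)),\dots,\alpha^*(\xi^{(M)}(\Lambda,x))\big). \]
Since $G(\Lambda,x) = (G^{(1)}(\Lambda,x),\dots,G^{(M)}(\Lambda,x))$ by (8.7), applying this definition yields
\[ \alpha^*\big(G(\Lambda,x)\big) = \big(\alpha^*(G^{(1)}(\Lambda,x)),\dots,\alpha^*(G^{(M)}(\Lambda,x))\big). \]
Theorem 8.5 then tells us that each component satisfies $\alpha^*(G^{(i)}(\Lambda,x)) = pG^{(i)}(\Lambda,x)$, so the right-hand side equals
\[ \big(pG^{(1)}(\Lambda,x),\dots,pG^{(M)}(\Lambda,x)\big) = pG(\Lambda,x). \]

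There is no real obstacle here, since Theorem 8.5 has already done the substantive work (the rearrangement of infinite products of $\theta$ and $\hat\theta$ factors, together with the key identity (8.3)). The only thing one should be a little careful about is that $G(\Lambda,x)$ genuinely lies in $S^M$: this is guaranteed because each $G^{(i)}(\Lambda,x)$ is an element of $S$ by Lemma 7.21 (which gives $G_u^{(i)}(\Lambda) \in R_u$) together with the boundedness of coefficients provided by the $p$-integrality estimate of Lemma 7.17. Consequently the action of $\alpha^*$ on $G(\Lambda,x)$ is well-defined and the corollary follows immediately.
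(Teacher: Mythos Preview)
Your proof is correct and follows exactly the paper's approach: the paper does not even give a separate proof, merely noting that the corollary follows immediately from Theorem~8.5 together with the componentwise extension of $\alpha^*$ to $S^M$. Your additional remark verifying that $G(\Lambda,x)\in S^M$ via Lemmas~7.17 and~7.21 is a reasonable point of care, though the paper simply asserts $G(\Lambda,x)\in S^M$ in the line preceding the corollary.
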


We now verify that $G(\Lambda)^{-1}G(\Lambda,x)\in T$.  First of all, 
\[ {\mathbb M}\big(G(\Lambda)^{-1}G(\Lambda,x)\big) = I \]
by (7.23) and (5.5).  This implies in particular that 
\[ \lvert G(\Lambda)^{-1}G(\Lambda,x)\rvert\geq 1. \]
But $\lvert G(\Lambda,x)\rvert\leq 1$ by Lemma 7.17 and $\lvert G(\Lambda)^{-1}\rvert\leq 1$ by~(7.25), so
\[ \lvert G(\Lambda)^{-1}G(\Lambda,x)\rvert\leq 1 \]
also.  We thus conclude that
\begin{equation}
\lvert G(\Lambda)^{-1}G(\Lambda,x)\rvert= 1. 
\end{equation}

\begin{corollary}
The product $G(\Lambda)^{-1}G(\Lambda,x)$ is the unique fixed point of $\phi$, hence lies in $T'$.
\end{corollary}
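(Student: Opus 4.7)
The plan is to verify that $G(\Lambda)^{-1}G(\Lambda,x)$ is a fixed point of $\phi$, then invoke the contraction mapping theorem via Proposition~6.3 to get both uniqueness and the $T'$-membership.

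First I would note that $G(\Lambda)^{-1}G(\Lambda,x)\in T$ has essentially been established in the paragraphs preceding the corollary: relation (7.23) together with (5.5) gives ${\mathbb M}(G(\Lambda)^{-1}G(\Lambda,x))=I$, while (7.25), Lemma 7.17, and (8.8) together force $\lvert G(\Lambda)^{-1}G(\Lambda,x)\rvert = 1$. So we are in a position to apply $\phi$.

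The key step is showing that $\phi$ fixes this element. The operator $\alpha^*$ is semi-linear with respect to left multiplication by matrices in ${\rm Mat}_M(R_{\bf 0})$ in the sense that for $Y(\Lambda)\in{\rm Mat}_M(R_{\bf 0})$ one has $\alpha^*\big(Y(\Lambda)\xi(\Lambda,x)\big) = Y(\Lambda^p)\alpha^*\big(\xi(\Lambda,x)\big)$, since the substitution $\Lambda\mapsto\Lambda^p$ arises when passing $Y(\Lambda)$ through $\theta(\Lambda,x)\xi(\Lambda^p,x^p)$ and $Y(\Lambda^p)$ commutes with $\delta_-$ (it contains no $x$). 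Applying this with $Y(\Lambda)=G(\Lambda)^{-1}$ and using Corollary~8.7 yields
\[ \alpha^*\big(G(\Lambda)^{-1}G(\Lambda,x)\big) = G(\Lambda^p)^{-1}\alpha^*\big(G(\Lambda,x)\big) = pG(\Lambda^p)^{-1}G(\Lambda,x). \]
Taking ${\mathbb M}$ of both sides and using (5.4) together with the identity ${\mathbb M}\big(G(\Lambda,x)\big) = G(\Lambda)$ from (7.23) gives
\[ {\mathbb M}\big(\alpha^*(G(\Lambda)^{-1}G(\Lambda,x))\big) = pG(\Lambda^p)^{-1}G(\Lambda). \]
Consequently
\[ \phi\big(G(\Lambda)^{-1}G(\Lambda,x)\big) = \big(pG(\Lambda^p)^{-1}G(\Lambda)\big)^{-1}\cdot pG(\Lambda^p)^{-1}G(\Lambda,x) = G(\Lambda)^{-1}G(\Lambda,x), \]
so $G(\Lambda)^{-1}G(\Lambda,x)$ is indeed a fixed point of $\phi$.

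For uniqueness and the conclusion that the fixed point lies in $T'$, we invoke Proposition~6.3: $\phi$ is a contraction mapping on the complete metric space $T$, so by the Banach fixed point theorem it has a unique fixed point in $T$. Since $\phi(T')\subseteq T'$ (noted just after Proposition 6.3) and $T'$ is closed in $S^M$, iterating $\phi$ from any element of $T'$ produces a Cauchy sequence in $T'$ whose limit must coincide with the unique fixed point; hence that fixed point lies in $T'$. This forces $G(\Lambda)^{-1}G(\Lambda,x)\in T'$, completing the proof.

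The only step that requires any care is the semi-linearity $\alpha^*(Y(\Lambda)\xi)=Y(\Lambda^p)\alpha^*(\xi)$, because $G(\Lambda)^{-1}$ lives in $R_{\bf 0}$ rather than being a scalar; but this follows directly from the definition of $\alpha^*$ in Section~4 and the fact that $\delta_-$ is $R_{\bf 0}$-linear when the scalar factor contains no $x$.
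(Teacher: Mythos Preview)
Your proof is correct and follows essentially the same line as the paper: compute $\alpha^*\big(G(\Lambda)^{-1}G(\Lambda,x)\big)$ via the semilinearity of $\alpha^*$ (which the paper invokes as ``the definition of $\alpha^*$'') and Corollary~8.8, read off $\mathbb{M}$ of the result using (5.4) and (7.23), and conclude that $\phi$ fixes this element, with uniqueness and $T'$-membership coming from the contraction-mapping discussion at the end of Section~6. Your equation and corollary numbers are off by one in a couple of places (the eigenvector corollary is 8.8, and the norm equality is (8.9)), but the argument is the paper's argument, spelled out in slightly more detail.
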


\begin{proof}
From the definition of $\alpha^*$ and Corollary 8.8 we have
\begin{align}
\alpha^*\big(G(\Lambda)^{-1}G(\Lambda,x)\big)  &= G(\Lambda^p)^{-1} \alpha^*\big(G(\Lambda,x)\big) \\
 &= G(\Lambda^p)^{-1}pG(\Lambda,x). \nonumber
\end{align}
This implies by (7.23) and (5.4) that 
\begin{equation}
 {\mathbb M}\big(\alpha^*\big(G(\Lambda)^{-1}G(\Lambda,x)\big)\big) = pG(\Lambda^p)^{-1}G(\Lambda), 
\end{equation}
hence
\[ \phi\big(G(\Lambda)^{-1}G(\Lambda,x)\big) = G(\Lambda)^{-1}G(\Lambda,x). \]
\end{proof}

\begin{corollary}
The entries of $G(\Lambda^p)^{-1}G(\Lambda)$ lie in $R'_{\bf 0}$.
\end{corollary}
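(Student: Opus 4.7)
The plan is to read off this corollary directly from Corollary 8.11 together with the formula for $\mathbb{M}(\alpha^*(G(\Lambda)^{-1}G(\Lambda,x)))$ already computed in (8.12). The strategy has essentially no new content beyond tracking which subspaces everything lives in.

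First I would invoke Corollary 8.11, which gives us $G(\Lambda)^{-1}G(\Lambda,x)\in T'\subseteq (S')^M$. Next, by Proposition 4.5 the operator $\alpha^*$ preserves $S'$ (componentwise, by the definition in Section~5), so
\[ \alpha^*\big(G(\Lambda)^{-1}G(\Lambda,x)\big)\in (S')^M. \]
By the discussion immediately following the definition of $\mathbb{M}$ in Section~5, the map $\mathbb{M}$ sends $(S')^M$ into the $(M\times M)$-matrices with entries in $R'_{\bf 0}$. Consequently
\[ \mathbb{M}\big(\alpha^*(G(\Lambda)^{-1}G(\Lambda,x))\big) \]
is a matrix with entries in $R'_{\bf 0}$.

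Now I would apply the identity (8.12), which states precisely that
\[ \mathbb{M}\big(\alpha^*(G(\Lambda)^{-1}G(\Lambda,x))\big) = pG(\Lambda^p)^{-1}G(\Lambda). \]
Combined with the previous step, this shows that $pG(\Lambda^p)^{-1}G(\Lambda)$ has entries in $R'_{\bf 0}$. Finally, since $R'_{\bf 0}$ is a ${\mathbb C}_p$-subspace of ${\mathbb B}$ and $p^{-1}\in{\mathbb C}_p$, multiplying by $p^{-1}$ keeps us in $R'_{\bf 0}$, so $G(\Lambda^p)^{-1}G(\Lambda)$ itself has entries in $R'_{\bf 0}$, as claimed.

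There is no real obstacle here: the content was established earlier, and this corollary is a bookkeeping step. The only mildly subtle point is the verification that $\mathbb{M}$ carries $(S')^M$ into matrices over $R'_{\bf 0}$ (rather than just $R_{\bf 0}$), but this is exactly the parenthetical remark appended to the definition of $\mathbb{M}$ in Section~5, so no further argument is needed.
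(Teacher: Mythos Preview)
Your proof is correct and follows essentially the same route as the paper's own argument: invoke the fixed-point corollary to get $G(\Lambda)^{-1}G(\Lambda,x)\in(S')^M$, apply the stability of $\alpha^*$ on $S'$, then read off the conclusion from the identity ${\mathbb M}(\alpha^*(G(\Lambda)^{-1}G(\Lambda,x)))=pG(\Lambda^p)^{-1}G(\Lambda)$ and divide by $p$. One minor numbering slip: the fixed-point result you cite as ``Corollary 8.11'' is Corollary~8.10 in the paper's scheme (8.11 and 8.12 are equation numbers inside its proof), but the content you invoke is exactly right.
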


\begin{proof} 
Corollary 8.10 implies that $G(\Lambda)^{-1}G(\Lambda,x)$ lies in $(S')^M$, and since $\alpha^*$ is stable on $(S')^M$ we also have $\alpha^*\big(G(\Lambda)^{-1}G(\Lambda,x)\big)\in (S')^M$.  The assertion of the corollary now follows from (8.11).
\end{proof}

Put ${\mathcal G}(\Lambda) = G(\Lambda^p)^{-1}G(\Lambda)$.  We note one more consequence of Theorem~8.5.
\begin{proposition}
The reciprocal eigenvalues of ${\mathcal G}(\lambda)$ are $p$-adic units for all $\lambda$ in~${\mathcal D}$.
\end{proposition}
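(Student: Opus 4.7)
The plan is to establish that $\mathcal{G}(\lambda)\in\mathrm{GL}_M(\mathcal{O}_{\mathbb{C}_p})$ for every $\lambda\in\mathcal{D}$.  Once this is in hand, the characteristic polynomial of $\mathcal{G}(\lambda)$ is monic with coefficients in $\mathcal{O}_{\mathbb{C}_p}$ and its constant term $(-1)^M\det\mathcal{G}(\lambda)$ is a $p$-adic unit, because $\det\mathcal{G}(\lambda)$ and $\det\mathcal{G}(\lambda)^{-1}$ both lie in $\mathcal{O}_{\mathbb{C}_p}$ and multiply to~$1$.  A standard Newton-polygon argument then forces every reciprocal eigenvalue of $\mathcal{G}(\lambda)$ to be a $p$-adic unit.

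The upper bound $\lvert\mathcal{G}(\Lambda)\rvert\leq 1$ is essentially automatic.  Corollary~8.12 places the entries of $\mathcal{G}(\Lambda)=G(\Lambda^p)^{-1}G(\Lambda)$ in $R'_{\mathbf 0}$, while (7.24)--(7.25) give $\lvert G(\Lambda)\rvert=1$ and $\lvert G(\Lambda^p)^{-1}\rvert\leq 1$.  Applying the norm inequality~(2.12) for elements of $R'$ then yields $\lvert\mathcal{G}(\lambda)\rvert\leq 1$ for every $\lambda\in\mathcal{D}$.

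The main step, and the one I expect to carry the real content, is to show that $\mathcal{G}(\Lambda)^{-1}$ has entries in $R'$ of norm at most~$1$.  The idea is to apply Corollary~5.10 to $\xi(\Lambda,x):=G(\Lambda)^{-1}G(\Lambda,x)$, which belongs to $T'\subseteq (S')^M$ by Corollary~8.10 and hence satisfies the hypotheses of Lemma~5.6.  Formula~(8.11) reads $\mathbb{M}\bigl(\alpha^*(\xi)\bigr)=p\mathcal{G}(\Lambda)$, so Corollary~5.10 immediately gives $\lvert\mathcal{G}(\Lambda)^{-1}\rvert=1$.  To ensure the inverse lives in $R'$ rather than merely in $R$, I will invoke the factorization $\mathbb{M}\bigl(\alpha^*(\xi)\bigr)=\gamma_0^{p-1}B(\Lambda)\bigl(I+\gamma_0^{-(p-1)}B(\Lambda)^{-1}\mathbb{M}^{(2)}(\Lambda)\bigr)$ extracted from the proof of Lemma~5.6: the factor $\gamma_0^{p-1}B(\Lambda)$ is invertible in $\mathrm{Mat}_M(R'_{\mathbf 0})$ because $D(\Lambda)^{-1}\in R'_E$ by the very definition of $R'_E$, and the second factor equals $I$ plus a matrix of norm at most $\lvert p\rvert$, so it inverts by a geometric series converging in the complete ring $R'_{\mathbf 0}$.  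A second application of~(2.12) gives $\lvert\mathcal{G}(\lambda)^{-1}\rvert\leq 1$.

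Finally, since evaluation at $\lambda\in\mathcal{D}$ is a ring homomorphism on $R'$, the identity $\mathcal{G}(\Lambda)\cdot\mathcal{G}(\Lambda)^{-1}=I$ in $\mathrm{Mat}_M(R'_{\mathbf 0})$ specializes to $\mathcal{G}(\lambda)\cdot\mathcal{G}(\lambda)^{-1}=I$ in $\mathrm{Mat}_M(\mathbb{C}_p)$.  Combined with the two norm bounds this yields $\mathcal{G}(\lambda)\in\mathrm{GL}_M(\mathcal{O}_{\mathbb{C}_p})$, and the Newton-polygon observation from the opening paragraph completes the argument.
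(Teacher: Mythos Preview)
Your proposal is correct and follows essentially the same route as the paper: both arguments take $\xi=G(\Lambda)^{-1}G(\Lambda,x)\in T'$, use Lemma~5.6/Corollary~5.10 together with the identity $\mathbb{M}\bigl(\alpha^*(\xi)\bigr)=p\,\mathcal{G}(\Lambda)$ (this is Eq.~(8.12), not (8.11)), and then specialize via~(2.12). The paper phrases the endgame in terms of $\lvert\det\mathcal{G}(\Lambda)^{\pm1}\rvert=1$, whereas you work at the level of matrix entries and conclude $\mathcal{G}(\lambda)\in\mathrm{GL}_M(\mathcal{O}_{\mathbb{C}_p})$; these are equivalent. Your extra paragraph showing $\mathcal{G}(\Lambda)^{-1}\in\mathrm{Mat}_M(R'_{\mathbf 0})$ via the factorization from the proof of Lemma~5.6 is a genuine addition: the paper needs this fact to apply~(2.12) to $\det\mathcal{G}(\Lambda)^{-1}$ but does not spell it out, so your treatment is in that respect more complete. (Two minor reference slips: the statement that the entries of $\mathcal{G}(\Lambda)$ lie in $R'_{\mathbf 0}$ is Corollary~8.13, and the $\mathbb{M}$-formula you quote is~(8.12).)
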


\begin{proof}
We showed earlier that $G(\Lambda)^{-1}G(\Lambda,x)\in T$, so $G(\Lambda)^{-1}G(\Lambda,x)$ satisfies the hypotheses of Lemma~5.6.  By (8.9) and (5.8) we have
\[ \big\lvert \det{\mathbb M}\big(\alpha^*\big(G(\Lambda)^{-1}G(\Lambda,x)\big)\big) \big\rvert = \lvert p\rvert \]
and by (8.9) and Corollary 5.10 we have
\[ \big\lvert \det{\mathbb M}\big(\alpha^*\big(G(\Lambda)^{-1}G(\Lambda,x)\big)\big)^{-1} \big\rvert = \lvert 1/p\rvert. \]
Combining these equalities with (8.12) gives
\[ \big\lvert\det\big(G(\Lambda^p)^{-1}G(\Lambda)\big)\big\rvert =\big\lvert\det\big(G(\Lambda)^{-1}G(\Lambda^p)\big)\big\rvert = 1. \]
By (2.12), we then have
\[ \lvert\det{\mathcal G}(\lambda)\rvert\leq 1 \;\text{and}\; \lvert\det{\mathcal G}(\lambda)^{-1}\rvert\leq 1\;\text{for all $\lambda\in{\mathcal D}$}, \]
which implies that $\det{\mathcal G}(\lambda)$ assumes unit values on ${\mathcal D}$.  Since ${\mathcal G}(\lambda)$ assumes $p$-integral values on ${\mathcal D}$, it follows that the roots of $\det(I-t{\mathcal G}(\lambda))$ are $p$-adic units.
\end{proof}

We introduce some additional notation that will be useful later.  Put
\begin{equation}
 {\mathcal G}(\Lambda,x) = \big({\mathcal G}^{(1)}(\Lambda,x),\dots,{\mathcal G}^{(M)}(\Lambda,x)\big) = G(\Lambda)^{-1}G(\Lambda,x). 
\end{equation}
This is an element of $T'$ by Corollary 8.10, so each ${\mathcal G}^{(i)}(\Lambda,x)$ lies in $S'$ and we may write
\begin{equation}
{\mathcal G}^{(i)}(\Lambda,x) = \sum_{u\in{\mathcal M}_-} {\mathcal G}_u^{(i)}(\Lambda) \gamma_0^{u_{n+1}} x^u 
\end{equation}
with ${\mathcal G}_u^{(i)}(\Lambda)$ in $R_u'$ for all $i,u$.  

\section{Relation between ${\mathcal F}(\Lambda)$ and ${\mathcal G}(\Lambda)$}

In this section we prove the first assertion of Theorem~2.11 and show that the second assertion of Theorem~2.11 is equivalent to the same statement with ${\mathcal F}(\Lambda)$ replaced by ${\mathcal G}(\Lambda)$ (see Proposition ~9.5).

Put $F(\Lambda,x) = (F^{(1)}(\Lambda,x),\dots,F^{(M)}(\Lambda,x))$, where $F^{(i)}(\Lambda,x)$ is given by~(7.3).  We regard $F(\Lambda,x)$ and $G(\Lambda,x)$ (defined in (8.7)) as column vectors so they can be multiplied on the left by $(M\times M)$-matrices.  The following result is a consequence of the fact that $G(\Lambda)^{-1}G(\Lambda,x)$ lies in $(S')^M$ (Corollary~8.10).  
\begin{proposition}
The vectors $F(\Lambda)^{-1}F(\Lambda,x)$,  $G(\Lambda)^{-1}F(\Lambda,x)$, and $F(\Lambda)^{-1}G(\Lambda,x)$ lie in $(S')^M$.
\end{proposition}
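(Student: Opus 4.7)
The plan is to deduce all three inclusions from Corollary 8.10 and the expansion (7.29) in three steps: first handle $G(\Lambda)^{-1}F(\Lambda,x)$; then use the resulting matrix identity to show $F(\Lambda)^{-1}G(\Lambda)\in\mathrm{Mat}_M(R'_{\bf 0})$; finally combine these to deduce the two inclusions involving $F(\Lambda)^{-1}$.

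First I would apply (7.29) termwise. The entries of $G(\Lambda)^{-1}$ lie in $R_{\bf 0}$ and therefore commute with extraction of the coefficient of $\gamma_0^{u_{n+1}}x^u$, so multiplying (7.29) on the left by $G(\Lambda)^{-1}$ yields, for each $i\in\{1,\dots,M\}$ and $u\in{\mathcal M}_-$,
\[
[G(\Lambda)^{-1}F(\Lambda,x)]^{(i)}_u = [G(\Lambda)^{-1}G(\Lambda,x)]^{(i)}_u + \sum_{\substack{u^{(1)}\in{\mathcal M}_-\\ u^{(1)}\neq u}} C'_{u,u^{(1)}}\,\frac{[G(\Lambda)^{-1}G(\Lambda,x)]^{(i)}_{u^{(1)}}}{(-u^{(1)}_{n+1}-1)!}.
\]
By Corollary 8.10 each $[G(\Lambda)^{-1}G(\Lambda,x)]^{(i)}_{u^{(1)}}$ lies in $R'_{u^{(1)}}$. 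Lemma 7.17 together with $\lvert G(\Lambda)^{-1}\rvert\leq 1$ from (7.25) shows that $\lvert[G(\Lambda)^{-1}G(\Lambda,x)]^{(i)}_{u^{(1)}}/(-u^{(1)}_{n+1}-1)!\rvert\leq 1$. Since $\lvert C'_{u,u^{(1)}}\rvert\to 0$ as $u^{(1)}\to\infty$ by Proposition 7.31, the series converges in $R'_u$, and the uniform bound gives $G(\Lambda)^{-1}F(\Lambda,x)\in(S')^M$.

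Second, I would specialize to $u=-{\bf a}_j$ and multiply by $\Lambda_j$ to obtain
\[
G(\Lambda)^{-1}F(\Lambda) = {\mathbb M}(G(\Lambda)^{-1}F(\Lambda,x)) = I + K(\Lambda),
\]
with $K(\Lambda)\in\mathrm{Mat}_M(R'_{\bf 0})$. For $u^{(1)}\neq -{\bf a}_j$ the summation constraint in the definition of $C'_{-{\bf a}_j,u^{(1)}}$ forces $\sum_k l_k\geq 1$, so estimate (3.11) yields $\lvert C'_{-{\bf a}_j,u^{(1)}}\rvert\leq\lvert p\rvert^{(p-1)/p}$, and hence $\lvert K(\Lambda)\rvert\leq\lvert p\rvert^{(p-1)/p}<1$.

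The principal technical point, and the main obstacle to overcome, is inverting $G(\Lambda)^{-1}F(\Lambda)$ inside $\mathrm{Mat}_M(R'_{\bf 0})$. The estimate above resolves it: because $R'_{\bf 0}$ is complete in the sup norm and $\lvert K(\Lambda)\rvert<1$, the geometric series
\[
F(\Lambda)^{-1}G(\Lambda) = (I+K(\Lambda))^{-1} = \sum_{k=0}^\infty (-K(\Lambda))^k
\]
converges in $\mathrm{Mat}_M(R'_{\bf 0})$. The two remaining inclusions then follow from
\[
F(\Lambda)^{-1}F(\Lambda,x) = F(\Lambda)^{-1}G(\Lambda)\cdot G(\Lambda)^{-1}F(\Lambda,x),
\]
\[
F(\Lambda)^{-1}G(\Lambda,x) = F(\Lambda)^{-1}G(\Lambda)\cdot G(\Lambda)^{-1}G(\Lambda,x),
\]
since in each case we have a product of a matrix in $\mathrm{Mat}_M(R'_{\bf 0})$ with a vector in $(S')^M$ ($G(\Lambda)^{-1}F(\Lambda,x)$ by Step 1, $G(\Lambda)^{-1}G(\Lambda,x)$ by Corollary 8.10), and multiplication by a degree-${\bf 0}$ element of $R'_{\bf 0}$ preserves the ${\mathcal M}$-grading and the $R'$-condition on each coefficient.
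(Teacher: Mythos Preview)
Your proof is correct and follows essentially the same route as the paper: use (7.29) together with Corollary~8.10, Lemma~7.17, (7.25), and Proposition~7.30 to get $G(\Lambda)^{-1}F(\Lambda,x)\in(S')^M$; specialize to $u=-{\bf a}_j$ to obtain $G(\Lambda)^{-1}F(\Lambda)=I+K(\Lambda)$ with $\lvert K(\Lambda)\rvert<1$ and invert by the geometric series; then deduce the remaining inclusions. One small difference worth noting: for $F(\Lambda)^{-1}G(\Lambda,x)$ the paper suggests rerunning the whole argument with the roles of $F$ and $G$ interchanged via (7.26), whereas your factorization $F(\Lambda)^{-1}G(\Lambda,x)=\big(F(\Lambda)^{-1}G(\Lambda)\big)\cdot\big(G(\Lambda)^{-1}G(\Lambda,x)\big)$ gets it directly from what has already been established, which is a bit cleaner.
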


\begin{proof}
We begin by showing that $G(\Lambda)^{-1}F(\Lambda,x)\in (S')^M$.  
Writing Eq.~(7.29) for $i=1,\dots,M$ and multiplying by $G(\Lambda)^{-1}$ gives the vector equation
\begin{multline}
G(\Lambda)^{-1}\begin{bmatrix} F^{(1)}_u(\Lambda) \\ \vdots \\ F^{(M)}_u(\Lambda)\end{bmatrix} = \\
G(\Lambda)^{-1} \begin{bmatrix} G^{(1)}_u(\Lambda) \\ \vdots \\ G^{(M)}_u(\Lambda)\end{bmatrix}+ 
\sum_{\substack{u^{(1)}\in {\mathcal M}_-\\ u^{(1)}\neq u}} \frac{C'_{u,u^{(1)}}}{(-u^{(1)}_{n+1}-1)!}G(\Lambda)^{-1} 
\begin{bmatrix} G^{(1)}_{u^{(1)}}(\Lambda) \\ \vdots \\ G^{(M)}_{u^{(1)}}(\Lambda)\end{bmatrix}.
\end{multline}
We have $\lvert G(\Lambda)^{-1}\rvert\leq 1$ by (7.25).  Lemma~7.17, Proposition~7.30, and Corollary~8.10 imply that the sum on the right-hand side of (9.2) converges to an element of $(R'_u)^M$.  
Since $u\in{\mathcal M}_-$ was arbitrary, this shows that $G(\Lambda)^{-1}F(\Lambda,x)\in (S')^M$.  

Take successively $u=-{\bf a}_j$, $j=1,\dots,M$ in (9.2) and multiply the $j$-th equation by $\Lambda_j$.  We combine the column vectors in the resulting $M$ equations into matrices and apply (7.12) and (7.23).  The resulting matrix equation is 
\begin{equation}
G(\Lambda)^{-1}F(\Lambda) = I+ H(\Lambda),
\end{equation}
where we have written $H(\Lambda)$ for the $(M\times M)$-matrix whose $j$-th column is
\[ \sum_{\substack{u^{(1)}\in{\mathcal M}_-\\ u^{(1)}\neq -{\bf a}_j}} \frac{C'_{-{\bf a}_j,u^{(1)}}\Lambda_j}  {(-u^{(1)}_{n+1}-1)!}
G(\Lambda)^{-1}\begin{bmatrix} G^{(1)}_{u^{(1)}}(\Lambda) \\ \vdots \\ G^{(M)}_{u^{(1)}}(\Lambda)\end{bmatrix}. \]
The entries in the matrix $H(\Lambda)$ all lie in ${R}'_{\bf 0}$ by Corollary~8.10 and have norm $<1$ by (7.25), Lemma~7.17, and Proposition~7.30.  We can thus apply the usual geometric series formula to invert the right-hand side of (9.3).  This proves that $F(\Lambda)^{-1}G(\Lambda)$ is a matrix with entries in ${R}'_{\bf 0}$, all entries having norm $\leq 1$.  Since the left-hand side of (9.2) lies in $({R}'_u)^M$, we can now multiply it by $F(\Lambda)^{-1}G(\Lambda)$ to conclude that 
\begin{equation}
F(\Lambda)^{-1}\begin{bmatrix} F^{(1)}_u(\Lambda) \\ \vdots \\ F^{(M)}_u(\Lambda)\end{bmatrix}\in ({R}'_u)^M\quad\text{for all $u\in{\mathcal M}_-$}.
\end{equation}
This shows that $F(\Lambda)^{-1}F(\Lambda,x)$ lies in $(S')^M$.

The remaining assertion, that $F(\Lambda)^{-1}G(\Lambda,x)$ lies in $(S')^M$, can be proved similarly by reversing the roles of $F$ and $G$ and using (7.26) in place of (7.29).  Since that result is not needed in what follows, we omit the details.
\end{proof}

\begin{proposition}
The matrix ${\mathcal F}(\Lambda)$ has entries in $R'_{\bf 0}$.  For any $\lambda\in({\mathbb F}_q^\times)^M\times{\mathbb F}_q^{N-M}$ with $\bar{D}(\lambda)\neq 0$ we have
\[ \det\big(I- t{\mathcal F}(\hat{\lambda}^{p^{a-1}}){\mathcal F}(\hat{\lambda}^{p^{a-2}})\cdots{\mathcal F}(\hat{\lambda})\big) = 
\det\big(I-t{\mathcal G}(\hat{\lambda}^{p^{a-1}}){\mathcal G}(\hat{\lambda}^{p^{a-2}})\cdots{\mathcal G}(\hat{\lambda})\big) \]
where $\hat{\lambda}\in {\mathbb Q}_p(\zeta_{q-1})^N$ denotes the Teichm\"uller lifting of $\lambda$.
\end{proposition}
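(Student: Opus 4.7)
The plan is to reduce the proposition to a conjugation identity between $\mathcal{F}(\Lambda)$ and $\mathcal{G}(\Lambda)$ by a well-behaved matrix $K(\Lambda)$, and then exploit the telescoping cancellation caused by the fact that $\hat\lambda$ is a Teichm\"uller lifting.

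First, I would extract from the proof of Proposition~9.1 the identity
\[ G(\Lambda)^{-1}F(\Lambda) = I + H(\Lambda), \]
together with the fact (established in that proof) that $H(\Lambda)$ has entries in $R'_{\bf 0}$ of norm $<1$. Setting $K(\Lambda) := G(\Lambda)^{-1}F(\Lambda)$, the geometric series argument already used in Section~9 shows that $K(\Lambda)$ is invertible in ${\rm Mat}_M(R'_{\bf 0})$; that is, both $K(\Lambda)$ and $K(\Lambda)^{-1}=F(\Lambda)^{-1}G(\Lambda)$ have all entries in $R'_{\bf 0}$.

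Next I would compute directly
\[ \mathcal{F}(\Lambda) = F(\Lambda^p)^{-1}F(\Lambda) = \bigl(F(\Lambda^p)^{-1}G(\Lambda^p)\bigr)\bigl(G(\Lambda^p)^{-1}G(\Lambda)\bigr)\bigl(G(\Lambda)^{-1}F(\Lambda)\bigr) = K(\Lambda^p)^{-1}\,\mathcal{G}(\Lambda)\,K(\Lambda). \]
Since $\mathcal{G}(\Lambda)$ has entries in $R'_{\bf 0}$ by Corollary~8.13, and $K(\Lambda), K(\Lambda^p)^{-1}$ do as well, this already proves the first assertion.

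For the second assertion, note that because $\hat\lambda$ is the Teichm\"uller lifting of $\lambda \in\mathbb{F}_q^N$, we have $\hat\lambda^{p^a}=\hat\lambda$. Writing $\lambda_i := \hat\lambda^{p^i}$, the conjugation identity yields $\mathcal{F}(\lambda_i) = K(\lambda_{i+1})^{-1}\mathcal{G}(\lambda_i)K(\lambda_i)$ (evaluation is meaningful because $\lambda_i\in\mathcal{D}$ by Remark~1 after Theorem~1.10 and is a ring homomorphism on $R'_{\bf 0}$ by construction in Section~2). Multiplying these identities for $i=0,1,\dots,a-1$ gives a telescoping product
\[ \mathcal{F}(\lambda_{a-1})\cdots\mathcal{F}(\lambda_0) = K(\lambda_a)^{-1}\mathcal{G}(\lambda_{a-1})\cdots\mathcal{G}(\lambda_0)K(\lambda_0) = K(\lambda_0)^{-1}\mathcal{G}(\lambda_{a-1})\cdots\mathcal{G}(\lambda_0)K(\lambda_0), \]
using $\lambda_a=\lambda_0$ in the last step. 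Taking $\det(I-t\cdot)$ and using invariance of the characteristic polynomial under conjugation gives the desired equality.

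The only slightly delicate point is to verify that evaluation at $\lambda_i\in\mathcal{D}$ really commutes with inversion, that is, $K(\lambda_i)^{-1}$ coincides with $(K^{-1})(\lambda_i)$. This is where I expect the main (but mild) obstacle: it follows because $K$ and $K^{-1}$ both have entries in $R'_{\bf 0}$ and evaluation $R'_{\bf 0}\to\mathbb{C}_p$ is a ring homomorphism (limits of $h_k/D^k$ evaluated term-by-term, as in Lemma~2.8 and the remarks that follow), so $K(\lambda_i)K^{-1}(\lambda_i)=(K K^{-1})(\lambda_i) = I$. Once this is noted, the telescoping argument goes through verbatim.
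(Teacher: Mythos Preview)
Your proposal is correct and follows essentially the same argument as the paper: your matrix $K(\Lambda)=G(\Lambda)^{-1}F(\Lambda)$ is exactly the inverse of the paper's $\mathcal{H}(\Lambda)=F(\Lambda)^{-1}G(\Lambda)$, and your conjugation identity $\mathcal{F}(\Lambda)=K(\Lambda^p)^{-1}\mathcal{G}(\Lambda)K(\Lambda)$ is the paper's (9.6). The only cosmetic difference is that the paper performs the telescoping at the level of formal series in $\Lambda$ (obtaining an identity (9.7) in $R'_{\bf 0}$) and then evaluates once at $\hat\lambda$, whereas you evaluate each factor first and then telescope; your extra remark about evaluation being a ring homomorphism is precisely what justifies either ordering.
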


\begin{proof}
We showed in the proof of Proposition 9.1 that the matrix ${\mathcal H}(\Lambda):=F(\Lambda)^{-1}G(\Lambda)$ and its inverse ${\mathcal H}(\Lambda)^{-1} = G(\Lambda)^{-1}F(\Lambda)$ have entries in~${R}_{\bf 0}'$.  We then have
\begin{equation}
{\mathcal F}(\Lambda) = F(\Lambda^p)^{-1}F(\Lambda) = {\mathcal H}(\Lambda^p)\big(G(\Lambda^p)^{-1}G(\Lambda)\big) {\mathcal H}(\Lambda)^{-1} = {\mathcal H}(\Lambda^p){\mathcal G}(\Lambda){\mathcal H}(\Lambda)^{-1}, 
\end{equation}
which implies the first assertion of the proposition since ${\mathcal G}(\Lambda)$ has entries in $R_{\bf 0}'$ by Corollary~8.13.  Eq.~(9.6) implies
\begin{equation}
{\mathcal H}(\Lambda^{p^a})^{-1}{\mathcal F}(\Lambda^{p^{a-1}}){\mathcal F}(\Lambda^{p^{a-2}})\cdots{\mathcal F}(\Lambda){\mathcal H}(\Lambda) = {\mathcal G}(\Lambda^{p^{a-1}}){\mathcal G}(\Lambda^{p^{a-2}})\cdots{\mathcal G}(\Lambda). 
\end{equation}
The Teichm\"uller lifting $\hat{\lambda}$ satisfies $\hat{\lambda}^{p^a} = \hat{\lambda}$ and ${\mathcal H}(\Lambda)$ is a function on ${\mathcal D}$, so the second assertion of the proposition follows by evaluating (9.7) at $\Lambda = \hat{\lambda}$.  
\end{proof}

The first assertion of Proposition 9.5 implies the first assertion of Theorem~2.11.  The second assertion of Proposition 9.5 shows that the second assertion of Theorem~2.11 is equivalent to the following statement.
\begin{theorem}
Let $\lambda\in ({\mathbb F}_q^\times)^M\times{\mathbb F}_q^{N-M}$ and let $\hat{\lambda}\in{\mathbb Q}_p(\zeta_{q-1})^N$ be its Teichm\"uller lifting.  If $\bar{D}(\lambda)\neq 0$, then $\hat{\lambda}^{p^i}\in{\mathcal D}$ for $i=0,\dots,a-1$ and 
\[ \rho(\lambda,t) = \det\big( I-t{\mathcal G}(\hat{\lambda}^{p^{a-1}}){\mathcal G}(\hat{\lambda}^{p^{a-1}})\cdots{\mathcal G}(\hat{\lambda})\big). \]
\end{theorem}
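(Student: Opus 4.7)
The plan is to realize $\rho(\lambda,t)$ via Dwork's $p$-adic cohomology as the reverse characteristic polynomial of the appropriately normalized $a$-fold Dwork--Frobenius iterate acting on the unit-root subspace of the specialized cohomology, and then to identify that matrix explicitly with ${\mathcal G}(\hat\lambda^{p^{a-1}})\cdots{\mathcal G}(\hat\lambda)$ using the eigenvector construction of Section~8.

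First I would invoke the Dwork-cohomological interpretation. After specialization $\Lambda\mapsto\hat\lambda$, the operator $\alpha^*$ on $S$ induces a completely continuous operator $\alpha^*_{\hat\lambda}$ on a $p$-adic Banach space obtained as the quotient of $S$ by the specialization ideal. The same trace/Fredholm formalism that underlies Proposition~1.3 (i.e.\ \cite[Theorem~1.4]{AS}) expresses $P_\lambda(t)=Q_\lambda(t)/R_\lambda(t)$ as an alternating product of Fredholm determinants of the $a$-fold iterate $\alpha^*_{\hat\lambda,a}$ (the composition of $\alpha^*$ with the arithmetic Frobenius on ${\mathbb Q}_p(\zeta_{q-1})$, iterated $a$ times so that $\hat\lambda^{p^a}=\hat\lambda$) on the relevant Koszul graded pieces. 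In particular $\rho(\lambda,t)$ is the characteristic polynomial of the slope-zero part of the suitably normalized Frobenius on middle cohomology; Corollary~1.6 and the hypothesis $\bar D(\lambda)\neq 0$ force this slope-zero part to have dimension exactly $M$.

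Second I would exploit the eigenvector identity supplied by Corollary~8.8. Using the semilinearity $\alpha^*(A(\Lambda)\xi(\Lambda,x))=A(\Lambda^p)\alpha^*(\xi(\Lambda,x))$ valid for any matrix $A(\Lambda)$ of $\Lambda$-only series, one has
\[ \alpha^*\bigl({\mathcal G}(\Lambda,x)\bigr)=G(\Lambda^p)^{-1}\alpha^*\bigl(G(\Lambda,x)\bigr)=p\,{\mathcal G}(\Lambda)\,{\mathcal G}(\Lambda,x). \]
Iterating $a$ times and specializing at $\hat\lambda$ yields
\[ \alpha^*_{\hat\lambda,a}\bigl({\mathcal G}(\hat\lambda,x)\bigr)=p^a\,{\mathcal G}(\hat\lambda^{p^{a-1}})\cdots{\mathcal G}(\hat\lambda)\,{\mathcal G}(\hat\lambda,x). \]
Since ${\mathbb M}({\mathcal G}(\hat\lambda,x))=I$ (the fixed-point property from Section~6), the $M$ columns of ${\mathcal G}(\hat\lambda,x)$ remain linearly independent after specialization and span an $\alpha^*_{\hat\lambda,a}$-stable subspace $U_\lambda$. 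After dividing by $p^a$ to convert from $\alpha^*$ to the cohomological Frobenius of Dwork, the matrix on $U_\lambda$ is precisely ${\mathcal G}(\hat\lambda^{p^{a-1}})\cdots{\mathcal G}(\hat\lambda)$; by Proposition~8.14 its eigenvalues are $p$-adic units, so $U_\lambda$ lies inside the unit-root subspace.

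Third I would show that $U_\lambda$ exhausts the unit-root subspace, which combined with the previous step yields the asserted equality. By Corollary~1.6 the unit-root reciprocal eigenvalues of $P_\lambda(t)$ coincide, modulo~$p$, with the eigenvalues of $\bar B(\lambda^{p^{a-1}})\cdots\bar B(\lambda)$, so it suffices to verify the congruence ${\mathcal G}(\hat\lambda)\equiv\bar B(\lambda)\pmod p$. The estimate (3.6) forces $\hat\theta_1(t)\equiv 1\pmod p$, which via (7.18) gives $G^{(i)}_u\equiv F^{(i)}_u\pmod p$, while \cite[Proposition~3.8]{AS0} identifies the $F_{ij}$ with the $B_{ij}$ by truncation modulo~$p$. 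Together with the equality $\dim U_\lambda=M=\deg\rho(\lambda,t)$, this pins down $U_\lambda$ as the full unit-root subspace and forces $\rho(\lambda,t)=\det(I-t\,{\mathcal G}(\hat\lambda^{p^{a-1}})\cdots{\mathcal G}(\hat\lambda))$. The main obstacle is step one: rigorously setting up the specialized Dwork cohomology, verifying complete continuity of $\alpha^*_{\hat\lambda,a}$, and correctly matching the $p^a$ normalization between $\alpha^*$ and the cohomological Frobenius whose characteristic polynomial is $P_\lambda(t)$, so that the identification of $U_\lambda$ with the unit-root direct summand is unambiguous.
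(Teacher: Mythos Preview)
Your approach is essentially the paper's: specialize the eigenvector identity $\alpha^*({\mathcal G}(\Lambda,x))=p\,{\mathcal G}(\Lambda)\,{\mathcal G}(\Lambda,x)$ at $\hat\lambda$ to produce an $M$-dimensional $\alpha^*_{\hat\lambda}$-stable subspace of a Banach space on which the Dwork trace formula computes $P_\lambda(t)$, observe via Proposition~8.14 that the eigenvalues of the ${\mathcal G}$-product are units, and then count.

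Two remarks on execution. For your step one, the paper does not take a quotient of $S$; it works on the concrete space $B$ of Section~10 and imports the trace formula and adjointness from \cite{AS1} (Proposition~7.13 and Proposition~7.30 there), so that Corollary~10.8 reads: the unit reciprocal roots of $P_\lambda(t)$ are exactly the $q$-ordinal-$1$ reciprocal roots of $\det(I-t\alpha^*_{\hat\lambda}\mid B)$ divided by~$q$. That is the precise bridge you were looking for. For your step three, the mod~$p$ congruence ${\mathcal G}(\hat\lambda)\equiv\bar B(\lambda)$ is a detour that is neither necessary nor sufficient: two $M$-tuples of $p$-adic units can agree modulo~$p$ without being equal. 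The paper's argument is pure counting---once step two shows that $\det(I-qt\,{\mathcal G}(\hat\lambda^{p^{a-1}})\cdots{\mathcal G}(\hat\lambda))$ divides $\det(I-t\alpha^*_{\hat\lambda}\mid B)$ and contributes $M$ reciprocal roots of $q$-ordinal~$1$, and Corollary~10.8 together with the hypothesis $\bar D(\lambda)\neq 0$ says there are exactly $M$ such roots, equality is forced.
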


\section{Proof of Theorem 9.8}

We apply Dwork's $p$-adic cohomology theory to prove Theorem 9.8.  

We begin by recalling the formula for the rational function $P_\lambda(t)$ that was proved in \cite{AS1}.  For a subset $I\subseteq\{0,1,\dots,n\}$, let
\begin{multline*}
L^I = \bigg\{ \sum_{u\in{\mathbb N}^{n+2}} c_u\gamma_0^{pu_{n+1}}x^u\mid \text{$\sum_{i=0}^n u_i = du_{n+1}$, $u_i>0$ for $i\in I$, $c_u\in{\mathbb C}_p$,} \\ \text{and $\{c_u\}$ is bounded}\bigg\}. 
\end{multline*}
Put $g_\lambda = x_{n+1}f_\lambda$, so that
\[ g_\lambda(x_0,\dots,x_{n+1}) = \sum_{k=1}^N \lambda_kx^{{\bf a}_k} \in{\mathbb F}_q[x_0,\dots,x_{n+1}], \]
and let $\hat{g}_\lambda$ be its Teichm\"uller lifting:
\[ \hat{g}_\lambda(x_0,\dots,x_{n+1}) = \sum_{k=1}^N\hat{\lambda}_kx^{{\bf a}_k}\in{\mathbb Q}(\zeta_{q-1})[x_0,\dots,x_{n+1}]. \]
From (3.15) we have 
\begin{equation}
\theta(\hat{\lambda},x) = \prod_{j=1}^N \theta(\hat{\lambda}_jx^{{\bf a}_j}). 
\end{equation}
We also need the series $\theta_0(\hat{\lambda},x)$ defined by
\begin{equation}
\theta_0(\hat{\lambda},x) = \prod_{i=0}^{a-1} \prod_{j=1}^N \theta\big((\hat{\lambda}_jx^{{\bf a}_j})^{p^i}\big) = \prod_{i=0}^{a-1}\theta(\hat{\lambda}^{p^i},x^{p^i}).
\end{equation}
Define an operator $\psi$ on formal power series by
\begin{equation}
\psi\bigg(\sum_{u\in{\mathbb N}^{n+2}} c_ux^u\bigg) = \sum_{u\in{\mathbb N}^{n+2}} c_{pu}x^u.
\end{equation}
Denote by $\alpha_{\hat{\lambda}}$ the composition
\[ \alpha_{{\hat{\lambda}}} := \psi^a\circ\text{``multiplication by $\theta_0(\hat{\lambda},x)$.''} \]

The map $\alpha_{\hat{\lambda}}$ is stable on $L^I$ for all $I$ and we have (\cite[Equation~(7.12)]{AS1})
\begin{equation}
P_\lambda(qt) = \prod_{I\subseteq\{0,1,\dots,n\}} 
\det(I-q^{n+1-|I|}t\alpha_{\hat{\lambda}}\mid L_0^I)^{(-1)^{n+1+|I|}}.
\end{equation}
For notational convenience, put $\Gamma = \{0,1,\dots,n\}$.  The following lemma is an immediate consequence of \cite[Proposition~7.13(b)]{AS1} (note that since we are assuming $d\geq n+1$, we have $\mu=0$ in that proposition).
\begin{lemma}
The unit reciprocal roots of $P_\lambda(t)$ are obtained from the reciprocal roots of $\det(I-t\alpha_{\hat{\lambda}}\mid L_0^\Gamma)$ of $q$-ordinal equal to $1$ by division by $q$.
\end{lemma}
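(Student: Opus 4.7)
The plan is to extract the unit reciprocal roots of $P_\lambda(t)$ from the cohomological product formula (10.4) by applying the slope bounds of \cite[Proposition~7.13(b)]{AS1} to kill the contribution of every factor except the one indexed by $I=\Gamma$.

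First I would translate the problem to $P_\lambda(qt)$: a reciprocal root $\pi$ of $P_\lambda(t)$ is a $p$-adic unit precisely when $q\pi$, the corresponding reciprocal root of $P_\lambda(qt)$, has $q$-ordinal exactly $1$. So it suffices to show that the reciprocal roots of $P_\lambda(qt)$ of $q$-ordinal $1$ coincide (as a multiset, with signs coming from the exponents $(-1)^{n+1+|I|}$) with the reciprocal roots of $\det(I - t\alpha_{\hat\lambda}\mid L_0^\Gamma)$ of $q$-ordinal $1$. The factor at $I=\Gamma$ has $n+1-|I|=0$ and sign $+1$, so it contributes its reciprocal roots unchanged to $P_\lambda(qt)$, including those of $q$-ordinal $1$.

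The main step is to control the remaining factors. For $I\subsetneq\Gamma$ one has $n+1-|I|\geq 1$, and the reciprocal roots of $\det(I-q^{n+1-|I|}t\alpha_{\hat\lambda}\mid L_0^I)$ are of the form $q^{n+1-|I|}\pi'$, where $\pi'$ is a reciprocal root of $\det(I-t\alpha_{\hat\lambda}\mid L_0^I)$. Here I invoke \cite[Proposition~7.13(b)]{AS1} with $\mu=0$, which is the case permitted by the standing hypothesis $d\geq n+1$ already recorded in the excerpt: that result provides lower bounds on the $q$-ordinals of the $\pi'$ strong enough that, after the twist by $q^{n+1-|I|}$, every resulting reciprocal root has $q$-ordinal strictly greater than $1$. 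Consequently these factors, whether they lie in the numerator or the denominator of (10.4), contribute no slope-$1$ reciprocal roots to $P_\lambda(qt)$ and cannot cancel any slope-$1$ contribution coming from the $I=\Gamma$ factor.

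Assembling the pieces, the multisets of reciprocal roots of $q$-ordinal $1$ of $P_\lambda(qt)$ and of $\det(I - t\alpha_{\hat\lambda}\mid L_0^\Gamma)$ coincide, and division by $q$ then yields the unit reciprocal roots of $P_\lambda(t)$. The only genuinely analytic input is the slope estimate \cite[Proposition~7.13(b)]{AS1}, which is where the principal obstacle would lie if one tried to argue without citing \cite{AS1}; granting that estimate, the remainder of the argument is straightforward bookkeeping on the product formula (10.4).
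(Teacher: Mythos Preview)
Your proposal is correct and follows exactly the approach the paper indicates: the paper states the lemma as an immediate consequence of \cite[Proposition~7.13(b)]{AS1} with $\mu=0$ (the case $d\geq n+1$), and you have simply spelled out the bookkeeping on the product formula~(10.4) that the paper leaves implicit. There is no substantive difference in strategy.
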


We give an alternate description of $\det(I-t\alpha_{\hat{\lambda}}\mid L_0^\Gamma)$ (\cite[Section~7]{AS1}).
Set
\[ B = \bigg\{ \xi^* = \sum_{u\in ({\mathbb Z}_{<0})^{n+2}} c_u^*\gamma_0^{pu_{n+1}}x^u\mid \text{$c_u^*\to 0$ as $u\to-\infty$}\bigg\}, \]
a $p$-adic Banach space with norm $\lvert\xi^*\rvert = \sup_{u\in ({\mathbb Z}_{<0})^{n+2}}\{\lvert c_u^*\rvert\}$. 
Define a map $\Phi$ on formal power series by 
\[ \Phi\bigg(\sum_{u\in{\mathbb Z}^n} c_ux^u\bigg) = \sum_{u\in{\mathbb Z}^n} c_ux^{pu}. \]
Consider the formal composition $\alpha^*_{\hat{\lambda}} = \delta_-\circ\theta_0(\hat{\lambda},x)\circ\Phi^a$.  The following result is \cite[Proposition 7.30]{AS1}.
\begin{proposition}
The operator $\alpha^*_{\hat{\lambda}}$ is an endomorphism of $B$ which is adjoint to 
$\alpha_{\hat{\lambda}}:L_0^\Gamma\to L_0^\Gamma$.
\end{proposition}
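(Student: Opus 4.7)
The plan is to establish (i) that $\alpha^{*}_{\hat{\lambda}}$ maps $B$ into itself, and (ii) the adjointness identity $\langle\alpha_{\hat{\lambda}}\xi,\xi^{*}\rangle = \langle\xi,\alpha^{*}_{\hat{\lambda}}\xi^{*}\rangle$ for all $\xi\in L_{0}^{\Gamma}$ and $\xi^{*}\in B$, where $\langle\,\cdot\,,\,\cdot\,\rangle$ is the standard Dwork residue pairing: essentially the coefficient of $x^{-\mathbf{1}}$ in the formal product $\xi\xi^{*}$, with $\mathbf{1}=(1,\ldots,1)\in\mathbb{Z}^{n+2}$. This pairing is well-defined because for $u$ in the support of $\xi$ (coordinates in $\mathbb{N}$, strictly positive for $i\in\Gamma$) the unique contributor is $v=-\mathbf{1}-u\in(\mathbb{Z}_{<0})^{n+2}$, and the $p$-adic series $\sum_{u} c_{u}c^{*}_{-\mathbf{1}-u}$ converges by boundedness of $\{c_{u}\}$ combined with the decay $c^{*}_{v}\to 0$.

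For (i), I would unwind $\alpha^{*}_{\hat{\lambda}}\xi^{*}=\delta_{-}\bigl(\theta_{0}(\hat{\lambda},x)\cdot\Phi^{a}(\xi^{*})\bigr)$ coefficient by coefficient. The operator $\Phi^{a}$ sends $x^{v}$ to $x^{p^{a}v}$ while leaving coefficients intact; multiplication by $\theta_{0}(\hat{\lambda},x)=\prod_{i=0}^{a-1}\theta(\hat{\lambda}^{p^{i}},x^{p^{i}})$ introduces a positive-degree factor whose coefficients have ordinals controlled by (3.19); and $\delta_{-}$ retains only monomials all of whose coordinates are strictly negative. A fixed target monomial $x^{v}$ receives contributions from pairs $(w,v')$ with $w\in\mathbb{N}A$, $v'\in(\mathbb{Z}_{<0})^{n+2}$, and $w+p^{a}v'=v$; the ordinal bound on the $\theta$-coefficients shows the contribution from $(w,v')$ decays with $|w|$ (equivalently with $|v'|$), and together with $c^{*}_{v'}\to 0$ this yields both convergence of each coefficient of the output and decay of those coefficients as $v\to-\infty$. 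Hence $\alpha^{*}_{\hat{\lambda}}\xi^{*}\in B$.

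The heart of (ii) is the algebraic identity
\[
\psi^{a}\bigl(A(x)\cdot B(x^{p^{a}})\bigr) \;=\; \psi^{a}(A)(x)\cdot B(x),
\]
which is immediate from the definitions of $\psi^{a}$ and $\Phi^{a}$. Applied with $A=\theta_{0}(\hat{\lambda},x)\,\xi$ and $B=\xi^{*}$, it rewrites $\alpha_{\hat{\lambda}}(\xi)\cdot\xi^{*}=\psi^{a}(\theta_{0}\xi)\cdot\xi^{*}$ as $\psi^{a}\bigl(\theta_{0}\,\xi\cdot\Phi^{a}\xi^{*}\bigr)$. Because $\xi\in L_{0}^{\Gamma}$ has all exponents in $\mathbb{N}^{n+2}$, the ``positive part'' $\theta_{0}\Phi^{a}\xi^{*}-\delta_{-}(\theta_{0}\Phi^{a}\xi^{*})$ contributes nothing to the residue-pairing against $\xi$, so the outer $\delta_{-}$ appearing in $\alpha^{*}_{\hat{\lambda}}$ is invisible for the purpose of evaluating the pairing. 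The residue-extraction rule for $\psi^{a}$ then aligns the two sides, giving the adjointness.

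The main obstacle is rigor in these manipulations: $\theta_{0}\xi$ is supported on coordinatewise-nonnegative exponents while $\Phi^{a}\xi^{*}$ is supported on coordinatewise-negative ones, so their product is a genuinely two-sided formal Laurent series in $x_{0},\ldots,x_{n+1}$ in which every monomial coefficient is an infinite sum. The $p$-adic estimates (3.1), (3.6), (3.19) on the Artin--Hasse coefficients, together with the decay $c^{*}_{v}\to 0$, are precisely what guarantees that each of these coefficient sums converges in $\mathbb{C}_{p}$ and that all the intermediate rearrangements preserve equality, in the same style of bookkeeping as in \cite[Section~5]{AS2}. The assertion itself is \cite[Proposition~7.30]{AS1}, to which one can also simply delegate the full detailed argument.
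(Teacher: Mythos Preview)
Your sketch is correct and is precisely the standard Dwork adjointness argument; the paper itself gives no proof here but simply cites \cite[Proposition~7.30]{AS1}, which is exactly what you invoke at the end. Since the paper's ``proof'' is a bare citation and your outlined argument is the one that reference contains, the two approaches coincide.
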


From Proposition 10.6, it follows by Serre\cite[Proposition~15]{S} that
\begin{equation}
\det(I-t\alpha_{\hat{\lambda}}\mid L_0^\Gamma) = \det(I-t\alpha^*_{\hat{\lambda}}\mid B),
\end{equation}
so Lemma 10.5 gives the following result.
\begin{corollary}
The unit reciprocal roots of $P_\lambda(t)$ are obtained from the reciprocal roots of $\det(I-t\alpha^*_{\hat{\lambda}}\mid B)$ of $q$-ordinal equal to $1$ by division by $q$.
\end{corollary}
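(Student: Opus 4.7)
My plan is to combine Corollary~10.7 with the eigenvector construction of Section~8: the idea is to identify inside the $p$-adic Banach space $B$ a distinguished $M$-dimensional $\alpha^*_{\hat\lambda}$-stable subspace whose characteristic polynomial is $\det(I - tq M_0)$, where $M_0 := {\mathcal G}(\hat\lambda^{p^{a-1}})\cdots{\mathcal G}(\hat\lambda)$, and then to argue that this subspace accounts for \emph{all} reciprocal roots of the Fredholm determinant $\det(I-t\alpha^*_{\hat\lambda}\mid B)$ whose $q$-ordinal equals $1$.

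The first step is the compatibility between the parametric Frobenius $\alpha^*$ on $S$ and the specialized Frobenius $\alpha^*_{\hat\lambda}$ on $B$: for $\xi(\Lambda,x)\in S'$,
\[
\alpha^*_{\hat\lambda}\bigl(\xi(\hat\lambda,x)\bigr) = \bigl[(\alpha^*)^a(\xi)\bigr](\hat\lambda,x).
\]
This follows by iterating $\alpha^*(\eta)=\delta_-(\theta(\Lambda,x)\eta(\Lambda^p,x^p))$ a total of $a$ times, using~(4.1) to push intermediate $\delta_-$'s through $\theta$ (which has no negative $x$-exponents), collecting $\prod_{i=0}^{a-1}\theta(\hat\lambda^{p^i},x^{p^i}) = \theta_0(\hat\lambda,x)$ on specialization, and using $\hat\lambda^{p^a}=\hat\lambda$ with the definition of $\Phi^a$. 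Iterating Corollary~8.8 together with the coefficient semi-linearity of $\alpha^*$ (which replaces a matrix factor $Y(\Lambda)$ by $Y(\Lambda^p)$) then yields
\[
(\alpha^*)^a\bigl({\mathcal G}(\Lambda,x)\bigr) = p^a\,{\mathcal G}(\Lambda^{p^{a-1}})\cdots{\mathcal G}(\Lambda^p){\mathcal G}(\Lambda)\,{\mathcal G}(\Lambda,x),
\]
so specialization at $\hat\lambda$ gives $\alpha^*_{\hat\lambda}({\mathcal G}(\hat\lambda,x)) = qM_0\,{\mathcal G}(\hat\lambda,x)$. Since ${\mathbb M}({\mathcal G}(\Lambda,x))=I$, the coefficient of $\gamma_0^{-1}x^{-{\bf a}_j}$ in ${\mathcal G}^{(i)}(\hat\lambda,x)$ is $\delta_{ij}\hat\lambda_j^{-1}$, so the $M$ vectors ${\mathcal G}^{(i)}(\hat\lambda,x)$ are linearly independent in $B$ and span an $M$-dimensional $\alpha^*_{\hat\lambda}$-stable subspace $V$ on which $\alpha^*_{\hat\lambda}$ is represented by $qM_0$.

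The matrix ${\mathcal G}(\Lambda)$ has entries in $R'_{\bf 0}$ of norm $\leq 1$ by (7.24)--(7.25), so by (2.12) the matrix $M_0$ is $p$-integral; Proposition~8.14 together with the multiplicativity of determinant gives $|\det M_0|=1$, so $\det(I - tM_0)$ has $M$ reciprocal roots, all of which are $p$-adic units. Hence $\det(I-tqM_0)$ has $M$ reciprocal roots, all of $q$-ordinal exactly~$1$. Because $\alpha^*_{\hat\lambda}$ is completely continuous on $B$ and $V$ is a finite-dimensional stable subspace, Serre's Fredholm determinant formalism \cite{S} supplies the divisibility
\[
\det(I-t\alpha^*_{\hat\lambda}\mid V) \;\Big|\; \det(I-t\alpha^*_{\hat\lambda}\mid B),
\]
so the $M$ reciprocal roots of $qM_0$ appear among those of the Fredholm determinant.

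To close the argument, note that $\bar D(\lambda)\neq 0$ forces $\bar B(\lambda)$ invertible, so Corollary~1.6 implies that $Q_\lambda(t)$ has $M$ unit reciprocal roots, hence $P_\lambda(t)$ has exactly $M$ such; by Corollary~10.7, $\det(I-t\alpha^*_{\hat\lambda}\mid B)$ therefore has at most $M$ reciprocal roots of $q$-ordinal~$1$. Combined with the previous step, those $M$ reciprocal roots must be precisely the reciprocal roots of $qM_0$, so division by $q$ identifies the unit reciprocal roots $\{\pi_j(\lambda)\}_{j=1}^M$ of $P_\lambda(t)$ with the reciprocal roots of $M_0$, giving
\[
\rho(\lambda,t) = \det(I-tM_0) = \det\bigl(I-t\,{\mathcal G}(\hat\lambda^{p^{a-1}})\cdots{\mathcal G}(\hat\lambda)\bigr).
\]
The main obstacle I expect is the Fredholm divisibility step: one has to verify carefully, using a Banach basis of $B$ extending a basis of $V$, that the characteristic polynomial of $\alpha^*_{\hat\lambda}\mid V$ divides the full Fredholm determinant in this infinite-dimensional setting.
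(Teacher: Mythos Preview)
Your proposal does not address the stated corollary. The corollary asserts only that the unit reciprocal roots of $P_\lambda(t)$ correspond, after multiplication by $q$, to the reciprocal roots of the Fredholm determinant $\det(I-t\alpha^*_{\hat\lambda}\mid B)$ of $q$-ordinal equal to~$1$. In the paper this is a one-line consequence of what precedes it: Lemma~10.5 gives exactly the same statement with $\det(I-t\alpha_{\hat\lambda}\mid L_0^\Gamma)$ in place of the determinant on $B$, and Equation~(10.7)---which follows from the adjointness in Proposition~10.6 together with \cite[Proposition~15]{S}---identifies those two determinants. No eigenvector construction, no matrix ${\mathcal G}(\Lambda)$, and no hypothesis $\bar D(\lambda)\neq 0$ enter.

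What you have actually sketched is a proof of Theorem~9.8 (equivalently, of Proposition~10.9), and your argument \emph{uses} the very corollary you were asked to prove: the step ``by Corollary~10.7, $\det(I-t\alpha^*_{\hat\lambda}\mid B)$ therefore has at most $M$ reciprocal roots of $q$-ordinal~$1$'' is precisely an application of the present statement. Relative to the stated goal your argument is therefore circular. As an outline of Theorem~9.8 it is close to the paper's own route via Proposition~10.9 and Equations~(10.10)--(10.12), but it is not a proof of this corollary.
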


We saw in Section 1 that $P_\lambda(t)$ has exactly $M$ unit roots when $\lambda\in({\mathbb F}_q^\times)^M\times{\mathbb F}_q^{N-M}$ and $\bar{D}(\lambda)\neq 0$.  By Proposition~8.14, the eigenvalues of the $(M\times M)$-matrix ${\mathcal G}(\lambda)$ are units for all $\lambda\in{\mathcal D}$.  So to prove Theorem~9.8, it suffices to establish the following result.
\begin{proposition}
Let $\lambda\in ({\mathbb F}_q^\times)^M\times{\mathbb F}_q^{N-M}$ and let $\hat{\lambda}\in{\mathbb Q}_p(\zeta_{q-1})^N$ be its Teichm\"uller lifting.  Assume that $\bar{D}(\lambda)\neq 0$.  Then $\hat{\lambda}^{p^i}\in{\mathcal D}$ for $i=0,\dots,a-1$ and $\det\big( I-qt{\mathcal G}(\hat{\lambda}^{p^{a-1}}){\mathcal G}(\hat{\lambda}^{p^{a-1}})\cdots{\mathcal G}(\hat{\lambda})\big)$ is a factor of $\det(I-t\alpha^*_{\hat{\lambda}}\mid B)$.
\end{proposition}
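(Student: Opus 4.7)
The strategy is to transfer the eigenvector identity for $\alpha^*$ on $(S')^M$ (Corollary~8.8) into spectral information for $\alpha^*_{\hat\lambda}$ on~$B$ by iterating $a$ times and specializing at $\Lambda=\hat\lambda$, then extracting a factor of the Fredholm determinant; the assertion $\hat\lambda^{p^i}\in{\mathcal D}$ is immediate from Remark~1 following Theorem~1.10, so the substance is the factorization of characteristic polynomials. From Corollary~8.8 and the computation~(8.9), inserting $G(\Lambda)G(\Lambda)^{-1}$ gives
\[ \alpha^*\bigl({\mathcal G}(\Lambda,x)\bigr) = p\,{\mathcal G}(\Lambda)\,{\mathcal G}(\Lambda,x). \]
Iterating, and using that $\alpha^*$ replaces $\Lambda$ by $\Lambda^p$ inside purely $\Lambda$-dependent factors, yields
\[ (\alpha^*)^a\bigl({\mathcal G}(\Lambda,x)\bigr) = p^a\,{\mathcal G}(\Lambda^{p^{a-1}}){\mathcal G}(\Lambda^{p^{a-2}})\cdots{\mathcal G}(\Lambda)\,{\mathcal G}(\Lambda,x). \]
In parallel, iterated application of~(4.1) — valid because each $\theta(\Lambda^{p^i},x^{p^i})$ has only non-negative $x$-exponents — collapses the nested $\delta_-$'s into $(\alpha^*)^a(\xi(\Lambda,x)) = \delta_-\bigl(\prod_{i=0}^{a-1}\theta(\Lambda^{p^i},x^{p^i})\cdot\xi(\Lambda^{p^a},x^{p^a})\bigr)$; specializing $\Lambda=\hat\lambda$ and using $\hat\lambda^{p^a}=\hat\lambda$ together with~(10.2), the right-hand side becomes $\alpha^*_{\hat\lambda}(\xi(\hat\lambda,x))$.

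I would then verify two technical points. First, each ${\mathcal G}^{(i)}(\hat\lambda,x)$ is a well-defined element of~$B$: since ${\mathcal G}(\Lambda,x)\in T'$ we have $\lvert{\mathcal G}^{(i)}_u(\Lambda)\rvert\le 1$, so~(2.12) gives $\lvert{\mathcal G}^{(i)}_u(\hat\lambda)\rvert\le 1$, and rewriting the series in the $\gamma_0^{pu_{n+1}}$-normalization used to define~$B$ introduces a factor $\gamma_0^{(1-p)u_{n+1}}$ of absolute value $p^{u_{n+1}}$, which tends to~$0$ as $u_{n+1}\to-\infty$ (and hence as $u\to-\infty$ in~${\mathcal M}_-$, since $\sum_{i=0}^n u_i=du_{n+1}$ with all $u_i<0$). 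Second, the vectors ${\mathcal G}^{(1)}(\hat\lambda,x),\dots,{\mathcal G}^{(M)}(\hat\lambda,x)$ are linearly independent in~$B$: the identity ${\mathbb M}({\mathcal G}(\Lambda,x))=I$ forces ${\mathcal G}^{(i)}_{-{\bf a}_j}(\hat\lambda)=\delta_{ij}\hat\lambda_j^{-1}$, which is a unit, so the matrix of leading coefficients is invertible. Hence they span a closed $M$-dimensional subspace $V\subseteq B$.

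Combining these observations, $V$ is $\alpha^*_{\hat\lambda}$-stable with matrix representation $q\cdot{\mathcal G}(\hat\lambda^{p^{a-1}})\cdots{\mathcal G}(\hat\lambda)$ (up to transposition, which does not affect the characteristic polynomial). Since $\alpha^*_{\hat\lambda}$ is completely continuous on~$B$,
\[ \det(I-t\alpha^*_{\hat\lambda}\mid B) = \det(I-t\alpha^*_{\hat\lambda}\mid V)\cdot\det(I-t\alpha^*_{\hat\lambda}\mid B/V), \]
and the first factor on the right is precisely $\det\bigl(I-qt\,{\mathcal G}(\hat\lambda^{p^{a-1}})\cdots{\mathcal G}(\hat\lambda)\bigr)$, giving the required divisibility. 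The main obstacle is the composition formula for $(\alpha^*)^a$ and its specialization at $\Lambda=\hat\lambda$: one must carefully justify the formal multiplication of series with mixed-sign $x$-exponents (the $\theta$-factors have non-negative exponents while $\xi$ has only negative ones, echoing the non-commutativity caveat at the end of the proof of Theorem~8.5) and verify that the nested $\delta_-$'s collapse via~(4.1); the remaining steps are routine.
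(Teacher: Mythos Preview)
Your proposal is correct and follows essentially the same approach as the paper: both derive the eigenvector identity $\alpha^*({\mathcal G}(\Lambda,x))=p\,{\mathcal G}(\Lambda){\mathcal G}(\Lambda,x)$, iterate $a$ times, specialize at $\Lambda=\hat\lambda$ using $\hat\lambda^{p^a}=\hat\lambda$ to identify $(\alpha^*)^a$ with $\alpha^*_{\hat\lambda}$, and verify that the ${\mathcal G}^{(i)}(\hat\lambda,x)$ lie in $B$ via the $\gamma_0^{(1-p)u_{n+1}}$ renormalization. You are more explicit than the paper on two points the paper leaves implicit --- the linear independence of the ${\mathcal G}^{(i)}(\hat\lambda,x)$ (which you extract from ${\mathbb M}({\mathcal G}(\Lambda,x))=I$) and the passage from an invariant subspace to a factor of the Fredholm determinant via complete continuity --- but these are precisely the details behind the paper's terse ``This implies the assertion of the proposition.''
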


\begin{proof}
Using the notation of (8.15), we have by (8.11)
\begin{equation}
\alpha^*\big({\mathcal G}(\Lambda,x)\big) = p{\mathcal G}(\Lambda){\mathcal G}(\Lambda,x).
\end{equation}
Iterating this gives for all $m\geq 0$
\begin{equation}
(\alpha^*)^m\big({\mathcal G}(\Lambda,x)\big) = p^m{\mathcal G}(\Lambda^{p^{m-1}}){\mathcal G}(\Lambda^{p^{m-2}})\cdots{\mathcal G}(\Lambda){\mathcal G}(\Lambda,x).
\end{equation}
Evaluate ${\mathcal G}(\Lambda,x)$ at $\Lambda = \hat{\lambda}$:
\[ {\mathcal G}(\hat{\lambda},x) = \big( {\mathcal G}^{(1)}(\hat{\lambda},x),\cdots,{\mathcal G}^{(M)}(\hat{\lambda},x)\big), \]
where by (8.16)
\begin{align*}
{\mathcal G}^{(i)}(\hat{\lambda},x) &= \sum_{u\in{\mathcal M}_-} {\mathcal G}_u^{(i)}(\hat{\lambda})\gamma_0^{u_{n+1}} x^u \\
 &= \sum_{u\in{\mathcal M}_-} \big(\gamma_0^{-(p-1)u_{n+1}}{\mathcal G}_u^{(i)}(\hat{\lambda})\big)\gamma_0^{pu_{n+1}}x^u.
\end{align*}
Since $\gamma_0^{-(p-1)u_{n+1}}\to 0$ as $u\to\infty$, this expression lies in $B$.  One checks that the specialization of the left-hand side of (10.11) with $m=a$ at $\Lambda = \hat{\lambda}$ is $\alpha_{\hat{\lambda}}^*({\mathcal G}(\hat{\lambda},x)\big)$, so specializing (10.11) with $m=a$ and $\Lambda = \hat{\lambda}$ gives
\begin{equation}
\alpha^*_{\hat{\lambda}}\big({\mathcal G}(\hat{\lambda},x)\big) = q{\mathcal G}(\hat{\lambda}^{p^{a-1}}){\mathcal G}(\hat{\lambda}^{p^{a-2}})\cdots{\mathcal G}(\hat{\lambda}){\mathcal G}(\hat{\lambda},x).
\end{equation}

We have proved that ${\mathcal G}(\hat{\lambda},x)$ is a vector of $M$ elements of $B$ and that the action of $\alpha^*_{\hat{\lambda}}$ on these $M$ elements is represented by the matrix
\[ q{\mathcal G}(\hat{\lambda}^{p^{a-1}}){\mathcal G}(\hat{\lambda}^{p^{a-2}})\cdots{\mathcal G}(\hat{\lambda}). \]
This implies the assertion of the proposition.
\end{proof}

\end{document}